\documentclass{amsart}
\usepackage{graphicx}
\usepackage[T1]{fontenc}
\usepackage{amsmath,amssymb,amsfonts,amsthm,stmaryrd, mathtools,mathrsfs}
\usepackage{hyperref}
\usepackage{multirow}

\newcommand{\mf}{\mathfrak}
\newcommand{\mc}{\mathcal}
\newcommand{\mb}{\mathbf}

\newcommand{\R}{\mathbf R}
\newcommand{\C}{\mathbf C}
\newcommand{\Q}{\mathbf Q}
\newcommand{\Z}{\mathbf Z}
\newcommand{\F}{\mathbf F}
\newcommand{\N}{\mathbf N}

\newcommand{\Fix}{\textnormal{Fix}}

\newcommand{\norm}[2]{N_{#1}^{#2}}
\newcommand{\cycl}[1]{\Q(\zeta_{#1})}

\numberwithin{equation}{section}
\theoremstyle{plain}
\newtheorem{theorem}{Theorem}

\newtheorem{proposition}[theorem]{Proposition}
\newtheorem{corollary}[theorem]{Corollary}
\theoremstyle{definition}
\newtheorem{definition}[theorem]{Definition}
\newtheorem{example}[theorem]{Example}

\newtheorem{remark}[theorem]{Remark}

\title[Generalized Chebyshev polynomials of the second kind]{Arithmetic exceptionality of generalized Chebyshev polynomials of the second kind}

\author{Derya Acar}
\address{Middle East Technical University, Department of Mathematics, Ankara, Turkey}
\email{adzhar.dariya@metu.edu.tr}
\author{Met\.{I}n Azmaz}
\address{Middle East Technical University, Department of Mathematics, Ankara, Turkey}
\curraddr{\textsc{Gazi University, Department of Mathematics, Ankara, Turkey}}
\email{azmaz.metin@metu.edu.tr}
\email{azmaz.metin@gazi.edu.tr}
\author{Vural Cam}
\address{Selçuk University, Department of Mathematics, Konya, Turkey}
\email{cvural@selcuk.edu.tr}
\author{Ömer Küçüksakallı}
\address{Middle East Technical University, Department of Mathematics, Ankara, Turkey}
\email{komer@metu.edu.tr}

\thanks{This study was supported by The Scientific and Technological Research Council of Türkiye (TÜBİTAK) under Project No. 124F146.}
\date{\today}

\begin{document}
\begin{abstract}
In this paper, we show that generalized Chebyshev polynomials of the second kind, associated with the root system  $A_2$, are not arithmetically exceptional. We achieve this by studying the norms of certain cyclotomic elements that admit a parametrization of finite fields.
\end{abstract}

\subjclass[2020]{11T06, 11T22}

\keywords{lie algebra, root system, fixed point, field norm}

\maketitle	

\section{Introduction}
Let $p$ be a prime, and let $\F_p$ denote the finite field with $p$ elements. A polynomial
$f\in \mathbf{Z}[\mathbf{x}]$ in $n$ variables is called \emph{arithmetically exceptional} if the induced map
\[f:\F_p^n\to \F_p^n\]
is a permutation for infinitely many primes $p$.

The univariate arithmetically exceptional polynomials are classified: they are precisely the compositions of linear polynomials and Dickson polynomials of the first kind $D_k(x,a)$~\cite{fried1970}. The Dickson polynomials of the first kind are uniquely determined by the functional equation
\begin{equation}\label{eq:dickson1functional}
	D_k\left(y+\frac{a}{y},a\right)=y^k+\frac{a^k}{y^k},
\end{equation}
and admit the explicit formula
\begin{equation}\label{eq:dickson1formula}
	D_k(x,a)
	=
	\sum_{j=0}^{\lfloor k/2\rfloor}
	\frac{k}{k-j}\binom{k-j}{j}
	(-a)^j x^{k-2j}.
\end{equation}
See \cite[(7.5)]{finitefieldsLidl1997} for details of this computation.

The Dickson polynomials of the first kind are closely related to the Chebyshev polynomials $T_k(x)$ of the first kind, defined by $T_k(\cos\theta)=\cos(k\theta)$. Indeed, equation~\eqref{eq:dickson1functional} implies that
\[
D_k(x,1)=2T_k(x/2).
\]

Let $q$ be a power of $p$, and let $\F_q$ denote the finite field with $q$ elements. It is well known that the polynomial $D_k(x,a)$, with $a\neq 0$, permutes the finite field $\F_q$ if and only if $\gcd(q^2-1,k)=1$. Using Dirichlet's theorem on primes in arithmetic progressions, one can deduce that Dickson polynomials of the first kind are arithmetically exceptional for certain values of $k$.

Another important family consists of the Dickson polynomials of the second kind, denoted by $E_k(x,a)$. These are uniquely defined by the functional equation
\begin{equation}\label{eq:dickson2functional}
	E_k\left(y+\frac{a}{y},a\right)
	=
	\frac{y^{k+1}-\dfrac{a^{k+1}}{y^{k+1}}}
	{y-\dfrac{a}{y}}.
\end{equation}
The polynomials $E_k(x,1)$ are closely related to the Chebyshev polynomials $U_k(x)$ of the second kind, defined by $U_k(\cos\theta) \sin\theta = \sin((k+1)\theta)$. Indeeed, equation \eqref{eq:dickson2functional} implies that
\[E_k(x,1)=U_k(x/2)\]

Furthermore, using the identity $D_k'(x,a)=kE_{k-1}(x,a)$, together with~\eqref{eq:dickson1formula}, one derives the explicit formula
\begin{equation*}\label{eq:dickson2formula}
	E_k(x,a)
	=
	\sum_{j=0}^{\lfloor k/2\rfloor}
	\binom{k-j}{j}
	(-a)^j x^{k-2j}.
\end{equation*}

Classifying the integers $k$ for which $E_k(x,1)$ permutes $\F_q$ remains an open problem. Several partial results are known. See \cite{HendersonMatthews, PanarioLima}, and the references therein.

On the other hand, the arithmetic exceptionality of $E_k(x,1)$ is completely understood. One may possibly use Fried's characterization \cite{fried1970} and show that $E_k(x,1)$ cannot be realized as a compositions of linear polynomials and Dickson polynomials of the first kind. However, this approach appears impractical. Instead, we rely on the following theorem.

\begin{theorem}[Cohen~\cite{cohen1994}, Matthews~\cite{matthews1982}]\label{thm:CohenMatt}
	Let $p$ be an odd prime. Then the Dickson polynomial $E_k(x,1)$ of the second kind permutes $\F_p$ if and only if $k$ satisfies the system of congruences
	\[
	\begin{cases}
		k+1\equiv \pm2 \pmod p,\\
		k+1\equiv \pm2 \pmod{\frac{p-1}{2}},\\
		k+1\equiv \pm2 \pmod{\frac{p+1}{2}}.
	\end{cases}
	\]
\end{theorem}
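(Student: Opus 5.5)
The plan is to study the value map of $E_k(x,1)$ on $\F_p$ through the functional equation \eqref{eq:dickson2functional} with $a=1$. Every $x\in\F_p$ can be written as $x=y+y^{-1}$ with $y$ in
\[
\mu_{p-1}=\F_p^\ast \quad\text{or}\quad \mu_{p+1}\subset\F_{p^2}^\ast,
\]
depending on whether $x^2-4$ is a square in $\F_p$. The two elements $y=\pm1$ give $x=\pm2$. For $y\neq\pm1$ we have
\[
E_k(x,1)=\frac{y^{k+1}-y^{-(k+1)}}{y-y^{-1}}.
\]
At $y=\pm1$ we use the explicit formula instead: $E_k(2,1)=k+1$ and $E_k(-2,1)=(-1)^k(k+1)$. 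So the values split into three families:
\begin{itemize}
\item the $(p-3)/2$ values coming from $y\in\mu_{p-1}\setminus\{\pm1\}$, taken up to $y\leftrightarrow y^{-1}$;
\item the $(p-1)/2$ values coming from $y\in\mu_{p+1}\setminus\{\pm1\}$, taken up to $y\leftrightarrow y^{-1}$;
\item the two values at $x=\pm2$.
\end{itemize}
Writing $m=k+1$, the first two families depend only on $m$ modulo $p-1$ and modulo $p+1$ respectively (after accounting for signs). The value at $x=\pm 2$ depends on $m \bmod p$. This already explains why the three moduli $p$, $(p-1)/2$, $(p+1)/2$ appear.

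\textbf{Sufficiency.} I would first check that each congruence makes the corresponding piece behave like the case $m=\pm2$, where $E_1(x,1)=x$ is the identity. If $m\equiv\pm2 \pmod{(p\mp1)/2}$, then $y^{m}=\pm y^{\pm2}$ on $\mu_{p\mp1}$, because $y^{(p\mp1)/2}=\pm1$. Hence $E_k$ restricted to that family is $x\mapsto \epsilon x$ for a sign $\epsilon$ determined by the sign character $y^{(p\mp1)/2}$ and by $(-1)^k$. Likewise $m\equiv\pm2\pmod p$ gives $E_k(\pm2)=\pm2$ up to sign. The remaining work is to check that these signs are compatible, so that the image of each family is a set of the form $\{x: x^2-4 \text{ square}\}$, $\{x: x^2-4\text{ nonsquare}\}$, or $\{\pm2\}$, each closed under $x\mapsto -x$. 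Since $x\mapsto -x$ preserves all three sets, any choice of signs gives a bijection. That handles sufficiency.

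\textbf{Necessity.} This is the main obstacle. I would follow Matthews' strategy of computing the cardinalities of fibres.
\begin{enumerate}
\item Because $E_k(x,1)$ is a nonzero rational function of $y$, the map being a permutation forces injectivity on each family. In particular the number of distinct values of $(y^m-y^{-m})/(y-y^{-1})$ on $\mu_{p-1}/\{y\sim y^{-1}\}$ must equal $(p-3)/2$.
\item Next I would use a counting or character-sum argument, or more concretely, pair elements. If $\gcd(m,p-1)>2$, there exist $y\neq z^{\pm1}$ in $\mu_{p-1}$ with $y^m=z^m$ and, after adjustment, the same value. This means finding a collision $y^m=z^m$, $y^{-m}=z^{-m}$ while $y-y^{-1}$ differs from $z-z^{-1}$ appropriately.
\item A cleaner route, which I would try first, is to compare the value sets directly. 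The images of the three families must partition $\F_p$, and the value $0$ is attained exactly when $y^{2m}=1$, $y^2\neq1$. So injectivity forces $\gcd(2m,p\mp1)$ to be small, namely $\gcd(m,(p\mp1)/2)=1$ and related conditions.
\item Finally, show that injectivity on $\mu_{p\mp1}$ pins $m$ down to $\pm2$ modulo $(p\mp1)/2$. Take $y^m$ as an invariant: distinct classes must have distinct values, and I would exhibit an explicit collision whenever $m\not\equiv\pm2$.
\item The condition modulo $p$ then follows from requiring that $E_k(\pm2,1)=\pm(k+1)$ avoid the images of the other two families, which forces $k+1\equiv\pm2\pmod p$.
\end{enumerate}

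I expect step 4, producing collisions for general $m$, to be the hardest part; it is essentially the content of Cohen's and Matthews' papers. In the paper we would simply cite \cite{cohen1994,matthews1982} for it.
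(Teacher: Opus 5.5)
The paper does not prove this theorem. It cites Matthews for sufficiency and Cohen for necessity, and its own Theorem~\ref{thm:A1main} only recovers non-permutation for $p>((k+1)/2)^2+1$. Your ``if'' direction is Matthews' argument through $x=y+y^{-1}$, $y\in\mu_{p-1}\cup\mu_{p+1}$. It contains a false step: ``since $x\mapsto -x$ preserves all three sets, any choice of signs gives a bijection.''

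Write $n$ for either of $\frac{p-1}{2},\frac{p+1}{2}$ and $k+1=2\delta+jn$ with $\delta=\pm1$. On the family coming from $\mu_{2n}$ you get $E_k(x,1)=\delta\,\epsilon(y)\,x$ with $\epsilon(y)=(y^{n})^{j}$. So the sign varies with $x$. Since $-x$ corresponds to $-y$, the map $x\mapsto\epsilon(x)x$ is a bijection of a negation-stable set only if $\epsilon(-x)=\epsilon(x)$.

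This can genuinely fail when a single congruence holds. For $p=7$, $k=4$ you have $k+1=5\equiv 2\pmod 3$, yet $E_4(1,1)=E_4(-1,1)=-1$, so the family $\{\pm1\}$ coming from $\mu_6$ is collapsed.

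The missing observation is that the full system forces $k+1$ to be even, because one of $\frac{p\pm1}{2}$ is even. Then if $n$ is odd, $j$ must be even and $\epsilon\equiv 1$. If $n$ is even, $(-y)^n=y^n$, so $\epsilon$ is even in $x$. The same parity gives $E_k(-2,1)=-E_k(2,1)$, which you also need to get $\{E_k(\pm2,1)\}=\{\pm2\}$ rather than a single point. With this added, sufficiency is correct.

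For necessity you do not actually give an argument. Step 3 is misstated. For $k$ odd and $p\nmid k+1$, the zero-fibre count yields $\gcd(k+1,\frac{p-1}{2})\cdot\gcd(k+1,\frac{p+1}{2})=2$ (Corollary~\ref{cor:A1invzero}). So one gcd equals $2$, not both $1$, and this condition is far weaker than the congruences.

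Step 4 is not just ``the hardest part'': together with the easy step 5 it \emph{is} the necessity direction, i.e.\ Cohen's theorem. You also state it family by family: injectivity on the classes from $\mu_{p\mp1}$ alone forces $k+1\equiv\pm2\pmod{\frac{p\mp1}{2}}$. That statement is at least as strong as the cited result, and you leave it unsupported.

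So your ``only if'' half amounts, like the paper's treatment, to citing Cohen, who proved necessity for $q=p$; Matthews proved sufficiency. It should be presented as a citation rather than as a proof outline.
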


The polynomial $E_1(x,1)=x$ is arithmetically exceptional, trivially. Now let $k\ge 2$ be fixed, and let $p$ be a prime satisfying $p>k+3$. Then
$k+1\not\equiv \pm2 \pmod p$. Hence the first congruence condition of the theorem fails, and therefore $E_k(x,1):\F_p\to\F_p$ is not a permutation whenever $p>k+3$. Consequently, for every $k\ge2$, the polynomial $E_k(x,1)$ is not arithmetically exceptional.

In this paper, we focus on families of polynomial mappings associated with semi-simple Lie algebras that generalize the classical families mentioned above. Indeed, the classical Chebyshev polynomials of the first kind can be realized as a special case of certain polynomial mappings $P_\mathfrak{g}^k:\C^n \rightarrow \C^n$, discovered by Veselov \cite{veselov1987}, and somewhat later by Hoffmann and Withers \cite{hoffmanwithers1988}, following the exponential invariants of Bourbaki \cite{bourbaki2002}. These polynomial mappings $P_\mathfrak{g}^k$ are called the \textit{generalized Chebyshev polynomials} and they have interesting properties. From their definition, they naturally commute
\[P_{\mf{g}}^k \circ P_{\mf{g}}^l = P_{\mf{g}}^l \circ P_{\mf{g}}^k\]
and it is believed that they exhaust all commuting polynomials under certain additional assumptions \cite{veselov-survey1991}. They are orthogonal with respect to a certain measure and can be extended to a complete set of orthogonal polynomials \cite{hoffmanwithers1988}. 

As a basic example, we may consider the family associated with the simple Lie algebra $\mathfrak{g}=A_1$. In this case, we recover the first kind classical polynomials as follows:
\[P_{A_1}^k(x)= D_k(x,1) = 2T_k(x/2).\]
	
The construction of Bourbaki \cite{bourbaki2002}, is very general, and it also allows us to write the classical Chebyshev polynomials of the second kind as maps suitably acting on the associated group ring of exponential invariants. Guided by the Weyl character formula, we consider the \textit{generalized Chebyshev polynomials of the second kind}, denoted $\Q_{\mf{g}}^k$, associated with the semi-simple Lie algebra $\mf{g}$.

In particular, if $\mathfrak{g}=A_1$, then we recover the second kind classical polynomials as follows:
\[Q_{A_1}^k(x)= E_k(x,1) = 2U_k(x/2).\]
To our knowledge, the polynomials $Q^k_{\mf{g}}$ with $\mf{g}\neq A_1$ have not been studied in the theory of finite fields before this work. See \cite[$\S$ 3.3]{hubertsinger2022} for an appearance in numerical analysis.

The polynomial families $P_{A_1}^k$ and $Q_{A_1}^k$ both satisfy the same recurrence relation with different initial conditions \cite{friedlidl1987}. More precisely, we have
\begin{equation}\label{eq:PQA1recursions}
	\begin{gathered}
		P^k_{A_1}=xP^{k-1}_{A_1}-P^{k-2}_{A_1},\text{ with }P^0_{A_1}=2,\ P^1_{A_1}=x,\\
		Q^k_{A_1}=xQ^{k-1}_{A_1}-Q^{k-2}_{A_1},\text{ with }Q^0_{A_1}=1,\ Q^1_{A_1}=x.
	\end{gathered}
\end{equation}
We emphasize that this small change in the initial conditions leads to a completely different behavior with respect to arithmetic exceptionality.

There is a rich literature about the polynomial mappings $P^k_{A_2}(x,y)$. For instance, see \cite{kuc07,lidlwells, uchimura}, and the references therein. We have $P^k_{A_2}=(g_k(y_1,y_2),g_k(y_2,y_1))$ and the polynomials $g_k$ satisfy the recurrence relation
\begin{equation}\label{eq:recrelPA2}
g_k = y_1g_{k-1}-y_2g_{k-2}+g_{k-3} \text{ for }k\ge 3.
\end{equation}
The first few examples of the polynomials $P^k_{A_2}$ are given below:
\begin{align*}
	P_{A_2}^0(y_1,y_2)&=(3,3),\\
	P_{A_2}^1(y_1,y_2)&=(y_1,y_2),\\
	P_{A_2}^2(y_1,y_2)&=(y_1^2-2y_2,y_2^2-2y_1),\\
	P_{A_2}^3(y_1,y_2)&=(y_1^3-3y_1y_2+3,y_2^3-3y_2y_1+3).
\end{align*}
The following result draws an analogy between the polynomial families $P^k_{A_1}$ and $P^k_{A_2}$ in terms of arithmetic exceptionality. 
\begin{theorem}[\cite{lidlwells}]\label{thm:PA2expceptional}
	The bivariate polynomial mappings $P^\ell_{A_2}$ are arithmetically exceptional for prime numbers $\ell \ge 11$.
\end{theorem}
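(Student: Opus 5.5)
The plan is to use the torus parametrization of $P^\ell_{A_2}$ and reduce the permutation question over $\F_p$ to a statement about $\ell$-th powers on certain finite abelian groups. Over an algebraic closure, write $y_1=z_1+z_2+z_3$ and $y_2=z_1z_2+z_1z_3+z_2z_3$ with $z_1z_2z_3=1$. Equivalently, $(y_1,y_2)$ encodes the characteristic polynomial
\[
T^3-y_1T^2+y_2T-1
\]
with roots $z_1,z_2,z_3$. With this convention, $P^\ell_{A_2}$ sends this polynomial to the one whose roots are $z_1^\ell,z_2^\ell,z_3^\ell$. The recurrence \eqref{eq:recrelPA2} is exactly the Newton-type recurrence for power sums $z_1^k+z_2^k+z_3^k$ of the roots of $T^3-y_1T^2+y_2T-1$, and the second coordinate is the power sum of the $z_i^{-1}$. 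So the first step is to confirm that $g_k(y_1,y_2)=\sum z_i^k$ and $g_k(y_2,y_1)=\sum z_i^{-k}$ from the initial values listed.

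The second step classifies points $(y_1,y_2)\in\F_p^2$ by the factorization type of their cubic over $\F_p$. The cubic either splits completely, factors as linear times irreducible quadratic, or is irreducible. Accordingly, the root multiset comes from one of three groups: $\F_p^*\times\F_p^*$ (with $z_3=(z_1z_2)^{-1}$), from $z\in\F_p^*$ together with $w\in\F_{p^2}^*$ satisfying $w^{p+1}=z^{-1}$, or from $z\in\F_{p^3}^*$ of norm $1$. In each case the multiset is determined up to the natural symmetric action, and $P^\ell_{A_2}$ acts by $z\mapsto z^\ell$ on these groups. The map $z\mapsto z^\ell$ preserves each type provided it is injective, since it commutes with Frobenius. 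The key observation is then that $P^\ell_{A_2}$ is a bijection on $\F_p^2$ if $z\mapsto z^\ell$ is injective on each of $\F_p^*$, $\F_{p^2}^*$ and the norm-one subgroup of $\F_{p^3}^*$. The orders of these groups are $p-1$, $p^2-1$ and $p^2+p+1$. Injectivity on the groups implies injectivity on orbits, because the action commutes with $S_3$ and with Frobenius. Repeated roots are handled the same way.

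The sufficient condition is therefore
\[
\gcd\bigl(\ell,(p^2-1)(p^2+p+1)\bigr)=1.
\]
It is enough to choose primes $p\equiv 2 \pmod \ell$. Then $p-1\equiv 1$, $p+1\equiv 3$ and $p^2+p+1\equiv 7 \pmod\ell$, and all of these are nonzero modulo $\ell$ once $\ell\ge 11$ is prime. This is where the hypothesis $\ell\ge 11$ (excluding $2,3,5,7$) enters. Dirichlet's theorem supplies infinitely many such $p$, so $P^\ell_{A_2}$ is arithmetically exceptional.

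The main obstacle is the bijection argument in the second step. Surjectivity and injectivity must be checked on unordered root multisets, including degenerate cases with repeated roots. I also need to verify that an $\ell$-th power map on the root group cannot merge two distinct Frobenius-stable multisets. I would handle this by noting that $z\mapsto z^\ell$ is an automorphism of the full group
\[
\{(z_1,z_2,z_3)\in(\overline{\F}_p^*)^3 : z_1z_2z_3=1\}
\]
restricted to the finite pieces. Being a finite injective map, it is bijective on each $\F_p$-rational type, and it is $S_3$- and Frobenius-equivariant, hence bijective on rational orbits.
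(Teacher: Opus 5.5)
Your proof is correct. The paper does not prove this statement itself. It cites Lidl--Wells, and after Theorem~\ref{thm:FixPtscorrespondence} it points to a different route: the bijection $\Fix(P^q_{\mf g})\to\F_q^n$, obtained by reducing the complex fixed points of $P^q_{\mf g}$ (which lie in $\Q(\zeta_{q^e-1})$) modulo a prime above $p$. From this bijection, exceptionality of $P^l_{\mf g}$ follows for every prime $l>e+1$, where $e$ is the exponent of $W$. For $A_2$ one has $W\cong S_3$ and $e=6$, which is the origin of the hypothesis $\ell\ge 11$.

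Your argument is the direct Lidl--Wells style one over $\overline{\F}_p$. You model points of $\F_p^2$ by root multisets of $T^3-y_1T^2+y_2T-1$, and reduce to the $\ell$-th power map on the three Frobenius-twisted tori of orders $(p-1)^2$, $p^2-1$, $p^2+p+1$. This gives a self-contained, elementary proof and a sharper criterion, $\gcd\bigl(\ell,(p^2-1)(p^2+p+1)\bigr)=1$, in which only the element orders $1,2,3$ of $S_3$ enter rather than all $i\le e=6$. The fixed-point approach instead gives uniformity over every semisimple $\mf g$, where the invariants are no longer the coefficients of a single characteristic polynomial, at the price of the cruder bound $l>e+1$.

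Two small corrections.

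First, the map $z\mapsto z^\ell$ is not an automorphism of $\{(z_1,z_2,z_3)\in(\overline{\F}_p^*)^3 : z_1z_2z_3=1\}$, since its kernel contains the $\ell$-torsion. What you need is injectivity on the finite pieces plus preservation of the factorization type, which you did argue. With your choice $p\equiv 2\pmod\ell$ there is a one-line way to close your \emph{main obstacle}. We have $p^6-1\equiv 63\pmod\ell$, which is nonzero for $\ell\ge 11$, so $z\mapsto z^\ell$ is injective on $\F_{p^6}^*$. That group contains every root of every such cubic. Hence $P^\ell_{A_2}$ is injective on root multisets, repeated roots and mixed types included, and therefore bijective on the finite set $\F_p^2$.

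Second, your parenthetical that the hypothesis excludes $5$ is inaccurate. For $\ell=5$ the residues $1,3,7$ are still nonzero. For $\ell=7$ the choice $p\equiv 3\pmod 7$ gives residues $2,4,6$. So your argument actually yields exceptionality for every prime $\ell\ge 5$. The bound $\ell\ge 11$ reflects the general criterion $l>e+1$, not an obstruction in the $A_2$ case.
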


In this paper, we focus on certain bivariate polynomial mappings. These polynomials are given by $Q^k_{A_2}=(h_k(y_1,y_2),h_k(y_2,y_1))$ where the polynomials $h_k$ satisfy the recurrence relation
\begin{equation*}\label{eq:recrelQA2}
	h_k = y_1h_{k-1}-y_2h_{k-2}+h_{k-3} \text{ for }k\ge 3.
\end{equation*}
We emphasize that the recurrence relation proved in Theorem~\ref{thm:A2recursion} coincides with the recurrence relation \eqref{eq:recrelPA2} satisfied by $P_{A_2}^k$. However, the initial conditions differ, leading to distinct families. In fact, the first few instances of the polynomials $Q^k_{A_2}$ are given as follows:
\begin{align*}
	Q_{A_2}^0(y_1,y_2)&=(1,1),\\
	Q_{A_2}^1(y_1,y_2)&=(y_1,y_2),\\
	Q_{A_2}^2(y_1,y_2)&=(y_1^2-y_2,y_2^2-y_1),\\
	Q_{A_2}^3(y_1,y_2)&=(y_1^3-2y_1y_2+1,y_2^3-2y_2y_1+1).
\end{align*}

In this paper, we prove that the bivariate polynomial mappings $Q^k_{A_2}$ are not exceptional for any integer $k \ge 2$. This result draws an analogy between the polynomial families $Q^k_{A_1}$ and $Q^k_{A_2}$ in terms of arithmetic exceptionality. 

The organization of the paper is as follows. In the second section we give the necessary background related to Lie algebras, explaining the polynomial families $P_{\mf{g}}^k$ and $Q_{\mf{g}}^k$. In the third section, we revisit Cohen's theorem and show that the polynomial $Q_{A_1}^k:\F_q \rightarrow \F_q$ does not induce a permutation for all but finitely many primes $p$. In the last section, we prove our main result about $Q_{A_2}^k$ by generalizing the algebraic methods introduced in the third section.

We hope that the ideas of this work can be further generalized to other second kind generalized Chebyshev polynomials associated to simple Lie algebras other than $A_1$ and $A_2$.

\section{Notation and Terminology}
In this section, we provide some terminology and notation that will be used throughout the paper. We mainly follow the content of Bourbaki~\cite[VI, $\S$3]{bourbaki2002} while recalling some notation from \cite{kuc10}.

Let $W$ be the Weyl group associated with a semi-simple Lie algebra $\mf{g}$ of rank $n$. Let $\{\alpha_1, \dots, \alpha_n\}$ be a set of simple roots. The Weyl group $W$ of $\mf{g}$ is a finite reflection group, and each simple root $\alpha_i$ induces a simple reflection in $W$. To each root $\alpha_i$, we associate a coroot
$$\alpha_i^\vee = \frac{2\alpha_i}{\left( \alpha_i, \alpha_i \right)}.$$
The fundamental weights $\{\omega_1, \dots, \omega_n\}$ are then defined by the condition:
\[ \left( \omega_i, \alpha_j^\vee \right) = \delta_{ij} \]
where $\delta_{ij}$ is the Kronecker delta function.

Let $\Lambda$ be the free abelian group generated by the fundamental weights. The elements of $\Lambda$ are called \textit{weights}. Consider the associated group algebra over a unique factorization domain $A$ denoted by $A[\Lambda]$. In this paper, it is sufficient to use $A=\Z$.  A typical element of $A[\Lambda]$ is of the form 
$$x=\sum_{\lambda \in \Lambda}x_\lambda e^\lambda$$ 
with coefficients $x_\lambda\in A$. An element $x \in A[\Lambda]$ is said to be \textit{invariant} under $W$ if
$w(x)= x$ for all $w \in W$. We denote by $A[\Lambda]^W$, the set of $W$-invariant elements of $A[\Lambda]$. 

The Weyl group is generated by reflections, and $\det (w)$ is $\pm 1$ for each element $w\in W$. The following notion is a key property to define the second kind Chebyshev polynomials. 

\begin{definition}
An element $x \in A[\Lambda]$ is said to be \textit{anti-invariant} under $W$ if
\[w(x)= \det (w)\cdot x\]
for all $w \in W$.
\end{definition}

For any $x \in A[\Lambda]$, define 
\[J(x)=\sum_{w \in W} \det (w) \cdot w(x).\]
A straightforward calculation shows that $J(x)$ is anti-invariant. 

There is a one-to-one correspondence between the invariant and anti-invariant submodules of $A[\Lambda]$. To exhibit this correspondence, we let $\rho=\omega_1+ \ldots + \omega_n \in \Lambda$. It turns out that for any $\alpha \in \Lambda$, there exists a $W$-invariant element $\beta \in A[\Lambda]^W$ such that $J(e^{\alpha})=J(e^\rho)\beta$. Moreover, the multiplication by $J(e^\rho)$ is a bijection from $A[\Lambda]^W$ to the set of anti-invariant elements of $A[\Lambda]$.

\begin{definition}
Let $x=\sum_{\lambda \in \Lambda}x_\lambda e^\lambda$ be an element of $A[\Lambda]$. We fix an ordering on weights defined by $ \mu \geq \lambda $ if and only if $\mu - \lambda$ is a nonnegative linear combination of simple roots $\alpha_i$. A term $x_\mu e^\mu$ is called a \textit{maximal term} of $x$ if $\mu \geq \lambda$ for all $\lambda$ with $x_\lambda \neq 0$. 
\end{definition}

\begin{example}
The element $J(e^\rho)$ has $e^\rho$ as its unique maximal term. 	
\end{example}

\begin{example}[$S$-type elements]
For any $\lambda\in\Lambda$, define
$$S(e^\lambda)=\sum_{\mu \in W(\lambda)} e^\mu.$$
Obviously, this is a $W$-invariant element. In particular, we set
$y_i=S(e^{\omega_i})$ for each $1\leq i \leq n$. Each element $y_i$ has $e^{\omega_i}$ as its unique maximal term.	
\end{example}

\begin{example}[$J$-type quotients]
Let $\chi_\lambda$ be the character of an irreducible representation of $\mf{g}$ with the highest weight $\lambda \in \Lambda$. The Weyl Character Formula \cite[$\S$10.4]{hall2015}, states that
\[
\chi_\lambda=J(e^{\rho+\lambda})/J(e^\rho).
\]	
In particular, we set
$z_i=J(e^{\rho+\omega_i})/J(e^\rho)$ for each $1\leq i \leq n$. Each element $z_i$ has $e^{\omega_i}$ as its unique maximal term.
\end{example}

Now we are ready to state the main result of Bourbaki~\cite{bourbaki2002}. This result enables us to introduce interesting families of polynomials, such as $P^k_{\mf{g}}$, and  $Q^k_{\mf{g}}$, generalizing the classical Chebyshev polynomials, $T_k(x)$ and $U_k(x)$, respectively. 

\begin{theorem}\cite[VI, \S3.4, Th.~1]{bourbaki2002}\label{thm:expformche}
Let $\omega_1, \ldots, \omega_n$ be the fundamental weights, and, for $1\leq i \leq n$, let $x_i$ be an element of $A[\Lambda]^W$ with $e^{\omega_i}$ as its unique maximal term. Let $$\varphi:A[X_1,\ldots,X_n] \rightarrow A[\Lambda]^W $$
be the homomorphism from the polynomial algebra $A[X_1,\ldots,X_n]$ to $A[\Lambda]^W$ that takes $X_i$ to $x_i$. Then, the map $\varphi$ is an isomorphism.
\end{theorem}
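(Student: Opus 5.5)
We prove Bourbaki's theorem (Theorem~\ref{thm:expformche}) in two steps: surjectivity by induction on dominant weights, and injectivity via maximal terms. The first reduction is that $A[\Lambda]^W$ is a free $A$-module with basis $\{S(e^\lambda)\}$, where $\lambda$ runs over the dominant weights $\lambda=\sum m_i\omega_i$, $m_i\ge 0$. This holds because every $W$-orbit in $\Lambda$ meets the dominant chamber in exactly one point. Hence an invariant element is determined by its coefficients at dominant weights. For a dominant weight $\lambda=\sum m_i\omega_i$, set
\[
x^{\lambda}=x_1^{m_1}\cdots x_n^{m_n}=\varphi\bigl(X_1^{m_1}\cdots X_n^{m_n}\bigr).
\]
Since each $x_i$ has $e^{\omega_i}$ as its unique maximal term, multiplicativity of maximal terms gives the following. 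The product $x^\lambda$ has $e^{\lambda}$ as its unique maximal term, with coefficient $1$, and every other weight $\mu$ occurring in it satisfies $\mu<\lambda$. This uses that $A$ is a domain and that the order is compatible with addition.

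\textbf{Surjectivity.} We show by induction that each $S(e^\lambda)$ lies in the image of $\varphi$. The element $S(e^\lambda)-x^\lambda$ is $W$-invariant, so it is an $A$-combination of $S(e^\mu)$ with $\mu$ dominant. Each such $\mu$ appears in $x^\lambda$ or in $S(e^\lambda)$ and is different from $\lambda$. Weights of $S(e^\lambda)$ are $\le\lambda$, since $\lambda-w\lambda$ is a nonnegative combination of simple roots when $\lambda$ is dominant. So every such $\mu$ satisfies $\mu<\lambda$. The key finiteness fact is that for a dominant $\lambda$ the set of dominant $\mu\le\lambda$ is finite. The reason is that $\mu$ lies in the convex hull of $W\lambda$, which is a bounded region intersected with a lattice. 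Induction on this finite poset then shows that every $S(e^\lambda)$ lies in the image, so $\varphi$ is surjective.

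\textbf{Injectivity.} Suppose $\varphi(F)=0$ for some nonzero polynomial $F=\sum c_m X^m$. Among the exponent vectors $m$ with $c_m\ne0$, the associated weights $\lambda_m=\sum m_i\omega_i$ are distinct, because the $\omega_i$ form a basis of $\Lambda$. Choose one that is maximal for the partial order. The coefficient of $e^{\lambda_m}$ in $\varphi(F)$ receives $c_m$ from $x^{\lambda_m}$. Any other monomial $x^{\lambda_{m'}}$ contributes to $e^{\lambda_m}$ only if $\lambda_m\le\lambda_{m'}$, and maximality of $\lambda_m$ forces $m'=m$. Hence this coefficient equals $c_m\ne0$, a contradiction.

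The main obstacle is the ordering bookkeeping: the partial order is not total, and termination of the induction needs the finiteness of dominant weights below a given one. I would first establish the standard fact that for dominant $\lambda$, every weight of $S(e^\lambda)$ equals $\lambda$ minus a nonnegative integer combination of simple roots. With that in hand, finiteness follows by bounding $(\mu,\mu)\le(\lambda,\lambda)$ for dominant $\mu\le\lambda$.
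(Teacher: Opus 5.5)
Your proof is correct and is essentially the argument in Bourbaki that the paper cites without reproducing. The monomials $x^\lambda$ are unitriangular with respect to the orbit-sum basis $\{S(e^\lambda)\}$, and the finiteness of dominant weights below a given $\lambda$ (your bound $(\mu,\mu)\le(\lambda,\lambda)$) lets the triangular induction terminate, so the $x^\lambda$ form an $A$-basis of $A[\Lambda]^W$; the appeal to $A$ being a domain is superfluous, since all leading coefficients equal $1$.
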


This theorem has two fundamental consequences, namely the generalized Chebyshev polynomials of the first kind and the second kind. We now give these constructions focusing on the cases associated with $\mf{g}=A_1$, and $\mf{g}=A_2$. 

\subsection{First Kind Generalized Chebyshev Polynomials} These polynomials $P_\mf{g}^k$ were first studied by \cite{hoffmanwithers1988, veselov1987}. With each semi-simple complex Lie algebra $\mf{g}$ of rank $n$, there is an associated infinite sequence of polynomial mappings $P_\mf{g}^k, ~k \in {\N}$ determined from the conditions
\[ \mathbf{y}(k\gamma)=P_\mf{g}^k(\mathbf{y}(\gamma)) \]
where $\mathbf{y}=(y_1,\ldots,y_n)$ is the generalized cosine function. Recall that $y_i=S(e^{\omega_i})$. All coefficients of the polynomials defining $P_\mf{g}^k$ are integers. The formal exponential sums can be considered as complex-valued functions by putting
$$e^\lambda(\gamma) \mapsto e^{-2\pi i(\lambda, \gamma)}$$
as in \cite[Lemma~4.1]{hoffmanwithers1988}.

\begin{example}[$\mf{g}=A_1$]	
In this case, there is a single fundamental weight $\omega=\omega_1$. The Weyl group is given by $W=\left\{\text{id},\sigma\right\}$ where $\sigma(\omega)=-\omega$. We observe that
$$
S(e^{k\omega})=e^{k\omega}+e^{-k\omega},
$$
and this expression can be written as a polynomial of $y_1 = e^\omega+e^{-\omega}$. In other words, $P^k_{A_1}$ is the polynomial that satisfies the functional equation
\begin{equation}\label{eq:PA1functionalformal}
	P^k_{A_1}(e^\omega+e^{-\omega})=e^{k\omega}+e^{-k\omega}.
\end{equation}
We note the similarity between this equation and the equation  (\ref{eq:dickson1functional}) defining the Dickson polynomials of the first kind. We have $P_{A_1}^k(x) = D_k(x,1)$. The first few examples of these polynomials are given by
\begin{align*}
	P^0_{A_1}(x)&=2,\\
	P^1_{A_1}(x)&=x,\\
	P^2_{A_1}(x)&=x^2-2,\\
	P^3_{A_1}(x)&=x^3-3x,\\
	P^4_{A_1}(x)&=x^4-4x^2+2.
\end{align*}
\end{example}

\begin{example}[$\mf{g}=A_2$]\label{ex:PA2}
In this case, the role of the Weyl group is more apparent since there are more symmetries compared to the $A_1$ case. There is some literature about the polynomial mappings $P^k_{A_2}$. See \cite{kuc07,lidlwells, uchimura}, and the references therein for more details. 

Let $W$ be the Weyl group for the Lie algebra $A_2$. The associated Cartan matrix $C$ is defined by $[ (\alpha_i , \alpha_j^\vee )]$. See \cite{humphreys-lie1978} for a complete table of Cartan matrices for simple Lie algebras. For the Lie algebra $A_2$, we have
$$
C=\begin{bmatrix}2&-1\\-1&2\end{bmatrix}.
$$
The Cartan matrix transforms fundamental weights into simple roots. In this case, there are two fundamental weights, namely $\omega_1$, and $\omega_2$. We have
\begin{align*}
	\alpha_1 &= 2\omega_1 - \omega_2,\\
	\alpha_2 &= -\omega_1 +2\omega_2.
\end{align*}
The Weyl group has six elements, and each orbit of a fundamental weight has three elements. We have
\begin{equation}\label{eq:y_1andy_2}
	y_1 =e^{\omega_1}+e^{-\omega_1+\omega_2}+e^{-\omega_2}, 
	\quad y_2 =e^{-\omega_1}+e^{\omega_1-\omega_2}+e^{\omega_2}.
\end{equation}
Recall that $\mathbf{y}=(y_1,y_2)$ is the generalized cosine function associated with $\mf{g}=A_2$. We have $\mathbf{y}(k\gamma)=P_\mf{g}^k(\mathbf{y}(\gamma))$. Let us focus on the case $k=2$. A brief computation reveals that	$$P_{A_2}^2(y_1,y_2)=(y_1^2-2y_2,y_2^2-2y_1).$$
As we have mentioned in the introduction, $P^k_{A_2}=(g_k(y_1,y_2),g_k(y_2,y_1))$ and the polynomials $g_k$ satisfy the recurrence relation \eqref{eq:recrelPA2}. 
This relation may be established by examining the maximal terms of invariant elements having a minimal number of terms. The recurrence satisfied by $P_{A_2}^k$ is an instance of a much broader and richer theory of polynomial sequences defined recursively. We refer the reader to Withers \cite{withers1988} for further details.
\end{example}

\subsection{Second Kind Generalized Chebyshev Polynomials}
Let $\mf{g}$ be an arbitrary semi-simple Lie algebra. For each fundamental weight $\omega_i$, we set $z_i=\chi_{\omega_i}=J(e^{\rho+\omega_i})/J(e^\rho)$. Then, by Theorem~\ref{thm:expformche}, $\chi_{k\omega_i}=J(e^{\rho+k\omega_i})/J(e^\rho)$ can be written as a polynomial of $z_i$'s. The generalized Chebyshev polynomials $Q^k_{\mf{g}}$ of the second kind are the polynomials that satisfy the functional equation
$$Q^k_{\mf{g}}(z_1,\dots,z_n) = \left(\chi_{k\omega_1}, \dots, \chi_{k\omega_n}\right).$$
To the best of our knowledge, the polynomials $Q^k_{\mathfrak g}$, for $\mathfrak g \neq A_1$, have not previously been studied in the context of finite fields. We refer the reader to \cite{hubertsinger2022}, and the references therein for applications of these polynomials in numerical analysis.

\begin{example}[$\mf{g}=A_1$]
In this case, there is a single fundamental weight $\omega=\omega_1$. The Weyl group is given by $W=\left\{\text{id},\sigma\right\}$ where $\sigma(\omega)=-\omega$. We have
$$ \chi_{k\omega} = \frac{J(e^{\omega+k\omega})}{J(e^\omega)} =\frac{e^{(k+1)\omega}-e^{-(k+1)\omega}}{e^\omega-e^{-\omega}}$$
which can be written as a polynomial of
$$
z=\chi_{\omega}=\frac{J(e^{\omega+\omega})}{J(e^\omega)}=\frac{e^{2\omega}-e^{-2\omega}}{e^\omega-e^{-\omega}}=e^\omega+e^{-\omega}.
$$
In other words, $Q^k_{A_1}$ is the polynomial that satisfies the functional equation
\begin{equation}\label{eq:QA1functionalformal}
	Q^k_{A_1}(e^\omega+e^{-\omega})=\frac{e^{(k+1)\omega}-e^{-(k+1)\omega}}{e^\omega-e^{-\omega}}.
\end{equation}
We note the similarity between this equation and the equation  (\ref{eq:dickson2functional}) defining the Dickson polynomials of the second kind. We have $Q_{A_1}^k(x) = E_k(x,1)$. The first few examples of these polynomials are given by
\begin{align*}
	Q^0_{A_1}(x)&=1,\\
	Q^1_{A_1}(x)&=x,\\
	Q^2_{A_1}(x)&=x^2-1,\\
	Q^3_{A_1}(x)&=x^3-2x,\\
	Q^4_{A_1}(x)&=x^4-3x^2+1.
\end{align*}
\end{example}

\begin{example}[$\mf{g}=A_2$]\label{ex:QA2}
Our main result is about the arithmetic exceptionality of the generalized Chebyshev polynomials $Q^k_{A_2}$ of the second kind associated with the Lie algebra $A_2$. 

Recall that $\chi_\lambda$ is the character of an irreducible representation of $A_2$ with highest weight $\lambda$. The polynomial $Q^k_{A_2}$ is defined as the unique bivariate polynomial with integer coefficients that satisfies the following function equation
$$Q^k_{A_2}(z_1,z_2)=(\chi_{k\omega_1},\chi_{k\omega_2}),$$
where $z_1=\chi_{\omega_1}$ and $z_2=\chi_{\omega_2}$. We claim that $z_1=y_1$ and $z_2=y_2$. We have already computed $y_i=S(e^{\omega_i})$ as in (\ref{eq:y_1andy_2}). Now we need to compute $J(e^{\omega_i+\rho})$ for $i=1,2,$ and realize them as a multiple of $J(e^{\rho})$. For this purpose, we may use the following general form of a $J$-type element:
\begin{align}\label{eq:JforA2}
\begin{split}
J(e^{a\omega_1+b\omega_2})=&e^{(-a-b)\omega_1+a\omega_2}+e^{b\omega_1+(-a-b)\omega_2}+e^{a\omega_1+b\omega_2}\\
&-\left(e^{(a+b)\omega_1-b\omega_2}+e^{-a\omega_1+(a+b)\omega_2}+e^{-b\omega_1-a\omega_2}\right).	
\end{split}
\end{align}
Using this formula, it is straightforward to verify that 
$$z_i=\frac{J(e^{\omega_i+\rho})}{J(e^{\rho})} = S(e^{\omega_i})=y_i.$$
We will use the variables $y_i$, as in (\ref{eq:y_1andy_2}), in the rest of the paper, instead of $z_i$. We remark that for other Lie algebras such as $B_2$, and $G_2$, the variables $z_i$ and $y_i$ are not the same for each $i$. 

The components of the pairs $(S(e^{k\omega_1}),S(e^{k\omega_2}))$ and $(\chi_{k\omega_1}, \chi_{k\omega_2})$ are invariant under the action of the Weyl group $W$. Even though they agree when $k=1$, they are distinct for other values of $k$. As a result, $P_{A_2}$ and $Q_{A_2}$ are different families of bivariate polynomial mappings. The first few examples of the polynomials $Q^k_{A_2}$ are as follows:
\begin{align*}
	Q_{A_2}^0(y_1,y_2)&=(1,1),\\
	Q_{A_2}^1(y_1,y_2)&=(y_1,y_2),\\
	Q_{A_2}^2(y_1,y_2)&=(y_1^2-y_2,y_2^2-y_1),\\
	Q_{A_2}^3(y_1,y_2)&=(y_1^3-2y_1y_2+1,y_2^3-2y_2y_1+1),\\
	Q_{A_2}^4(y_1,y_2)&=(y_1^4-3y_1^2y_2+y_2^2+2y_1 , y_2^4-3y_2^2y_1+y_1^2+2y_2).
\end{align*}
We observe that the components of $Q_{A_2}^k$ are symmetric under the change of variables $y_1$ with $y_2$. This is true in general, and it is due to the symmetric nature of $A_2$. To ease the notation in further parts, we write
\begin{equation}\label{eq:QA2symmetry}
Q_{A_2}^k(y_1,y_2)=(h_k(y_1,y_2),h_k(y_2,y_1))
\end{equation}
for some polynomial $h_k(y_1,y_2) \in \Z[\mb{y}]$. It turns out that the polynomials $h_k$ satisfy the recursive relation \eqref{eq:recrelPA2}. More precisely, we have the following theorem. 
\begin{theorem}\label{thm:A2recursion}
We have $h_0=1$, $h_1=y_1$, and $h_2=y_1^2-y_2$. Moreover
$$
h_k=y_1 h_{k-1}-y_2 h_{k-2}+h_{k-3},\text{ for }k\ge 3.
$$
\end{theorem}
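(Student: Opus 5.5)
We prove the recurrence by working with the anti-invariant numerators $J(e^{\rho+k\omega_1})$ rather than with the characters themselves. By definition, $h_k$ is the polynomial with $h_k(y_1,y_2)=\chi_{k\omega_1}=J(e^{\rho+k\omega_1})/J(e^\rho)$. By Theorem~\ref{thm:expformche} with $x_i=y_i$, an element of $A[\Lambda]^W$ determines its polynomial expression uniquely. It therefore suffices to prove the identity
\[
J(e^{\rho+k\omega_1})=y_1\,J(e^{\rho+(k-1)\omega_1})-y_2\,J(e^{\rho+(k-2)\omega_1})+J(e^{\rho+(k-3)\omega_1})
\]
in $\Z[\Lambda]$ for $k\ge 3$. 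Dividing by $J(e^\rho)$ then gives the recurrence.

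The initial values come first. Clearly $h_0=\chi_0=1$, and $h_1=y_1$ was already verified above via \eqref{eq:JforA2}. For $h_2$ we expand $y_1^2-y_2$ and compare it with $J(e^{\rho+2\omega_1})/J(e^\rho)$. Equivalently, we check that $J(e^\rho)(y_1^2-y_2)$ equals $J(e^{3\omega_1+\omega_2})$ using \eqref{eq:JforA2}. This is a finite calculation.

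For the recurrence, the key observation is the classical identity
\[
(X-e^{\omega_1})(X-e^{-\omega_1+\omega_2})(X-e^{-\omega_2})=X^3-y_1X^2+y_2X-1.
\]
To see it, write $t_1=e^{\omega_1}$, $t_2=e^{-\omega_1+\omega_2}$, $t_3=e^{-\omega_2}$. Then $t_1t_2t_3=1$, $t_1+t_2+t_3=y_1$, and $t_1t_2+t_1t_3+t_2t_3=e^{\omega_2}+e^{\omega_1-\omega_2}+e^{-\omega_1}=y_2$. Hence each $t_j$ satisfies $t_j^{m}=y_1t_j^{m-1}-y_2t_j^{m-2}+t_j^{m-3}$.

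Next we write the six terms of $J(e^{\rho+k\omega_1})=J(e^{(k+1)\omega_1+\omega_2})$ from \eqref{eq:JforA2} in the form $\pm\, c_w\, t_{j(w)}^{k}$. Here $w$ runs over the Weyl group, $c_w$ is a monomial independent of $k$, and $t_{j(w)}$ is one of $t_1,t_2,t_3$. This works because $w(\rho+k\omega_1)=w(\rho)+k\,w(\omega_1)$, and $w(\omega_1)$ runs over the orbit $\{\omega_1,-\omega_1+\omega_2,-\omega_2\}$. Multiplying the power identity for $t_{j(w)}$ by $\pm c_w$ and summing over $w$ gives the desired identity term by term. The coefficients $y_1,y_2,1$ are $W$-invariant and do not depend on $w$, so they factor out of the sum.

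The main obstacle is bookkeeping. For $k-3<0$, the element $J(e^{\rho+(k-3)\omega_1})$ must still be read with the formula \eqref{eq:JforA2}, which holds for all integers $a,b$. One must also check that the term-by-term decomposition is valid for every $k\ge 3$. Since $J(e^{\rho+m\omega_1})=\sum_w\det(w)\,e^{w(\rho)}\,e^{m\,w(\omega_1)}$ holds for every integer $m$, the identity is purely formal. For instance, at $k=3$ the last term is $J(e^{\rho})$, giving $h_0=1$, and no special care is needed.
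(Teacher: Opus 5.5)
Your proof is correct, and it reaches the key alternant identity \eqref{eq:recrelA2proof} by a different mechanism than the paper.

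\textbf{The paper's route.} It expands products of orbit sums with alternants,
\[
S(e^{\omega_1})J(e^{k\omega_1+\omega_2})=\sum_{\nu\in W\omega_1}J(e^{k\omega_1+\omega_2+\nu}),
\]
and does the same for $S(e^{\omega_2})J(e^{(k-1)\omega_1+\omega_2})$. It then subtracts, cancels the common term $J(e^{(k-1)\omega_1+2\omega_2})$, and discards the alternants of weights lying on a wall. In effect this is a Pieri or Brauer--Klimyk decomposition, tracked through maximal terms.

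\textbf{Your route.} You observe that the three orbit exponentials $t_1,t_2,t_3$ are roots of the single cubic $X^3-y_1X^2+y_2X-1$, whose coefficients are $W$-invariant. You then apply the resulting power recurrence term by term to
\[
J(e^{\rho+m\omega_1})=\sum_w\det(w)\,e^{w\rho}\,t_{j(w)}^m.
\]
This is the classical proof of the recurrence for complete homogeneous symmetric functions.

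\textbf{What your route buys.}
\begin{itemize}
\item There is no cancellation bookkeeping, and the identity holds for every integer $k$, since the $t_j$ are units.
\item It extends unchanged to any orbit $W\omega_i$ in any rank, giving a recurrence of order $|W\omega_i|$ with $W$-invariant coefficients.
\item It explains why $P^k_{A_2}$ obeys the same recurrence \eqref{eq:recrelPA2}: the power sums $S(e^{m\omega_1})=t_1^m+t_2^m+t_3^m$ satisfy it too.
\item It makes your separate finite check of $h_2$ unnecessary. Run the identity at $k=1$ and $k=2$, using $J(e^{\rho-\omega_1})=J(e^{\omega_2})=0$ and $J(e^{\rho-2\omega_1})=J(e^{-\omega_1+\omega_2})=0$. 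This gives $J(e^{\rho+\omega_1})=y_1J(e^\rho)$ and $J(e^{\rho+2\omega_1})=(y_1^2-y_2)J(e^\rho)$ directly.
\end{itemize}
For the same reason, your caveat about $k-3<0$ is moot for $k\ge 3$. It is precisely the negative-index case that gives you the initial values for free.

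\textbf{What the paper's route buys.} It exposes the representation-theoretic content, namely
\[
\chi_{\omega_1}\chi_{(k-1)\omega_1}=\chi_{k\omega_1}+\chi_{(k-2)\omega_1+\omega_2}.
\]
It also stays within the maximal-term framework the paper uses elsewhere.
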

\begin{proof}
There are various formulas that convert products of trigonometric functions into sums. For instance, we have $$2\sin(\alpha)2\cos(\beta)=2\sin(\alpha+\beta)+2\sin(\alpha-\beta).$$
Substituting $\alpha=k\theta$ and $\beta=\theta$ into this equation and dividing through by $2\sin(\theta)$, we obtain, after rearranging the terms, the following relation
$$\frac{\sin((k+1)\theta)}{\sin(\theta)} = \cos(\theta)\frac{\sin(k\theta)}{\sin(\theta)} -
\frac{\sin((k-1)\theta)}{\sin(\theta)}. $$
Note that the expressions $2\sin(m\theta)$, with $m=k+1,k,k-1$, are closely related with the formal exponential sums $J(e^{m\omega}) = e^{m\omega}-e^{-m\omega}$. Thus, the above equation proves the recurrence relation for $Q_{A_1}^k$.

We want to generalize this idea to the family $Q_{A_2}^k$. In the $A_2$ case, the $J$-type elements can be computed by (\ref{eq:JforA2}). Note that each such element consists of six exponential terms. However, there may be some cancellations. For instance, if $a=0$ or $b=0$, then $J(e^{a\omega_1+b\omega_2})=0$. For $k\ge 3$, we claim that 
\begin{align}\label{eq:recrelA2proof}
\begin{split}
J(e^{(k+1)\omega_1+\omega_2}) = 
&\ S(e^{\omega_1})J(e^{k\omega_1+\omega_2}) \\
&-S(e^{\omega_2})J(e^{(k-1)\omega_1+\omega_2})\\
&+J(e^{(k-2)\omega_1+\omega_2}).	
\end{split}
\end{align}
This equality can be directly verified by repeatedly applying (\ref{eq:JforA2}), and following the straightforward computations in the group ring $\Z[\Lambda]$. It is easier to perform these computations if one focuses on the maximal terms. For instance $J(e^{(k+1)\omega_1+\omega_2})$ has a unique maximal term $e^{(k+1)\omega_1+\omega_2}$. Such a term occurs on the right-hand side only within the product $S(e^{\omega_1})J(e^{k\omega_1+\omega_2})$, which has $18$ terms. Indeed we have
$$S(e^{\omega_1})J(e^{k\omega_1+\omega_2}) = J(e^{(k+1)\omega_1+\omega_2}) + J(e^{(k-1)\omega_1+2\omega_2}) + J(e^{(k+1)\omega_1}).$$
We observe that the last term on the right-hand side is zero since the coefficient of $\omega_2$ is zero. We also have
$$S(e^{\omega_2})J(e^{(k-1)\omega_1+\omega_2}) = J(e^{(k-1)\omega_1+2\omega_2}) +  J(e^{(k-2)\omega_1+\omega_2})+J(e^{k\omega_1}).$$
Again, the last term on the right-hand side is zero since the coefficient of $\omega_2$ is zero. Using these equalities, we can deduce (\ref{eq:recrelA2proof}). Finally, if we divide each term by $J(e^\rho)=J(e^{\omega_1+\omega_2})$, then we obtain the desired recurrence relation for $h_k$. 
\end{proof}
\end{example}

\subsection{A one-to-one correspondence}
While studying the maps $\F_q^n \rightarrow \F_q^n$, one can use a parametrization of $\F_q^n$ obtained by reducing certain algebraic numbers in a suitable number field modulo a prime ideal of that number field. For this purpose, we recall the main ideas in \cite{kuc10}. The central idea of that paper is the construction of a one-to-one correspondence
\[\F_q^n \leftrightarrow \operatorname{Fix}(P_g^q),\]
between the vector space $\F_q^n$ and the set of complex fixed points of the generalized Chebyshev map $P_\mf{g}^q$. The idea is to prove that there are exactly $q^n$ fixed points of $P_\mf{g}^q$, distinct fixed points remain distinct modulo $p$, and every reduced fixed point lies in $\F_q^n$. This bijection is a key mechanism allowing the dynamics over $\C$ to control permutation behavior over finite fields. Below, we combine several results of \cite{kuc10} into a single statement.
\begin{theorem}\label{thm:FixPtscorrespondence}
Let $\mf{g}$ be a semi-simple Lie algebra of rank $n$, $q$ be a power of a prime $p>n$, $e$ be the exponent of the Weyl group associated with $\mf{g}$, and $\Fix(P_{\mf{g}}^q)$ denote the fixed points of the generalized Chebyshev polynomial $P_{\mf{g}}^q$ of the first kind. Then the number field $\Q(\Fix(P_{\mf{g}}^q))$ is contained in the cyclotomic field $\Q(\zeta_{q^e-1})$. Moreover, for a prime ideal $\mf{p}$ of $\Q(\zeta_{q^e-1})$ lying over $p$, the following reduction map is a bijection:
\begin{align*}
	\Fix(P_{\mf{g}}^q)&\rightarrow \F_q^n\\
	\alpha&\longmapsto \alpha\pmod{\mf{p}}.
\end{align*}
\end{theorem}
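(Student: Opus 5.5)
The plan is to reprove the three ingredients of the statement from \cite{kuc10}: the fixed points of $P_{\mf{g}}^q$ are cyclotomic, there are exactly $q^n$ of them, and reduction modulo $\mf{p}$ is injective with image in $\F_q^n$. A bijection then follows by counting.

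\emph{Step 1: describing the fixed points.} Write $\mathbf{y}(\gamma)$ for the generalized cosine, with $\gamma$ in the Cartan subalgebra modulo the coroot lattice, i.e.\ in a torus $T$. By the defining property $P_{\mf{g}}^q(\mathbf{y}(\gamma))=\mathbf{y}(q\gamma)$, together with the fact that $\mathbf{y}$ separates $W$-orbits, $\mathbf{y}(\gamma)$ is a fixed point if and only if $q\gamma=w\gamma$ in $T$ for some $w\in W$. This condition means $(q-w)\gamma\in L$, the coroot lattice.

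Each such $\gamma$ is torsion, so the values $e^{-2\pi i(\lambda,\gamma)}$ are roots of unity. Their order divides $|\det(q-w)|$. Since $w^e=1$, we have $(q^e-1)=(q-w)(q^{e-1}+q^{e-2}w+\cdots+w^{e-1})$. Hence $(q^e-1)\gamma\in L$, and all coordinates $y_i(\gamma)$ lie in $\Q(\zeta_{q^e-1})$. This proves the first claim.

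\emph{Step 2: counting.} I would count the $W$-orbits of solutions $\gamma$ of $q\gamma\equiv w\gamma$ and aim to show the total is exactly $q^n$. A cleaner route is algebraic: $P_{\mf{g}}^q$ has a homogeneous top-degree part with no nontrivial common zero, so by B\'ezout the system $P^q(\mathbf{x})=\mathbf{x}$ has $q^n$ solutions counted with multiplicity. I would then show these solutions are simple, using the Jacobian of $P^q$ at a fixed point, which relates to $\det(q-w)$ on the regular part. This simplicity argument, including the points lying on walls, is where I expect the \textbf{main obstacle}, and I would follow the argument in \cite{kuc10} here.

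\emph{Step 3: reduction and bijectivity.} Reduce modulo $\mf{p}$. The reduction of $P_{\mf{g}}^q$ modulo $p$ is congruent to the Frobenius map $\mathbf{x}\mapsto\mathbf{x}^q$. This holds because $S(e^{\omega_i})^q\equiv S(e^{q\omega_i})\pmod p$ in $\Z[\Lambda]$, and each $S(e^{\omega_i})$ is a sum of monomials. Consequently each reduced fixed point $\bar\alpha$ satisfies $\bar\alpha^q=\bar\alpha$, so $\bar\alpha\in\F_q^n$.

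For injectivity, I would take the product of the differences between coordinates of distinct fixed points, or use a discriminant, and show it is a $\mf{p}$-unit. Since the fixed points are expressed through roots of unity of order prime to $p$, distinct torsion points of order prime to $p$ remain distinct mod $\mf{p}$. The hypothesis $p>n$ is needed so that the map from the torus to $\mathbf{y}$-values stays injective on $W$-orbits mod $p$; this uses that $|W|$-related denominators, such as the Weyl denominator, do not vanish.

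Injectivity combined with the count $q^n=|\F_q^n|$ gives the bijection.
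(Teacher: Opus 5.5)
Your outline matches what the paper says about this theorem. The paper gives no proof, only citing \cite{kuc10} for cyclotomicity, exactly $q^n$ fixed points, injectivity mod $\mf{p}$, and image in $\F_q^n$. Step 1 and the Frobenius congruence in Step 3 are fine.

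The genuine gap is Step 2, and your final sentence depends on it. You only get $q^n$ fixed points counted with multiplicity; the simplicity needed to make them $q^n$ distinct points is deferred to \cite{kuc10}. Your Jacobian argument ($dP_{\mf g}^q$ conjugate to $qw^{-1}$ through $d\mathbf y$) works only at regular $\gamma$, where $d\mathbf y$ is invertible. At fixed points $\mathbf y(\gamma)$ with $\gamma$ on a wall it says nothing.

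Even the multiplicity count needs a weighted grading. In ordinary degree a component of $P_{\mf g}^q$ can contain monomials of degree $>q$. For example, $y_2^2=S(e^{2\omega_2})+2S(e^{3\omega_1})+2y_2+6$ for $G_2$, so the second component of $P^2_{G_2}$ contains $-2y_1^3$. B\'ezout must therefore be applied with $\deg y_i=(\omega_i,\rho^\vee)$; then the leading form of the $i$-th component of $P^q_{\mf g}(\mathbf x)-\mathbf x$ is exactly $x_i^q$.

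The simplest repair is to drop the counting and prove surjectivity by lifting.
\begin{itemize}
\item[(i)] Take $\bar{\mathbf a}\in\F_q^n$. Since $\overline{\F}_p[\Lambda]$ is integral over $\overline{\F}_p[\Lambda]^W=\overline{\F}_p[\bar y_1,\dots,\bar y_n]$ (Theorem~\ref{thm:expformche} over $A=\overline{\F}_p$), there is $\bar t\in\mathrm{Hom}(\Lambda,\overline{\F}_p^\times)$ with $\bar{\mathbf y}(\bar t)=\bar{\mathbf a}$.
\item[(ii)] Writing $\bar t^q$ for $\lambda\mapsto\bar t(\lambda)^q$, we have $\bar{\mathbf y}(\bar t^q)=\bar{\mathbf y}(\bar t)^q=\bar{\mathbf a}$. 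Hence $\bar t^q=w\bar t$ for some $w\in W$, so $\bar t^{q^e}=\bar t$ and $\bar t$ takes values in $\F_{q^e}^\times$.
\item[(iii)] Lift these values to $(q^e-1)$-th roots of unity in $\Q(\zeta_{q^e-1})$. Reduction mod $\mf p$ is injective on these roots and $W$-equivariant, so the lift $t$ satisfies $t^q=wt$. Thus $\mathbf y(t)\in\Fix(P^q_{\mf g})$, and it reduces to $\bar{\mathbf a}$.
\end{itemize}
Combined with injectivity this gives the bijection, and $|\Fix(P^q_{\mf g})|=q^n$ becomes a consequence rather than an input. Alternatively, $\Z[\mathbf x]/(P^q_i(\mathbf x)-x_i)$ is free of rank $q^n$ and reduces mod $p$ to the \'etale algebra $\F_p[\mathbf x]/(x_i^q-x_i)$. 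So its discriminant is prime to $p$, and all complex fixed points are simple.

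Your explanation of where $p>n$ enters is also wrong. The orbit separation you need for injectivity holds in every characteristic. If $f$ vanishes on $W\bar t$ and equals $1$ on $W\bar t'$, then $\prod_{w\in W}w\cdot f$ is a $W$-invariant separating the two orbits, and the invariants are generated by the $\bar y_i$. No Weyl denominator is involved. Moreover, $p>n$ would not give $p\nmid|W|$ anyway: take $A_2$ with $p=3$. The argument above does not use the hypothesis $p>n$ at all.
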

This one-to-one correspondence implies that the multivariate polynomial $P_{\mathfrak g}^l$ in $n$ variables is exceptional for every prime $l>e+1$; see \cite[Corollary 4]{kuc10}. In this way, the theory of exceptional polynomial mappings, previously developed for root systems of type $A_n$ by Lidl and Wells \cite{lidlwells}, is extended to the setting of arbitrary simple Lie algebras.

In this paper, our main purpose is to study $Q_{A_2}^k: \F_q^2 \rightarrow \F_q^2$. However, the correspondence $\F_q^2 \leftrightarrow \operatorname{Fix}(P_{A_2}^q)$ is not very useful. Because the dynamics of the first kind and the second kind Chebyshev polynomials are rather different. On the other hand, it is a surprising fact that the correspondence 
$\F_q \leftrightarrow \operatorname{Fix}(P_{A_1}^q)$
is useful to study both families $Q_{A_1}^k$ and $Q_{A_2}^k$. 

The Weyl group of $\mf{g}=A_1$ is isomorphic to $\Z_2$, and its exponent is $e=2$. We want to utilize Theorem~\ref{thm:FixPtscorrespondence}. Let $q$ be a power of a prime $p$ and $\mf{p}$ be a prime ideal of $Q(\zeta_{q^2-1})$ lying over $p$. We can consider formal exponential sums as complex-valued functions by putting $e^\lambda(\gamma)\mapsto e^{-2\pi i(\lambda,\gamma)}$ as in~\cite[Lemma 4.1]{hoffmanwithers1988}. For the Lie algebra $A_1$, there is only one simple root $\alpha$. With $\gamma=u\alpha^\vee$, $u\in\R$, the functional equation \eqref{eq:PA1functionalformal} becomes
$$P_{A_{1}}^{q}(2 \cos(2\pi u)) = 2 \cos(2\pi qu).$$
We are interested in characterizing the fixed points of the map  $P_{A_{1}}^{q}$ by certain cosine values. The cosine function is even and periodic. This implies that $2\pi qu = 2\pi u + 2\pi c$  or $2\pi qu = -2\pi u + 2\pi c$ for some $c \in \mathbf{Z}$. We define the finite sets
$$\mc{A}_{q} = \left\{ 2\cos\left(2\pi \frac{c}{q-1}\right) \mid c \in \mathbf{Z} \right\}, \quad \mc{B}_{q} = \left\{ 2\cos\left(2\pi \frac{c}{q+1}\right) \mid c \in \mathbf{Z} \right\}.$$
These sets consist of algebraic numbers in the cyclotomic extension $\Q(\zeta_{q^2-1})$. We remark that these sets $\mc{A}_{q}$ and $\mc{B}_{q}$ are very similar to the sets $S_1$ and $S_2$ of \cite{cohen1994}, respectively. Throughout the paper, we shall frequently use the bijection
provided by Theorem~\ref{thm:FixPtscorrespondence}. For future reference, we record it as follows
\begin{equation}\label{eq:one-to-one}
	\F_q \leftrightarrow \Fix(P_{A_{1}}^{q}) = \mc{A}_{q} \cup \mc{B}_{q}.
\end{equation}
Considering formal exponential sums as complex-valued functions, the functional equation \eqref{eq:QA1functionalformal} that define the polynomials $Q^k_{A_1}$ becomes
\begin{equation}\label{eq:QA1functionaltrigo}
	\begin{gathered}
		Q^k_{A_1}(2\cos{(2\pi u)})=\frac{\sin{(2\pi(k+1)u)}}{\sin{(2\pi u)}} \text{\quad for } u \in \left(0,\frac{1}{2}\right),\\
		Q^k_{A_1}(2)=k+1,\\
		Q^k_{A_1}(-2)=(-1)^k(k+1).
	\end{gathered}
\end{equation}
We restrict the domain from $\R$ to $[0,1/2]$ to make the related cosine function, namely $2\cos(2\pi u)$, one-to-one. This will be useful while proving Theorem~\ref{thm:invimageofzero}. 

\begin{remark}
Let $q$ be a power of an odd prime $p$. Matthews gives a system of congruences for Dickson polynomials of the second kind to be a permutation of a finite field $\F_q$ \cite[Theorem~2.5]{matthews1982}. More precisely, he preoves that the polynomial $Q^k_{A_1}(x)$ permutes $\F_q$ if $k$ satisfies
\[\begin{cases}
	k+1\equiv \pm2 \pmod p,\\
	k+1\equiv \pm2 \pmod{\frac{q-1}{2}},\\
	k+1\equiv \pm2 \pmod{\frac{q+1}{2}}.
\end{cases}\]
We emphasize that the use of \eqref{eq:one-to-one} in this context is not new. For instance, if $k+1\equiv \pm2 \pmod{\frac{q-1}{2}}$, then $Q^k_{A_1}(x):\mc{A}_{q} \rightarrow \mc{A}_{q}$ is nothing but the map $x\mapsto \pm 1$. A similar conclusion holds for $\mc{B}_{q}$. Matthews employed these ideas in his thesis; however, their origins can be traced to even older results.
\end{remark}

\section{Cohen's Theorem revisited}\label{section:A1}
Recall that Cohen~\cite{cohen1994} gives a system of necessary conditions for the Chebyshev polynomial $Q^k_{A_1}$ of the second kind to be a permutation of $\F_p$, see Theorem~\ref{thm:CohenMatt}. In this section, we will provide a criterion that concludes $Q_{A_1}^k: \F_q \rightarrow \F_q$ is not a permutation if $p$ is sufficiently large compared to a certain function of $k$. If $q=p$, then this result is a partial case of Cohen's result. However, our method can be generalized to the bivariate case.
\begin{theorem}\label{thm:A1main}
Let $k>1$ be an integer. Let $q$ be a power of an odd prime $p$. If $Q_{A_1}^k:\F_q \rightarrow \F_q$ is a permutation, then $p \leq ((k+1)/2)^2+1$. Moreover, $Q_{A_1}^k$ is not arithmetically exceptional.
\end{theorem}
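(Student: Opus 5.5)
The plan is to work entirely inside the parametrization \eqref{eq:one-to-one}: $\F_q \leftrightarrow \mc{A}_q\cup\mc{B}_q$, obtained from Theorem~\ref{thm:FixPtscorrespondence} by reduction modulo a prime $\mf{p}$ of $\Q(\zeta_{q^2-1})$ above $p$. Since $Q^k_{A_1}$ has integer coefficients, $Q^k_{A_1}(\alpha)\bmod \mf{p}=Q^k_{A_1}(\alpha\bmod\mf{p})$. So $Q^k_{A_1}$ permutes $\F_q$ if and only if the complex values $Q^k_{A_1}(\alpha)$, $\alpha\in\mc{A}_q\cup\mc{B}_q$, are pairwise incongruent modulo $\mf{p}$. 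These values are given explicitly by \eqref{eq:QA1functionaltrigo}. The strategy is to exhibit two distinct fixed points $\alpha\neq\alpha'$ and set $\beta=Q^k_{A_1}(\alpha)-Q^k_{A_1}(\alpha')$. If $Q^k_{A_1}$ permutes $\F_q$, then $\beta\not\equiv 0\pmod{\mf{p}}$. I would turn this into the size bound by exhibiting a concrete relation that forces $\mf{p}\mid\beta$ (or forces $\mf{p}$ to divide a related element), and then comparing $p$ with the absolute norm of that element.

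First I dispose of the easy parity case. By \eqref{eq:QA1functionaltrigo}, $Q^k_{A_1}(2)=k+1$ and $Q^k_{A_1}(-2)=(-1)^k(k+1)$. For $k$ even these coincide, while $2\not\equiv-2$ modulo an odd $p$. Hence $Q^k_{A_1}$ is never a permutation for even $k$, and we may assume $k$ odd.

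Next I look at the preimage of $0$, as hinted by the later reference to Theorem~\ref{thm:invimageofzero}. The complex roots of $Q^k_{A_1}$ are $2\cos(\pi j/(k+1))$ for $1\le j\le k$. For $k$ odd, $j=(k+1)/2$ gives the root $0\in\F_q$. A permutation has exactly one zero in $\F_q$, so no other root may reduce into $\F_q$ and coincide there with a fixed point. Concretely, I would take the element $x_j=2\cos(\pi j/(k+1))$ for a well-chosen $j$ (say $j=1$, or $j$ with $2j/(k+1)$ of small denominator) and consider the products
\[\prod_{\alpha\in\mc{A}_q\cup\mc{B}_q}(x_j-\alpha),\]
or equivalently the values $Q^k_{A_1}(\alpha)$ reduced mod $\mf{p}$. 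From this I would extract an algebraic integer $\beta\neq 0$, built from $\sin((k+1)\theta)/\sin\theta$, that must be divisible by $\mf{p}$. The natural candidate is the difference of two values of the form $\sin(2\pi(k+1)u)/\sin(2\pi u)$, which live in a real subfield. The key observation is that $\sin((k+1)\theta)/\sin\theta$ is a sum of $k+1$ roots of unity. Pairing terms, its real conjugates have absolute value at most about $(k+1)/2$ once symmetrized. Taking the norm down to a quadratic subfield should then give $p\le |N(\beta)|\le ((k+1)/2)^2+1$.

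The main obstacle is precisely this step: identifying the specific cyclotomic element whose norm is divisible by $p$ and whose conjugates are controlled uniformly in $q$. The difficulty is that the degree of $\Q(\zeta_{q^2-1})$ grows with $q$, so a crude bound on the full norm gives nothing independent of $q$. What I would try first is to choose $\beta$ to lie in a field of degree bounded in terms of $k$ only, namely a subfield of $\Q(\zeta_{2(k+1)})$, for example $\beta=x_j^2-c$ with $c\in\Z$. I would then bound the product of its conjugates by AM--GM, with $|x_j|\le 2$. Once $p\le((k+1)/2)^2+1$ holds for every permuting $q$, only finitely many primes $p$ can have $Q^k_{A_1}$ permuting $\F_p$. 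Together with the even case, this shows $Q^k_{A_1}$ is not arithmetically exceptional for $k>1$.
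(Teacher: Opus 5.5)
Your even-$k$ case (via $Q^k_{A_1}(2)=Q^k_{A_1}(-2)$ and $2\not\equiv-2$) is fine, and looking at the preimage of $0$ is the right first reduction. However, the step that produces $p\le((k+1)/2)^2+1$ is missing, and the mechanism you sketch for it points the wrong way. Take $\beta=Q^k_{A_1}(\alpha)-Q^k_{A_1}(\alpha')\neq 0$ with $\mf{p}\mid\beta$. Then $p\le|N(\beta)|$, so a norm bound on such a difference only shows that a given collision can occur for \emph{small} $p$. That is the opposite of what you need, namely a collision for every large $p$. Under the permutation hypothesis these differences are never divisible by $\mf{p}$, so they cannot be the element ``forced'' to be divisible by $\mf{p}$.

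Likewise, the products $\prod_{\alpha}(x_j-\alpha)$ only detect whether some root $2\cos(\pi j/(k+1))$ reduces into $\F_q$. That handles $p\mid(k+1)$ and the case where $\gcd(\frac{q-1}{2},k+1)\gcd(\frac{q+1}{2},k+1)\neq 2$ (Theorem~\ref{thm:invimageofzero}). But when this gcd-condition holds, the preimage of $0$ is just $\{0\}$, and your plan has no argument. Also, $\sin((k+1)\theta)/\sin\theta$ has conjugates as large as $k+1$, so the heuristic bound by $(k+1)/2$ is wrong.

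The missing idea is an exact global identity that the permutation hypothesis forces. For odd $k$ we have $Q^k_{A_1}(0)=0$, so a permutation of $\F_q$ permutes $\F_q^\times$. Hence $\prod_{a\in\F_q^\times}Q^k_{A_1}(a)\equiv\prod_{a\in\F_q^\times}a\equiv-1 \pmod p$; equivalently, the $x$-coefficient of $\prod_a(x-Q^k_{A_1}(a))$ must match that of $x^q-x$.

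The paper evaluates the left side exactly in characteristic zero. It groups $\Fix(P^q_{A_1})\setminus\{0\}$ into conjugacy classes $[\alpha_d]$ with $d\mid q\pm1$, so the product becomes $\prod_d N_{\Q}^{\Q(\alpha_d)}(Q^k_{A_1}(\alpha_d))$. It then writes $Q^k_{A_1}(\alpha_d)=\zeta_d^{-k}(1-\zeta_d^{2(k+1)})/(1-\zeta_d^2)$ and applies Proposition~\ref{prop:norm} together with the gcd-condition. Each norm agrees up to sign with $N_{\Q}^{\Q(\alpha_d)}(\alpha_d)$, except for $d=1,2$, which contribute $k+1$ and $(-1)^k(k+1)$ instead of $\pm2$.

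This gives $\prod_{\alpha\neq0}Q^k_{A_1}(\alpha)=\pm((k+1)/2)^2\bigl((-1)^{(q-1)/2}q-1\bigr)\equiv\mp((k+1)/2)^2\pmod p$. So a permutation forces $p$ to divide $((k+1)/2)^2\pm1$, a nonzero integer for $k\ge3$, and hence $p\le((k+1)/2)^2+1$. The bounded element you were looking for is this rational integer, obtained from an exact evaluation over all of $\F_q$. It does not come from a size estimate on conjugates in a field of bounded degree. Without an identity of this kind, your argument does not reach the stated bound.
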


We first observe that the polynomial $Q^k_{A_1}(x)$ is even for even $k$ and is odd for odd $k$. This can be proved directly by the recursive relation~(\ref{eq:PQA1recursions}). If $k$ is even, then $Q^k_{A_1}(x) = h(x^2)$ for some polynomial $h(x)\in\Z[x]$. It follows that the map $Q_{A_{1}}^{k} : \F_{q} \to \F_{q}$ is not a permutation if $k$ is even and $p$ is odd. From this point on, we only focus on the odd values of $k$ and $p$. 

The inverse image of zero under the map $Q_{A_{1}}^{k} : \F_{q} \to \F_{q}$ will play a central role. We adjoin the roots of the polynomial equation $Q_{A_{1}}^{k}(x)=0$ to $\F_q$, and denote the resulting finite field by $\F_{\tilde{q}}$. We will use the one-to-one correspondence $\Fix(P^{\tilde{q}}_{A_1}) \leftrightarrow \F_{\tilde{q}}$ given by \eqref{eq:one-to-one}. Since $k$ is odd, we already have $Q_{A_{1}}^{k}(0)=0$.

Suppose that $p$ divides $(k+1)$ then \eqref{eq:QA1functionaltrigo} implies that $Q^k_{A_1}(2) = Q^k_{A_1}(-2)=0$. In this case, the inverse image of zero under $Q_{A_{1}}^{k} : \F_{q} \to \F_{q}$ is nontrivial since it includes at least three distinct elements, namely $0$, and $\pm 2$. We also note that $2\cos(2\pi u)$ is equal to $2$, and $-2$ for $u=0$, and $u=1/2$, respectively.

We claim that the solutions of the equation $Q_{A_{1}}^{k}(x)\equiv 0 \pmod{\mf{p}}$, with $p \nmid (k+1)$, are in one-to-one correspondence with the following set
$$\mc{Z}_{k} := \left\{ 2\cos\left(2\pi \frac{d}{2(k+1)}\right) \mid c\in \Z \right\} \setminus \{-2,2\}.$$
This set is obviously a subset of $\mc{A}_{{\tilde{q}}} \cup \mc{B}_{{\tilde{q}}}$ by construction. Suppose that $t=e^{2\pi iu}$ for some $u\in \Q \setminus \Z$. In the complex setting, we have the following generic equation by \eqref{eq:QA1functionaltrigo} 
\begin{equation}\label{eq:QA1(alpha)}
	Q_{A_1}^k\left(t+\frac{1}{t} \right)= t^{-k}\left(\dfrac{1-t^{2(k+1)}}{1-t^2}\right) =0.
\end{equation}
The equality on the right-hand side is preserved if the underlying algebraic elements are reduced modulo $\mf{p}$. If $1-t^{2(k+1)}$ is congruent to zero modulo $\mf{p}$, then it must be zero as a complex number as well. This is due to the fact that  $t=e^{2\pi iu}$ is a root of unity for any $u\in\Q$. This proves our claim. 

Now, we focus on the intersection $\mc{Z}_{k} \cap (\mc{A}_{q} \cup \mc{B}_{q})$, and obtain the following theorem.

\begin{theorem}\label{thm:invimageofzero}
Let $k \ge 3$ be an odd integer, $q$ be a power of an odd prime $p$ with $p \nmid (k+1)$. Consider $Q_{A_{1}}^{k} : \F_{q} \to \F_{q}$. We have
$$
|(Q_{A_{1}}^{k})^{-1}(\{0\})| = \gcd\left(\frac{q-1}{2}, k+1\right) + \gcd\left(\frac{q+1}{2}, k+1\right) - 2.
$$
\end{theorem}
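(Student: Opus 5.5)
Via the bijection \eqref{eq:one-to-one}, the zeros of $Q^k_{A_1}$ in $\F_q$ correspond to the elements of $\mc{Z}_k \cap (\mc{A}_q \cup \mc{B}_q)$, so the count reduces to counting cosine values. By the claim preceding the theorem, an element $x\in\F_{\tilde q}$ satisfies $Q^k_{A_1}(x)=0$ (with $p\nmid k+1$) exactly when its lift in $\Fix(P^{\tilde q}_{A_1})$ lies in $\mc{Z}_k$. Since $\F_q\subseteq\F_{\tilde q}$ and $\Fix(P^q_{A_1})\subseteq \Fix(P^{\tilde q}_{A_1})$, compatibly with reduction modulo a prime $\mathfrak{P}$ above $\mf{p}$, the elements of $\F_q$ correspond to $\mc{A}_q\cup\mc{B}_q$. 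So it suffices to compute the cardinality of $\mc{Z}_k\cap(\mc{A}_q\cup\mc{B}_q)$ as a set of complex numbers.

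Next I parametrize by angles. Since $u\mapsto 2\cos(2\pi u)$ is injective on $[0,1/2]$, each of these sets is identified with a set of rationals in $[0,1/2]$. Concretely, $\mc{Z}_k$ corresponds to $\{d/(2(k+1)) : 0<d<k+1\}$, i.e.\ the points of $\frac{1}{2(k+1)}\Z$ in the open interval $(0,1/2)$. Similarly, $\mc{A}_q$ corresponds to $\frac{1}{q-1}\Z\cap[0,1/2]$ and $\mc{B}_q$ to $\frac{1}{q+1}\Z\cap[0,1/2]$. Since $\frac{1}{m}\Z\cap\frac1n\Z=\frac{1}{\gcd(m,n)}\Z$, we get
\[
\mc{Z}_k\cap\mc{A}_q \leftrightarrow \tfrac{1}{g_1}\Z\cap(0,1/2),\qquad g_1=\gcd(2(k+1),q-1)=2\gcd\left(k+1,\tfrac{q-1}{2}\right),
\]
using that $q$ is odd. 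This set has $g_1/2-1$ elements, since $g_1$ is even and the endpoint $1/2$ is excluded. The same computation gives $\gcd\left(k+1,\frac{q+1}{2}\right)-1$ elements for $\mc{B}_q$.

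Finally, I check that $\mc{A}_q$ and $\mc{B}_q$ do not overlap inside $\mc{Z}_k$. A common angle would lie in $\frac{1}{\gcd(q-1,q+1)}\Z=\frac12\Z$, and those points correspond only to $\pm2$, which are excluded from $\mc{Z}_k$. Adding the two counts gives the formula in the statement.

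I expect the main obstacle to be the careful justification of the first step. One must check that the bijection for $\F_{\tilde q}$ restricts to the bijection for $\F_q$, i.e.\ that the elements of $\Fix(P^{\tilde q}_{A_1})$ which reduce into $\F_q$ are exactly those in $\Fix(P^q_{A_1})$. This follows from injectivity of the reduction on $\Fix(P^{\tilde q}_{A_1})$ together with the containment $\Fix(P^{q}_{A_1})\subseteq\Fix(P^{\tilde q}_{A_1})$. The containment holds because $\tilde q$ is a power of $q$ and $P^{q^r}=(P^q)^{r}$. One must also confirm that the value $0$ (angle $1/4$) is counted correctly: it lies in $\mc{A}_q$ or $\mc{B}_q$ according to $q \bmod 4$, consistent with the gcd terms, since $k+1$ is even.
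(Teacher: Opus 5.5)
Your proposal is correct and follows essentially the same route as the paper. Both arguments identify the zeros in $\F_q$ with $\mathcal{Z}_k\cap(\mathcal{A}_q\cup\mathcal{B}_q)$ and count each intersection as the points of $\frac{1}{2t}\Z$ in $(0,1/2)$, where $t=\gcd\bigl(\frac{q\mp1}{2},k+1\bigr)$ (the paper via coprime reduced denominators, you via $\frac1m\Z\cap\frac1n\Z=\frac{1}{\gcd(m,n)}\Z$); both then use $\mathcal{A}_q\cap\mathcal{B}_q=\{\pm2\}$. Your extra check that the $\F_{\tilde q}$-bijection restricts to the $\F_q$-bijection, via $\Fix(P^{q}_{A_1})\subseteq\Fix(P^{\tilde q}_{A_1})$ and injectivity of reduction, makes explicit a step the paper leaves implicit.
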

\begin{proof} Suppose that $2\cos(2\pi u) \in \mc{A}_{q} \cap \mc{Z}_{k}$. This is possible if and only
\[\frac{c}{q-1} = u = \frac{d}{2(k+1)}\]
for some unique $u \in (0,1/2)$, and integers $c,d$. We set $t = \gcd\left((q-1)/2, k+1\right)$, and write
\[\frac{c}{(q-1)/(2t)} = 2tu = \frac{d}{(k+1)/t}.\]
The denominators, namely  $(q-1)/(2t)$ and $(k+1)/t$, are corpime. Therefore, the above equality holds if and only if $2tu \in \Z$. We have 
$$u \in \left\{ \dfrac{1}{2t}, \dots, \dfrac{t-1}{2t}\right\}.$$ 
Therefore 
$$|\mc{A}_{q} \cap \mc{Z}_{k}| = t-1 = \gcd\left(\frac{q-1}{2}, k+1\right) - 1.$$
A similar conclusion holds for $\mc{B}_{q}$. Note that $\mc{A}_{q} \cap \mc{B}_{q} =\{-2,2\}$. Since $p \nmid (k+1)$, then the inverse image of zero does not contain any of these elements. We have 
\[|(Q_{A_{1}}^{k})^{-1}(\{0\})|= |(\mc{A}_{q} \cup \mc{B}_{q}) \cap \mc{Z}_{k}| = |\mc{A}_{q} \cap \mc{Z}_{k}| + |\mc{B}_{q} \cap \mc{Z}_{k}|.\]
This finishes the proof.
\end{proof}

The inverse image of zero contains a single element if and only if one of the gcd-terms is one, and the other is two. Thus, the following is an immediate consequence of the above theorem.
\begin{corollary}\label{cor:A1invzero}
Let $k \ge 3$ be an odd integer, $q$ be a power of an odd prime $p$. Suppose that $p \nmid (k+1)$. Then, the inverse image of $\{0\}$ under the map $Q_{A_{1}}^{k}:\F_{q} \to \F_{q}$ contains a single element if and only if the following gcd-condition holds 
	$$
	\gcd\left(\frac{q-1}{2}, k+1\right) \cdot \gcd\left(\frac{q+1}{2}, k+1\right) = 2.
	$$
\end{corollary}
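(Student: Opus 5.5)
The plan is to read Corollary~\ref{cor:A1invzero} off Theorem~\ref{thm:invimageofzero}, which under the same hypotheses gives
\[
|(Q_{A_1}^k)^{-1}(\{0\})| = g_1 + g_2 - 2, \qquad g_1=\gcd\left(\tfrac{q-1}{2},k+1\right),\quad g_2=\gcd\left(\tfrac{q+1}{2},k+1\right).
\]
So the inverse image of zero is a single point exactly when $g_1+g_2=3$. The whole argument is then to show that, for positive integers arising this way, $g_1+g_2=3$ is equivalent to $g_1g_2=2$.

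First I establish the parity constraint. Since $k$ is odd, $k+1$ is even. Since $q$ is odd, the numbers $(q-1)/2$ and $(q+1)/2$ are consecutive integers, so exactly one of them is even. Hence exactly one of $g_1,g_2$ is even, and that one is at least $2$. The other is at least $1$.

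Next I check both directions.
\begin{itemize}
\item If $g_1g_2=2$, then $\{g_1,g_2\}=\{1,2\}$, so $g_1+g_2=3$. The inverse image then has $3-2=1$ element.
\item Conversely, suppose $g_1+g_2=3$. Since both are positive integers, $\{g_1,g_2\}=\{1,2\}$, and so $g_1g_2=2$.
\end{itemize}
This matches the remark preceding the corollary that one gcd must be one and the other two.

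There is no real obstacle; the only subtlety is recording that the hypotheses of Theorem~\ref{thm:invimageofzero} ($k\ge 3$ odd, $p$ odd, $p\nmid k+1$) are exactly those of the corollary. The parity observation is not needed for the equivalence itself, but it confirms that the case $g_1=g_2$ cannot occur, so the count is always at least $1$. This is consistent with $0$ always lying in the inverse image for odd $k$.
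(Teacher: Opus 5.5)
Your proposal is correct and follows the paper's argument: the paper also reads the corollary directly off Theorem~\ref{thm:invimageofzero}. There, a single preimage means $g_1+g_2=3$, which for positive integers is equivalent to $\{g_1,g_2\}=\{1,2\}$, i.e.\ $g_1g_2=2$. Your parity observation is a harmless sanity check that the paper does not need.
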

We have now seen that, the map $Q^k_{A_1}:\F_q\rightarrow\F_q$ is not a permutation ($k\ge 3$ is odd and $q$ is a power of an odd prime $p$), if $p\mid(k+1)$ or the gcd-condition of the above corollary fails. For the remaining cases, we introduce a method that uses the norms of certain elements from cyclotomic extensions.

Let $K$ be a number field. Recall that the (field) norm of an element $\alpha \in K$ is defined as the product of conjugates of $\alpha$. See \cite{marcus2018} for more details. In general the conjugates may not remain in $K$. However, the cyclotomic extensions are normal extensions, and the conjugate elements are obtained by replacing each root of unity with a suitable power.

Let $\Phi_n(x)$ denote the $n$-th cyclotomic polynomial. Its roots are primitive $n$-th roots of unity. By definition, we have
$$
\Phi_n(x)=\prod_{\substack{1\leq j\leq n\\\gcd(j,n)=1}}\left(x-\zeta_n^j\right).
$$
We now state a well-known fact in the theory of cyclotomic fields. To simplify the notation in further statements, we let $\Z^{+}$ to be the set of positive integers, and let $\mathbb{P}$ be its subset consisting of prime numbers. 
\begin{proposition}\label{prop:norm}
	Let $n$ and $j$ be positive integers such that $\gcd(j,n)=1$. Then,
	$$
	\norm{\Q}{\cycl{n}}\left(1-\zeta_n^j\right)=
	\begin{cases}
		0 & \text{if } n=1, \\
		l & \text{if } n=l^a \text{ for some } l\in\mathbb{P},  a \in \Z^{+}, \\
		1 & \text{otherwise}.
	\end{cases}
	$$
\end{proposition}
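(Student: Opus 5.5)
Since $\gcd(j,n)=1$, the element $\zeta_n^j$ is itself a primitive $n$-th root of unity, and the Galois conjugates of $1-\zeta_n^j$ over $\Q$ are exactly the elements $1-\zeta_n^m$ with $1\le m\le n$ and $\gcd(m,n)=1$. I will therefore first reduce to $j=1$ and write
\[
\norm{\Q}{\cycl{n}}\left(1-\zeta_n^j\right)=\prod_{\substack{1\le m\le n\\ \gcd(m,n)=1}}\left(1-\zeta_n^m\right)=\Phi_n(1),
\]
using the product formula for $\Phi_n(x)$ recalled just before the statement, evaluated at $x=1$. The case $n=1$ is immediate: $\zeta_1=1$, so $1-\zeta_1^j=0$ and the norm is $0$ (equivalently $\Phi_1(1)=0$). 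The proposition is then reduced to computing $\Phi_n(1)$ for $n\ge 2$.

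For $n\ge2$ I will use the factorization $x^n-1=\prod_{d\mid n}\Phi_d(x)$. Dividing by $x-1=\Phi_1(x)$ gives
\[
1+x+\cdots+x^{n-1}=\prod_{\substack{d\mid n\\ d>1}}\Phi_d(x),
\]
and evaluating at $x=1$ yields $n=\prod_{d\mid n,\,d>1}\Phi_d(1)$. Next I will prove by strong induction on $n$ that $\Phi_n(1)=l$ if $n=l^a$ is a prime power and $\Phi_n(1)=1$ otherwise. The base case $n=l$ prime is clear: the product has the single factor $\Phi_l(1)$, which equals $l$. For the inductive step, write $n=\prod_i l_i^{a_i}$. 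The divisors $d>1$ of $n$ that are prime powers contribute $\prod_i l_i^{a_i}=n$ by the induction hypothesis, except possibly $d=n$ itself. If $n$ is a prime power $l^a$, the proper divisors $d=l^b$ with $1\le b<a$ contribute $l^{a-1}$, so $\Phi_n(1)=n/l^{a-1}=l$. If $n$ is not a prime power, all prime-power divisors are proper and already contribute $n$, while the remaining proper non-prime-power divisors contribute $1$ by induction; hence $\Phi_n(1)=1$.

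An alternative route for the prime-power case is the closed form $\Phi_{l^a}(x)=\Phi_l(x^{l^{a-1}})=1+x^{l^{a-1}}+\cdots+x^{(l-1)l^{a-1}}$, which gives $l$ at $x=1$ directly. No step is a genuine obstacle. The only point needing care is the bookkeeping in the induction, in particular treating $d=n$ separately from the proper divisors. One should also note that the norm is taken from $\cycl{n}$, whose degree over $\Q$ is $\varphi(n)$, so no extra power of the product appears.
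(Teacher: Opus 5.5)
Your proof is correct and follows essentially the same route as the paper: identify the norm with $\Phi_n(1)$, then evaluate it using $n=\prod_{d\mid n,\,d>1}\Phi_d(1)$. The paper cites Marcus and Washington for the values of $\Phi_d(1)$, while you prove them with an explicit strong induction. You also read the norm off directly as $\prod_{\gcd(m,n)=1}(1-\zeta_n^m)$, which avoids the paper's sign argument via the constant term of a minimal polynomial; that argument is imprecise for $n=2$, where $\Phi_2$ has odd degree.
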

\begin{proof} We outline the proof for the convenience of the reader. For $n=1$, the result is obvious. If $n>1$, and $\gcd(j,n)=1$, then 
\[\norm{\Q}{\cycl{n}}\left(1-\zeta_n^j\right) =\Phi_n(1). \]	
This is because $\Phi_n(x+1)$ is the minimal polynomial of the element $1-\zeta_n^j$, and its constant term gives the norm up to a plus or a minus sign. However, the degree of $\Phi_n$ is even, and the minus sign never occurs. There is a natural factorization $(x^n - 1) = \prod_{d|n} \Phi_d(x)$.
Dividing both sides by $x-1=\Phi_1(x)$, and putting $x=1$, we obtain the equality
$$n=\prod_{\substack{d\mid n\\d>1}}\Phi_d(1).$$ 
Let us focus on each factor $\Phi_d(1)$. There are two separate cases. If $d=l^a$ for some $l\in\mathbb{P}$, and $a \in \Z^{+}$, then $\Phi_{l^a}(1)=l$. See \cite[Chapter~2]{marcus2018} for more details. If $d$ has at least two prime factors, then $\Phi_{d}(1)=1$. This is achieved by canceling all the prime factors from $n=\prod\Phi_d(1)$ by eliminating all $\Phi_{l^a}(1)=l$. See \cite[Proposition~2.8]{washingtoncyclotomic1997} for more details.
\end{proof}

We will repeatedly use certain cosine values guided by the one-to-one correspondence \eqref{eq:one-to-one}. For this purpose, we define 
$$\alpha_d:=\zeta_d + \zeta_d^{-1}=2\cos\left(\frac{2\pi}{d}\right).$$
We shall obtain a formula for the norm of $\alpha_d$. We achieve this by first calculating the norm in the extension $\cycl{d}/\Q$ and then recovering the norm in the smaller extension $\Q(\alpha_d)/\Q$. We recall that the field $\Q(\alpha_d)$ is the maximal real subfield of $\cycl{d}$ and we have $[\cycl{d}:\Q(\alpha_d)]=2$ when $d > 2$. Following \cite[Chapter~2]{marcus2018}, we can relate these norms through different extensions, using the relative degree, as follows 
\begin{equation}\label{eq:norminextension}
N_{\Q}^{\cycl{d}}(\alpha_d) = \left( N_{\Q}^{\Q(\alpha_d)}(\alpha_d) \right)^2.	
\end{equation}

The following theorem will be helpful while studying certain coefficients of a polynomial whose roots are conjugates of various elements of the form $\alpha_d$. 

\begin{theorem}\label{thm:A1norminput}
The norm of $\alpha_d$ is given by:
\[N_{\Q}^{\Q(\alpha_d)}(\alpha_d) = 
\begin{cases} 
	2 & \text{if } d = 1, \\
	-2 & \text{if } d = 2, \\
	0 & \text{if } d = 4, \\
	\pm l & \text{if } d = 4l^a\text{ for some }l\in\mathbb{P}, a\in\Z^{+},\\
	\pm 1 & \text{otherwise}.
\end{cases}\]
\end{theorem}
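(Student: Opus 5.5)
The plan is to write $\alpha_d$ as a quotient of elements of the form $1-\zeta$ and then apply Proposition~\ref{prop:norm}. The cases $d=1,2$ are immediate: $\alpha_1=2$ and $\alpha_2=-2$ are rational, so each is its own norm. Similarly $\alpha_4=\zeta_4+\zeta_4^{-1}=i-i=0$, so its norm is $0$.

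For $d>2$ with $d\neq 4$ I would start from the identity
\[
\alpha_d=\zeta_d^{-1}(1+\zeta_d^{2})=\zeta_d^{-1}\,\frac{1-\zeta_d^{4}}{1-\zeta_d^{2}}.
\]
The denominator is nonzero because $\zeta_d^2\ne 1$ for $d>2$. I would compute the norm in the full cyclotomic field $\cycl{d}$ and then pass to the real subfield $\Q(\alpha_d)$ using \eqref{eq:norminextension}.

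In $\cycl{d}$, the unit $\zeta_d^{-1}$ is a root of unity whose conjugates are again roots of unity. Hence its norm is a rational root of unity, namely $\pm1$. For an element $x$ lying in a subfield $\Q(\zeta_m)\subseteq\cycl{d}$, I would use transitivity of the norm:
\[
N_\Q^{\cycl{d}}(x)=\bigl(N_\Q^{\cycl{m}}(x)\bigr)^{\varphi(d)/\varphi(m)}.
\]
The element $\zeta_d^{2}$ is a primitive $d/\gcd(d,2)$-th root of unity, and $\zeta_d^4$ is a primitive $d/\gcd(d,4)$-th root of unity. Applying Proposition~\ref{prop:norm} to each, I would split into three cases.

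\emph{Case $d$ odd, or $d\equiv 2\pmod 4$.} Here $\zeta_d^2$ and $\zeta_d^4$ are primitive roots of the same order. So the numerator and denominator have equal nonzero norms, and $N_\Q^{\cycl{d}}(\alpha_d)=\pm1$.

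\emph{Case $d=4m$ with $m>1$.} The denominator is $1-\zeta_{2m}$, with exponent $\varphi(4m)/\varphi(2m)=2$. The numerator is $1-\zeta_m$, with exponent $\varphi(4m)/\varphi(m)$, which equals $2$ if $m$ is odd and $4$ if $m$ is even.
\begin{itemize}
\item If $m=l^a$ with $l$ odd, then $N(1-\zeta_m)=l$ and $N(1-\zeta_{2m})=1$, since $2m$ has two prime factors. The norm is therefore $\pm l^2$.
\item If $m=2^a$, the norm is $\pm 2^4/2^2=\pm 2^2$.
\item Otherwise $m$ has at least two prime factors, and so does $2m$. Both norms equal $1$, giving $\pm1$.
\end{itemize}

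Finally, \eqref{eq:norminextension} says that $N_\Q^{\cycl{d}}(\alpha_d)$ is the square of the rational integer $N_\Q^{\Q(\alpha_d)}(\alpha_d)$. That square is nonnegative, so taking square roots yields $\pm l$ in the case $d=4l^a$ and $\pm1$ otherwise, which is the statement.

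The step I expect to require the most care is the exponent bookkeeping $\varphi(d)/\varphi(m)$ across the subfields, in particular the even case $m=2^a$. There the exponents $4$ and $2$ differ, and the prime $2$ must come out with the correct final power. I would double-check it on $d=8$: there $\alpha_8=\sqrt2$, which has norm $-2$, consistent with the claim.
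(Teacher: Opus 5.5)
Your proposal is correct and follows the paper's proof essentially step for step. It uses the same factorization $\alpha_d=\zeta_d^{-1}(1-\zeta_d^4)/(1-\zeta_d^2)$, applies Proposition~\ref{prop:norm} to numerator and denominator with the same relative-degree exponents ($4$ and $2$ for $d=2^a$; $2$ and $2$ for $d=4l^a$ with $l$ odd), and passes to $\Q(\alpha_d)$ via \eqref{eq:norminextension}. Your split into $d$ odd, $d\equiv 2 \pmod 4$, and $d=4m$ also spells out the unit cases that the paper only sketches.
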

\begin{proof}
The cases where $d$ equals $1,2,$ or $4$ are trivial as $\alpha_1=2$, $\alpha_2=-2$, and $\alpha_4=0$, respectively. For the remaining cases, we shall prove 
$$N_{\Q}^{\cycl{d}}(\alpha_d) = 
\begin{cases}
	l^2 & \text{if } d = 4l^a\text{ for some }l\in\mathbb{P}, a\in\Z^{+},\\
	1 & \text{otherwise}.
\end{cases}
$$
in light of \eqref{eq:norminextension}. We note the identity $$\alpha_d=\zeta_d+\zeta_d^{-1}=\zeta_d^{-1}\left(\frac{1-\zeta_d^4}{1-\zeta_d^2}\right).$$ 
Using the multiplicativity of the norm, and the fact that $\zeta_d^{-1}$ is a unit, we have
$$
N_{\Q}^{\cycl{d}}(\alpha_d)=\pm \frac{N_{\Q}^{\cycl{d}}\left(1-\zeta_d^4\right)}{N_{\Q}^{\cycl{d}}\left(1-\zeta_d^2\right)}.
$$
We want to utilize Proposition~\ref{prop:norm}. In this respect, we analyze the numerator and denominator for all cases of $d$. Starting with non-units, we first consider the cases $d=4l^a$ where $l$ is a prime number and $a$ is a positive integer. We further separate this case by the prime's parity.

If $d=2^a$ with $a\ge 3$, the numerator is $1-\zeta_{2^a}^4 = 1-\zeta_{2^{a-2}}$ and the denominator is $1-\zeta_{2^a}^2 = 1-\zeta_{2^{a-1}}$. Both have norm $2$ in the minimal number fields containing them by Proposition~\ref{prop:norm}. Moreover, the relative degrees of these extension are
$$[\cycl{2^a}:\cycl{2^{a-2}}] = \varphi(2^a)/\varphi(2^{a-2}) = 2^{a-1}/2^{a-3} = 4$$
and
$$[\cycl{2^a}:\cycl{2^{a-1}}] = \varphi(2^a)/\varphi(2^{a-1}) = 2^{a-1}/2^{a-2} = 2.$$
Therefore, we obtain $N_{\Q}^{\cycl{d}}(\alpha_d) = 2^4/2^2 = 2^2$ by using relative norms.
	
If $d=4l^a$ where $l$ is an odd prime and $a\geq1$, the numerator is $1-\zeta_{4l^a}^4 = 1-\zeta_{l^a}$ and the denominator is $1-\zeta_{4l^a}^2 = 1-\zeta_{2l^a}$. By Proposition~\ref{prop:norm}, the numerator has norm $l$ in the field $\cycl{l^a}$. On the other hand, the denominator is a unit in the field $\cycl{2l^a}=\cycl{l^a}$. We have
$$[\cycl{4l^a}:\cycl{l^a}] = \varphi(4l^a)/\varphi(l^a) = \varphi(4) = 2.$$
Therefore, $N_{\Q}^{\cycl{d}}(\alpha_d) = l^2/(\pm 1)^2 = l^2$ by using relative norms.
	
For the remaining cases that yield units, we explain the main ideas and omit the details. If $d=l^a$ or $d=2l^a$ where $l$ is an odd prime, both $1-\zeta_d^4$ and $1-\zeta_d^2$ have the same norm, namely $l$, in the minimal number field containing them, namely $\cycl{l^a}$. If $d=m$ or $d=2m$ where $m$ is an odd integer that is not a prime power, both the numerator and the denominator have norm $1$. This also holds for $d=4m$ where $m$ is not a prime power, regardless of its parity.
\end{proof}

We want to introduce a tool that can be used to prove that the polynomials $Q_{A_1}^k$ with $k>1$ are not exceptional. The finite field $\F_q$ is in one-to-one correspondence with the roots of the polynomial $x^q-x$. We shall check whether the image set $Q_{A_1}^k(\F_q)$ has the same property. By definition, we have 
\begin{equation*}
	\prod\limits_{\alpha\in\Fix(P_{A_1}^q)}(x-\alpha)=P_{A_1}^q(x)-x.    
\end{equation*}
There is a one-to-one correspondence given by \eqref{eq:one-to-one}. Reducing both sides modulo a prime ideal $\mf{p}$ of $\Q(\Fix(P_{A_1}^q)$ lying over $p$, we obtain 
$$\prod\limits_{\alpha\in\F_q}(x-\alpha)\equiv x^q-x\pmod{p}.$$
If we write $P_{A_1}^q(x)-x = x^q+ \ldots+ c_2x^2+ c_1x+ c_0$, then all the coefficients $c_i$ are zero modulo $p$ except the coefficient of $x$, namely $c_1$. Recall that we have $P_{A_1}^k(x) = D_k(x,1)$, and the coefficients of $D_{k}(x, a)$ is given by the formula \eqref{eq:dickson1formula}. Putting $a=1$ in that formula, we find that
\begin{equation}\label{eq:prodfixpoints}
	c_1 = \prod\limits_{\alpha\in\Fix(P_{A_1}^q) \setminus\{0\}} \alpha= (-1)^{(q-1)/2}q-1.
\end{equation}

\begin{remark}\label{rm:coef}
This conclusion on $c_1$ can be alternatively obtained by using Theorem~\ref{thm:A1norminput} up to a plus or a minus sign. Suppose that $q-1$ is divisible by four, and let $d=4l^a$ be a divisor of $q-1$ where $l$ is a prime number. The norm of $\alpha_d$ is $l$ by Theorem~\ref{thm:A1norminput}, and therefore it is a factor of $c_1$. Repeatedly applying Theorem~\ref{thm:A1norminput} for other $\alpha_d$ with nontrivial norm, we precisely recover $\pm(q-1)$. If $q+1$ is divisible by 4, then this process results in $\pm(q+1)$. Note that $c_1=(-1)^{(q-1)/2}q-1$ is always divisible by $4$. This is exactly what one would expect in view of~\ref{thm:A1norminput}.
\end{remark}

Our purpose is to replace $\alpha$ with $Q_{A_1}^k(\alpha_d)$ and analyze the resulting change in the  product, see Theorem~\ref{thm:A1outputprod}. This analysis requires the computation of the norm of the elements $Q_{A_1}^k(\alpha_d)$.

\begin{theorem}\label{thm:A1normQ}
Let $k\ge 3$ be an odd integer and $q$ be a power of an odd prime $p$. Suppose that $p\nmid(k+1)$ and $$\gcd\left(\frac{q-1}{2}, k+1\right) \cdot \gcd\left(\frac{q+1}{2}, k+1\right) = 2.$$
If $d$ is a divisor of $q-1$ or $q+1$, then the norm of $Q_{A_1}^k(\alpha_d)$ is given by:
	$$
	N_{\Q}^{\Q(\alpha_d)}\left(Q_{A_1}^k(\alpha_d)\right) = 
	\begin{cases} 
		k+1 & \text{if } d = 1, \\
		(-1)^k(k+1) & \text{if } d = 2, \\
		0 & \text{if } d = 4, \\
		\pm l & \text{if } d = 4l^a\text{ for some }l\in\mathbb{P}, a\in\Z^{+},\\
		\pm 1 & \text{otherwise}.
	\end{cases}
	$$
\end{theorem}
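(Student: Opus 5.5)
We compute $N^{\Q(\alpha_d)}_{\Q}\bigl(Q_{A_1}^k(\alpha_d)\bigr)$ by passing to $\cycl{d}$ and writing $Q_{A_1}^k(\alpha_d)$ as a quotient of elements of the form $1-\zeta$, as in the proof of Theorem~\ref{thm:A1norminput}. The cases $d=1,2$ follow at once from \eqref{eq:QA1functionaltrigo}: $Q^k_{A_1}(2)=k+1$ and $Q^k_{A_1}(-2)=(-1)^k(k+1)$. For $d=4$ we have $\alpha_4=0$, and $Q^k_{A_1}(0)=0$ because $k$ is odd. For $d>2$ we put $t=\zeta_d$ in \eqref{eq:QA1(alpha)}, which gives
\[
Q_{A_1}^k(\alpha_d)=\zeta_d^{-k}\,\frac{1-\zeta_d^{2(k+1)}}{1-\zeta_d^{2}} .
\]
Since $\zeta_d^{-k}$ is a unit, multiplicativity of the norm gives
\[
N_{\Q}^{\cycl{d}}\bigl(Q_{A_1}^k(\alpha_d)\bigr)=\pm\frac{N_{\Q}^{\cycl{d}}\bigl(1-\zeta_d^{2(k+1)}\bigr)}{N_{\Q}^{\cycl{d}}\bigl(1-\zeta_d^{2}\bigr)}.
\]
The result then follows by taking the square root, using \eqref{eq:norminextension}.

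The key step is to show that the hypothesis forces $\zeta_d^{2(k+1)}$ to behave like $\zeta_d^4$. Let $d\mid q\pm1$. Then $\gcd(d,2(k+1))$ divides $2\gcd\bigl(\tfrac{q\pm1}{2},k+1\bigr)$ if $d$ is even, and divides $\gcd(q\pm1,k+1)$ if $d$ is odd.

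The gcd-condition says one of the two gcds equals $1$ and the other equals $2$. Consequently $\gcd(d,k+1)\mid 2$, and the $2$-adic valuation of $\gcd(d,2(k+1))$ is at most $2$. The odd part of $\gcd(d,2(k+1))$ is therefore trivial.

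We now compare $\gcd(d,2(k+1))$ with $\gcd(d,4)$. Since $k$ is odd, $4\mid 2(k+1)$, so $\gcd(d,4)$ divides $\gcd(d,2(k+1))$. In the other direction, $\gcd(d,2(k+1))$ is a power of $2$ that is at most $4$ by the gcd-condition. I will check this in each case: $d$ odd, $d\equiv 2\pmod 4$, and $4\mid d$, where in the last case one also uses $2\mid\gcd(\tfrac{q\pm1}{2},k+1)$. The conclusion is
\[
\gcd\bigl(d,2(k+1)\bigr)=\gcd(d,4).
\]
Hence $\zeta_d^{2(k+1)}$ is a primitive $d/\gcd(d,4)$-th root of unity, exactly like $\zeta_d^4$. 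The two elements are Galois conjugate, or at least have the same order, so their norms in $\cycl{d}$ coincide. Note that no $\Q$-conjugate of $\alpha_d$ is $0$ unless $d=4$.

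It follows that
\[
N_{\Q}^{\cycl{d}}\bigl(Q_{A_1}^k(\alpha_d)\bigr)=\pm N_{\Q}^{\cycl{d}}(\alpha_d).
\]
The case analysis already carried out in the proof of Theorem~\ref{thm:A1norminput} (via Proposition~\ref{prop:norm} and relative degrees) then yields $\pm l$ when $d=4l^a$ and $\pm1$ otherwise.

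The main obstacle is this gcd bookkeeping: verifying $\gcd(d,2(k+1))=\gcd(d,4)$ in every case, especially when $d$ is even and it must be determined which of $q\pm1$ carries the factor $2$. I would first reduce to a $2$-adic valuation comparison using $v_2(q-1)+v_2(q+1)=v_2(q^2-1)$ together with the fact that one of $\tfrac{q\pm1}{2}$ is odd. Everything else is a routine reuse of Theorem~\ref{thm:A1norminput}.
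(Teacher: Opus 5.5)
Your proposal is correct and uses the same mechanism as the paper. Both write $Q_{A_1}^k(\alpha_d)=\zeta_d^{-k}(1-\zeta_d^{2(k+1)})/(1-\zeta_d^2)$ and use the gcd-condition to show that the numerator $1-\zeta_d^{2(k+1)}$ behaves like $1-\zeta_d^{4}$ does for $\alpha_d$.

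The paper works case by case ($d=2^a$, $d=4l^a$, $d$ odd, $d=2m$ with $m$ odd). You instead prove the single identity $\gcd(d,2(k+1))=\gcd(d,4)$. This makes $1-\zeta_d^{2(k+1)}$ a Galois conjugate of $1-\zeta_d^{4}$, so $N_{\Q}^{\Q(\alpha_d)}(Q_{A_1}^k(\alpha_d))=\pm N_{\Q}^{\Q(\alpha_d)}(\alpha_d)$, and everything reduces to Theorem~\ref{thm:A1norminput}. That is cleaner. It also covers divisors such as $d=24$ (e.g.\ $q=73$, $k=9$) or $d=60$ (e.g.\ $q=61$, $k=3$), which have $4\mid d$ but are neither $2^a$ nor $4l^a$, and which the paper's list does not treat explicitly.

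One intermediate claim is false: $\gcd(d,k+1)\mid 2$ fails for $q=13$, $k=3$, $d=12$, where the gcd is $4$. You never need it, and the ``main obstacle'' you anticipate disappears. For $d\mid q\pm1$ you have $\gcd(d,2(k+1))\mid \gcd(q\pm1,2(k+1))=2\gcd(\tfrac{q\pm1}{2},k+1)\in\{2,4\}$, so $\gcd(d,2(k+1))\mid\gcd(d,4)$. The reverse divisibility follows from $4\mid 2(k+1)$. No $2$-adic case split is required.
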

\begin{proof}
The cases where $d$ equals $1$ or $2$ are obvious by (\ref{eq:QA1functionaltrigo}) as $\alpha_1=2$ and $\alpha_2=-2$, respectively. The case $d=4$ is also trivial as $\alpha_4=0$ is a zero of the polynomial $Q_{A_1}^k$ since $k$ is odd. For the remaining cases, we shall prove 
$$
N_{\Q}^{\Q(\zeta_d)}\left(Q_{A_1}^k(\alpha_d)\right) = 
\begin{cases}
	l^2 & \text{if } d = 4l^a\text{ for some }l\in\mathbb{P}, a\in\Z^{+},\\
	1 & \text{otherwise}.
\end{cases}
$$
in light of (\ref{eq:norminextension}). The functional equation that defines $Q_{A_1}^k$, namely \eqref{eq:QA1functionaltrigo}, allows us to write 
$$Q_{A_1}^k(\alpha_d)=\dfrac{\zeta_d^{k+1}-\zeta_d^{-(k+1)}}{\zeta_d-\zeta_d^{-1}}=\zeta_d^{-k}\left(\dfrac{1-\zeta_d^{2(k+1)}}{1-\zeta_d^2}\right).
$$
Using the multiplicativity of the norm and the fact that $\zeta_d^{-k}$ is a unit, we obtain the following equality
$$N_{\Q}^{\cycl{d}}(Q_{A_1}^k(\alpha_d))=\pm \frac{N_{\Q}^{\cycl{d}}\left(1-\zeta_d^{2(k+1)}\right)}{N_{\Q}^{\cycl{d}}\left(1-\zeta_d^2\right)}.
$$
We want to utilize Proposition~\ref{prop:norm}. We rely on the gcd-condition in the hypothesis to restrict our attention to rather limited cases. Depending on $q$ modulo $4$, the gcd-condition becomes one of the following
	$$
	\begin{cases}
		\gcd\left(\frac{q-1}{2}, k+1\right) = 2 \text{ and } \gcd\left(\frac{q+1}{2}, k+1\right) = 1 &\text{if } q\equiv1\pmod4,\\
		\gcd\left(\frac{q-1}{2}, k+1\right) = 1 \text{ and } \gcd\left(\frac{q+1}{2}, k+1\right) = 2 &\text{if } q\equiv3\pmod4.
	\end{cases}
	$$
As in the hypothesis, suppose that $d$ is a divisor of $q-1$ or $q+1$. We remark that this choice is compatible with the one-to-one correspondence \eqref{eq:one-to-one}, and we will eventually focus on $\alpha_d\in \Fix(P_{A_1}^q)$. 

We repeat the ideas in the proof of Theorem~\ref{thm:A1norminput}, emphasizing the differences between the expressions
\[ \frac{1-\zeta_d^4}{1-\zeta_d^2}, \quad \text{ and } \quad \dfrac{1-\zeta_d^{2(k+1)}}{1-\zeta_d^2}. \]

Suppose that $d=2^a$ with $a\ge 3$. Then the numerator comes with the power $2(k+1)$ instead of $4$. However, the gcd-condition implies that $1-\zeta_{2^a}^{2(k+1)}$ and  $1-\zeta_{2^{a-2}}$ are conjugates of each other. The rest of the proof is the same.  

Suppose that $d=4l^a$ for some odd prime $l$ with $a \ge 1$. Then the numerator is $1-\zeta_{4l^a}^{2(k+1)} = 1-\zeta_{l^a}^{(k+1)/2}$. Using Proposition~\ref{prop:norm} and the gcd-condition, we conclude that the numerator has norm $\pm l$ in the extension $\cycl{l^a}/\Q$. The rest of the proof is the same.

If $d>1$ is odd, then as $d$ is a divisor of $(q-1)/2$ or $(q+1)/2$, and the gcd-condition ensures that $d$ and $k+1$ are coprime. So, the numerator $1-\zeta_d^{2(k+1)}$ and the denominator $1-\zeta_d^2$ have the same norm as $1-\zeta_d$ by Proposition~\ref{prop:norm}.
	
If $d=2m$ with $m>1$ odd, then as $m$ is a divisor of $(q-1)/2$ or $(q+1)/2$, and the gcd-condition ensures that $m$ and $k+1$ are coprime. So, the numerator $1-\zeta_d^{2(k+1)}=1-\zeta_m^{k+1}$ and and the denominator $1-\zeta_d^4=1-\zeta_m^2$ have the same norm as of $1-\zeta_m$ by Proposition~\ref{prop:norm}.
\end{proof}

The following theorem is the last step before we prove our main result in this section. 

\begin{theorem}\label{thm:A1outputprod}
Let $k\geq 3$ be an odd integer and $q$ be a power of an odd prime $p$. Suppose that $p\nmid(k+1)$, and 
$$\gcd\left(\frac{q-1}{2}, k+1\right) \cdot \gcd\left(\frac{q+1}{2}, k+1\right) = 2.$$
Then, we have
$$
\prod\limits_{\alpha\in\Fix(P_{A_1}^q) \setminus\{0\}} Q_{A_{1}}^{k}(\alpha) = \pm\left(\frac{k+1}{2}\right)^{2} \left((-1)^{(q-1)/2}q-1\right).$$
\end{theorem}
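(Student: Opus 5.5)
We decompose the set $\Fix(P_{A_1}^q)\setminus\{0\}=(\mc{A}_q\cup\mc{B}_q)\setminus\{0\}$ into Galois orbits and then multiply orbit norms. Every element of $\mc{A}_q\cup\mc{B}_q$ has the form $2\cos(2\pi c/m)$ with $m\in\{q-1,q+1\}$. Hence it equals $\alpha_d$ or one of its Galois conjugates, where $d$ is the exact denominator of $c/m$ in lowest terms, so $d$ divides $q-1$ or $q+1$. The conjugates of $\alpha_d$ over $\Q$ are the numbers $2\cos(2\pi j/d)$ with $\gcd(j,d)=1$, taken modulo $j\mapsto -j$. Consequently $\Fix(P_{A_1}^q)$ is the disjoint union, over the distinct divisors $d$ of $q-1$ or $q+1$, of the full sets of conjugates of $\alpha_d$. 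The divisors $d=1,2$ are common to both $q-1$ and $q+1$, which matches $\mc{A}_q\cap\mc{B}_q=\{-2,2\}$. The element $0=\alpha_4$ accounts for exactly the orbit $d=4$, and $4$ divides exactly one of $q\pm1$. Since $Q_{A_1}^k$ has integer coefficients, it commutes with the Galois action. Therefore
\[
\prod_{\alpha\in\Fix(P_{A_1}^q)\setminus\{0\}}Q_{A_1}^k(\alpha)=\prod_{d}N_{\Q}^{\Q(\alpha_d)}\bigl(Q_{A_1}^k(\alpha_d)\bigr),
\]
where $d$ runs over the distinct divisors of $q-1$ or $q+1$ other than $4$. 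The same decomposition applied to the identity map gives $\prod_d N_{\Q}^{\Q(\alpha_d)}(\alpha_d)=c_1=(-1)^{(q-1)/2}q-1$ by \eqref{eq:prodfixpoints}, with $d$ running over the same index set.

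Next, we compare the two products factor by factor using Theorem~\ref{thm:A1norminput} and Theorem~\ref{thm:A1normQ}; the hypotheses of the latter are exactly those assumed here. For $d=1$ and $d=2$ the factors are $2,-2$ in the first product and $k+1,(-1)^k(k+1)=-(k+1)$ in the second, since $k$ is odd. So these two divisors contribute $-4$ to $c_1$ and $-(k+1)^2$ to our product. For every other admissible $d$, both theorems give the same value up to sign: $\pm l$ when $d=4l^a$, and $\pm1$ otherwise. Hence
\[
\prod Q_{A_1}^k(\alpha)=\pm\frac{(k+1)^2}{4}\,c_1=\pm\Bigl(\frac{k+1}{2}\Bigr)^2\bigl((-1)^{(q-1)/2}q-1\bigr).
\]
Here we use that $k+1$ is even, and that $c_1$ is divisible by $4$, as noted in Remark~\ref{rm:coef}.

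The main obstacle is the bookkeeping in the orbit decomposition. We must check that each orbit is counted exactly once: $d=1,2$ lie in both $\mc{A}_q$ and $\mc{B}_q$ but should be counted once, and for the other $d$ the sets of divisors of $q-1$ and of $q+1$ share nothing, since $\gcd(q-1,q+1)=2$. We must also check that the norm over $\Q(\alpha_d)$ is the product over precisely those conjugates that lie in $\Fix(P_{A_1}^q)$. For $d>2$ the map $j\mapsto 2\cos(2\pi j/d)$ is two-to-one on units modulo $d$, so the orbit of $\alpha_d$ has $\varphi(d)/2=[\Q(\alpha_d):\Q]$ elements, as required. The signs need not be tracked, because the statement is only claimed up to $\pm$.
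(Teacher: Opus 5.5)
Your proposal is correct and follows the paper's own route: split the nonzero fixed points into Galois orbits of the $\alpha_d$ (with $d\mid q\pm1$, $d\neq 4$), write both products as products of norms, compare them factor by factor using Theorems~\ref{thm:A1norminput} and~\ref{thm:A1normQ}, and finish with the value $c_1=(-1)^{(q-1)/2}q-1$. Your bookkeeping is slightly more explicit than the paper's, which only asserts the resulting ratio $\pm((k+1)/2)^2$: you count each orbit once, note that the orbit of $\alpha_d$ has $[\Q(\alpha_d):\Q]$ elements, and compute the $d=1,2$ contributions ($-4$ versus $-(k+1)^2$).
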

\begin{proof}
Any element of $\Fix(P_{A_1}^q)\setminus\{0\}$ is a conjugate of $\alpha_d$ for some divisor $d$ of $q-1$ or $q+1$. Moreover, all conjugates of $\alpha_d$ are included in the number field $\Q(\alpha_d)$. This is a direct consequence of the one-to-one correspondence given by \eqref{eq:one-to-one}. 

Being a conjugate is an equivalence relation. This gives a partition of $\Fix(P_{A_1}^q)$, and we choose a natural representative for each equivalence class, namely $\alpha_d$. Let $S$ be the subset of positive integers $1\leq d \leq q+1$ such that $\alpha_d \in \Fix(P_{A_1}^q)$. The set $S$ consists of divisors of $q-1$ or $q+1$ by \eqref{eq:one-to-one}. We have the following partition induced by the conjugacy equivalence relation 
$$ \Fix(P_{A_1}^q) = \bigcup_{d\in S} [\alpha_d]$$
The equivalence class of $\alpha_4=0$ has a single element since $\alpha_4\in\Q$. Symbolically, we have $[\alpha_4] = \{0\}$. It follows that 
$$\prod\limits_{\alpha\in\Fix(P_{A_1}^q)\setminus\{0\}}\alpha = \prod\limits_{d\in S\setminus\{4\}}  N_{\Q}^{\Q(\alpha_d)}(\alpha_d)$$
Repeating a similar computation, we also obtain
$$ \prod\limits_{\alpha\in\Fix(P_{A_1}^q)\setminus\{0\}}  Q_{A_{1}}^{k}(\alpha)=\prod\limits_{d\in S\setminus\{4\}}  N_{\Q}^{\Q(\alpha_d)}\left(Q_{A_1}^k(\alpha_d)\right).$$
The right-hand sides of the above products can be related to each other by using Theorem~\ref{thm:A1norminput} and Theorem~\ref{thm:A1normQ} together. More precisely, we have 
$$\prod\limits_{d\in S\setminus\{4\}}  N_{\Q}^{\Q(\alpha_d)}\left(Q_{A_1}^k(\alpha_d)\right)=\pm\left(\frac{k+1}{2}\right)^{2} \prod\limits_{d\in S\setminus\{4\}} N_{\Q}^{\Q(\alpha_d)}(\alpha_d).$$
The product of nonzero elements of $\Fix(P_{A_1}^q)$ is equal to $(-1)^{(q-1)/2}q-1$ by \eqref{eq:prodfixpoints}. This finishes the proof.
\end{proof}

We are ready now ready to exhibit a proof for the main result of this section

\begin{proof}[Proof of Theorem~\ref{thm:A1main}]
Suppose that $k>1$ is odd. Let $q$ be a power of an odd prime $p$. We claim that for any $p>((k+1)/2)^2+1$, the map $Q^k_{A_1}:\F_q\rightarrow\F_q$ is not a permutation. If $p\mid (k+1)$, then $Q_{A_1}^k$ does not induce a permutation. Assume otherwise, and consider the following gcd-condition
$$\gcd\left(\frac{q-1}{2}, k+1\right) \cdot \gcd\left(\frac{q+1}{2}, k+1\right) = 2.$$
If this gcd-condition does not hold, then the map $Q^k_{A_1}(x):\F_q \rightarrow \F_q$ is not a permutation since the inverse image of $\{0\}$ contains more than one element by Corollary~\ref{cor:A1invzero}. If this gcd-condition holds, then we can use Theorem~\ref{thm:A1outputprod}. Suppose that $p$ is a prime such that $p>((k+1)/2)^2+1$. Therefore
$((k+1)/2)^2\not\equiv \pm 1\pmod{p}.$
It follows by Theorem~\ref{thm:A1outputprod} that
$$\prod\limits_{\alpha\in\F_q\setminus\{0\}} Q_{A_{1}}^{k}(\alpha)\not\equiv \pm 1 \pmod{p}.$$
This computation gives the $x$-coefficient of the following polynomial
$$\prod\limits_{\alpha\in\F_q}\left(x-Q_{A_{1}}^{k}(\alpha)\right)$$
up to a plus or a minus sign. Finally, we conclude that 
$$\prod\limits_{\alpha\in\F_q} \left(x-Q_{A_{1}}^{k}(\alpha) \right)\not\equiv x^q-x\pmod p.$$
Thus, the map $Q^k_{A_1}:\F_q\rightarrow\F_q$ is not a permutation.

If $k$ is even, then we recall that $Q^k_{A_1}(x) = h(x^2)$ for some polynomial $h(x)\in\Z[x]$. It follows that the polynomial $Q^k_{A_1}(x):\F_q \rightarrow \F_q$ does not induce a permutation if $p\ ge 3$. Suppose $k>1$ is a fixed odd integer. There are only finitely many primes $p$ such that the inequality $p \leq ((k+1)/2)^2+1$ holds. We conclude that $Q_{A_1}^k$ is not exceptional for $k>1$.
\end{proof}

\section{The Main Result}
Let $\mf{g}$ be a semi-simple Lie algebra $\mf{g}$ of rank $n$. Recall that we use the notation $\Q_{\mf{g}}^k:\C^n \rightarrow \C^n$, for the generalized Chebyshev polynomials of the second kind. These polynomials are defined through exponential invariants of Bourbaki, and have integer coefficients. To our knowledge, these polynomials $Q^k_{\mf{g}}$, with $\mf{g}\neq A_1$, have not been studied in the theory of finite fields before this work. See \cite{hubertsinger2022}, and the references therein, for some applications in numerical analysis.

The classical Chebyshev polynomials of the first and second kinds, i.e. $P_{A_1}^k$ and $Q_{A_1}^k$, satisfy the same recursive relation with different initial conditions, see \eqref{eq:PQA1recursions}. This slight difference results in distinct behavior in terms of arithmetic exceptionality. The polynomials $P_{A_1}^k$ are examples of exceptional polynomials for infinitely many $k$. On the other hand, $Q_{A_1}^k$ is not exceptional for $k>1$.

In this paper, we show that there is a similar situation in the rank two case. The bivariate polynomial families $P_{A_2}^k$ and $Q_{A_2}^k$ satisfy the same recursive relation with different initial conditions. See \eqref{eq:recrelPA2}, and Theorem~\ref{thm:A2recursion}. Moreover, the polynomials $P_{A_2}^k$ are examples of exceptional polynomials for infinitely many values of $k$, see Theorem~\ref{thm:PA2expceptional}.

In this section, we will provide a criterion that concludes $Q_{A_2}^k: \F_q^2 \rightarrow \F_q^2$ is not a permutation if $p$ is sufficiently large compared to a certain function of $k$. More precisely, we will show the following.
\begin{theorem}\label{thm:A2main}
Let $k>1$ be an integer. Let $q$ be a power of an odd prime $p$. Let $\tau$ be the function defined by \eqref{eq:tau}. If $Q_{A_2}^k:\F_q^2 \rightarrow \F_q^2$ is a permutation, then $p<|\tau(k)|+1$. Moreover, $Q_{A_2}^k$ is not arithmetically exceptional.
\end{theorem}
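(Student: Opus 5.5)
The plan is to reduce the bivariate question to a univariate one on the diagonal $y_1=y_2$, and then rerun the norm argument of Section~\ref{section:A1}.

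\textbf{Step 1: restriction to the diagonal.} By \eqref{eq:QA2symmetry}, $Q_{A_2}^k(y,y)=(h_k(y,y),h_k(y,y))$, so $Q_{A_2}^k$ maps the diagonal $\Delta=\{(y,y):y\in\F_q\}$ into itself. If $Q_{A_2}^k$ permutes the finite set $\F_q^2$, it is injective on $\Delta$, and hence it permutes $\Delta$. Therefore the univariate polynomial $R_k(x):=h_k(x+1,x+1)$ must permute $\F_q$.

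To identify $R_k$, evaluate the exponential sums \eqref{eq:y_1andy_2} at a torus point with $e^{\omega_1}=t$, $e^{-\omega_1+\omega_2}=1$, $e^{-\omega_2}=t^{-1}$. Then $y_1=y_2=t+1+t^{-1}$. The character $\chi_{k\omega_1}$ is the complete homogeneous symmetric polynomial of degree $k$ in the eigenvalues $t,1,t^{-1}$. Summing a geometric-type series, I expect
\[
R_k\!\left(t+t^{-1}\right)=\sum_{m=0}^{k}Q_{A_1}^m\!\left(t+t^{-1}\right)=t^{-k}\,\frac{(1-t^{k+1})(1-t^{k+2})}{(1-t)(1-t^{2})}.
\]
This identity is checked directly from \eqref{eq:QA1functionalformal}, or by induction using Theorem~\ref{thm:A2recursion}. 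This is where the correspondence \eqref{eq:one-to-one} for $A_1$, rather than for $A_2$, becomes the useful one.

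\textbf{Step 2: the fibre over zero.} Mimicking Theorem~\ref{thm:invimageofzero}, count solutions of $R_k(x)=0$ in $\F_q$ by intersecting $\mc{A}_q\cup\mc{B}_q$ with the cosines $2\cos(2\pi u)$ for which $t=e^{2\pi iu}$ is a root of $(1-t^{k+1})(1-t^{k+2})$ but not of $(1-t)(1-t^2)$. The primes $p$ dividing $(k+1)(k+2)$ need a separate argument, as in Section~\ref{section:A1}, for example via the values $R_k(\pm 2)$. Requiring the fibre to have at most one element then forces gcd-conditions between $q\pm1$ and $k+1$, $k+2$, analogous to Corollary~\ref{cor:A1invzero}. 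If these conditions fail, $R_k$ is not a permutation and we are done.

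\textbf{Step 3: the main obstacle, a norm computation.} Under the gcd-conditions, I would compute $N_{\Q}^{\Q(\alpha_d)}\bigl(R_k(\alpha_d)\bigr)$ for each divisor $d$ of $q\pm1$, as in Theorem~\ref{thm:A1normQ}. Write
\[
R_k(\alpha_d)=\zeta_d^{-k}\,\frac{(1-\zeta_d^{k+1})(1-\zeta_d^{k+2})}{(1-\zeta_d)(1-\zeta_d^{2})},
\]
and compare it, via Proposition~\ref{prop:norm}, with $\alpha_d=\zeta_d^{-1}(1-\zeta_d^4)/(1-\zeta_d^2)$. The plan is to show that, outside the special values $d\in\{1,2,4\}$ and a few prime-power levels dictated by the gcd-conditions, the ratio of the two norms is a unit. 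Here $d=4$ must be handled because $\alpha_4=0$ need not be a root of $R_k$; in that case one uses $0\in\Fix$ differently.

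The special levels contribute an explicit integer factor. For example, $d=1$ gives $R_k(2)=(k+1)(k+2)/2$, and $d=2$ gives $R_k(-2)$, which is $\pm$ an explicit quantity of size about $k/2$. Collecting these factors defines $\tau(k)$, and yields, as in Theorem~\ref{thm:A1outputprod},
\[
\prod_{\alpha\in\Fix(P_{A_1}^q)\setminus\{0\}}R_k(\alpha)=\pm\,\tau(k)\left((-1)^{(q-1)/2}q-1\right).
\]
The delicate part is the bookkeeping of the parity of $k$: exactly one of $k+1,k+2$ is even, so the $2$-power and $4l^a$ cases split by parity. I would treat $k$ odd and $k$ even separately here.

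\textbf{Step 4: conclusion.} Reducing modulo $\mf{p}$, the coefficient of $x$ in $\prod_{\alpha\in\F_q}(x-R_k(\alpha))$ must be $\equiv -1$, as for $x^q-x$. This forces $\tau(k)\equiv\pm1\pmod p$, so $p\le|\tau(k)|+1$. Since $\tau(k)$ depends only on $k$, for fixed $k>1$ only finitely many primes remain, and $Q_{A_2}^k$ is not arithmetically exceptional.
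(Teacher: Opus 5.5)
Your set-up matches the paper's: $R_k(x)=f_k(x+1)$ with $f_k(x)=h_k(x,x)$, so $R_k(\alpha_d)=f_k(\beta_d)$ with $\beta_d=1+\alpha_d$, and your closed form is exactly Theorem~\ref{thm:fkbeta}. The gap is in Steps~3--4, where you carry over the $A_1$ template too literally.

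The point to delete from the product is the zero of $R_k$, not $\alpha_4=0$. By Theorem~\ref{thm:A2output012}, $R_k(-1)=f_k(0)=0$ whenever $3\nmid k$. Also $-1=\alpha_3$ lies in $\Fix(P_{A_1}^q)$ for every $p\ge 5$. Hence your product $\prod_{\alpha\in\Fix(P_{A_1}^q)\setminus\{0\}}R_k(\alpha)$ equals $0$. So the claimed identity with $\pm\tau(k)\bigl((-1)^{(q-1)/2}q-1\bigr)$ is false. It is also not the $x$-coefficient you use in Step~4; that coefficient is $\prod_{\alpha\ne-1}R_k(\alpha)$. Conversely, $R_k(0)=f_k(1)=1$ in every case that survives, so ``using $0\in\Fix$ differently'' has nothing to act on.

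The level-by-level comparison with $\alpha_d$ also fails. $N\bigl(R_k(\alpha_d)\bigr)$ is non-trivial at $d=3l^a$, because $3\mid k+i$ turns $1-\zeta_d^{k+i}$ into $1-\zeta_{l^a}^{(k+i)/3}$. By contrast, $N(\alpha_d)$ is non-trivial at $d=4l^a$. Which of these levels occur depends on $q$. So the ratio is not $\pm\tau(k)$ times units away from finitely many $k$-dependent levels.

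What is missing is the paper's preliminary step. The values $R_k(-1),R_k(0),R_k(1)$, i.e.\ $f_k(0),f_k(1),f_k(2)$, rule out all $k\not\equiv 1,8\pmod{12}$. In the surviving cases the forced zero is $R_k(-1)=0$, while $R_k(0)=1$ and $R_k(1)=2$. One then works with the translated set $1+\Fix(P_{A_1}^q)$ and removes $\beta_3=0$. Next one compares $N\bigl(f_k(\beta_d)\bigr)$ with $N(\beta_d)$ (Theorem~\ref{thm:A2norminput}); both are non-trivial exactly at $d=3l^a$. This yields $\pm\tau(k)(q\pm1)$ with $6\mid q\pm1$, whence $\tau(k)\equiv\pm1\pmod p$.

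The mod-$12$ restriction is also what makes the norm bookkeeping uniform. It guarantees $3\nmid k$, that the one of $k+1,k+2$ divisible by $3$ is odd, and that the other is $\equiv 2\pmod 4$. Splitting only by the parity of $k$ leaves cases such as $4\mid k+1$ or $3\mid k$, where the zero moves and the pattern of non-trivial norms changes.

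Finally, for $p\mid(k+1)(k+2)$ the values $R_k(\pm2)$ alone need not give two zeros. For $k$ even and $p\mid k+1$, $R_k(-2)=(k+2)/2\ne0$. The paper instead pairs $R_k(2)=0$ with the forced zero $R_k(-1)=0$.
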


To establish exceptionality, one needs to know if the map $Q_{A_2}^k:\F_p^2\rightarrow\F_p^2$ yields permutations for infinitely many primes $p$. In this section, we use the correspondence $\F_q \leftrightarrow  \Fix(P_{A_1}^q)$ given by \eqref{eq:one-to-one}. Our main strategy is to reduce the problem to a one-dimensional setting and focus on the diagonal of $\F_q^2$. For this purpose, we start with the following definition.
\begin{definition}\label{def:diag}
We define $f_k(x):=h_k(x,x)$, and call it the \textit{diagonal restriction} of $Q^k_{A_2}$, by choosing $y_1=y_2=x$ in Example~\ref{ex:QA2}. There is a recursive relation for $f_k$ given by Theorem~\ref{thm:A2recursion}. More precisely, $f_0=1,f_1=x$, and $f_2=x^2-x$ and
$$f_k=xf_{k-1}-xf_{k-2}+f_{k-3}, \text{ for }k\ge3.$$
\end{definition}

\begin{remark}\label{rem:diag}
Consider the diagonal $D=\{(a,a)\mathrel{|} a\in\F_q\}$ of $\F_q^2$. We have $Q^k_{A_2}(D) \subseteq D$ by \eqref{eq:QA2symmetry}. This allows us to consider the restricted map $f_k: D \rightarrow D$. If $Q^k_{A_2}$ permutes $\F_q^2$, then its diagonal restriction $f_k$ permutes $\F_q$. Equivalently, if $f_k$ does not permute $\F_q$, then $Q^k_{A_2}$ does not permute $\F_q^2$.
\end{remark}

We start with collecting some information about the polynomials $f_k$.

\begin{theorem}\label{thm:A2output012} Let $k\ge 0$ be an arbitrary integer. Then we have
	\begin{align*}
		f_k(0) &= 
		\begin{cases} 
			1 & \text{if } k \equiv 0\pmod3, \\
			0 & \text{if } k \equiv 1,2\pmod3,
		\end{cases}\\
		f_k(1) &= 
		\begin{cases} 
			1 & \text{if } k \equiv 0,1\pmod4, \\
			0 & \text{if } k \equiv 2,3\pmod4,
		\end{cases}\\
		f_k(2) &= 
		\begin{cases} 
			1 & \text{if } k \equiv 0,3\pmod6, \\
			2 & \text{if } k \equiv 1,2\pmod6, \\
			0 & \text{if } k \equiv 4,5\pmod6.
		\end{cases}
	\end{align*}
\end{theorem}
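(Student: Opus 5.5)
For each fixed $x\in\{0,1,2\}$, the recursion of Definition~\ref{def:diag} turns $f_k(x)$ into an integer linear recurrence of order three. So the plan is to compute the first few values, detect the period, and then prove periodicity by induction on $k$. Since the recurrence $f_k=xf_{k-1}-xf_{k-2}+f_{k-3}$ is determined by three consecutive values, it suffices to show that the triple $(f_m(x),f_{m+1}(x),f_{m+2}(x))$ returns to $(f_0(x),f_1(x),f_2(x))=(1,x,x^2-x)$ at $m=$ the claimed period. Once that holds, the whole sequence is periodic with that period.

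For $x=0$ the recurrence reduces to $f_k=f_{k-3}$ with initial values $1,0,0$. This immediately gives period $3$ and the first formula.

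For $x=1$ the recurrence is $f_k=f_{k-1}-f_{k-2}+f_{k-3}$ with initial values $1,1,0$. Computing gives $f_3=0-1+1=0$, $f_4=0-0+1=1$, $f_5=1-0+1$… wait, more carefully: $f_5=f_4-f_3+f_2=1-0+0=1$, and $f_6=f_5-f_4+f_3=1-1+0=0$. Thus the triple $(f_4,f_5,f_6)=(1,1,0)$ equals $(f_0,f_1,f_2)$, so the period is $4$ and the pattern is $1,1,0,0$. This matches the cases $k\equiv 0,1$ and $k\equiv 2,3\pmod 4$.

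For $x=2$ the recurrence is $f_k=2f_{k-1}-2f_{k-2}+f_{k-3}$ with initial values $1,2,2$. The next values are $f_3=4-4+1=1$, $f_4=2-4+2=0$, $f_5=0-2+2=0$, $f_6=0-0+1=1$, $f_7=2-0+0=2$, and $f_8=4-2+0=2$. So $(f_6,f_7,f_8)=(1,2,2)$, giving period $6$ with pattern $1,2,2,1,0,0$, exactly as stated.

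An alternative, more conceptual check is available through characteristic polynomials. These are $t^3-1$, $t^3-t^2+t-1=(t-1)(t^2+1)$ and $t^3-2t^2+2t-1=(t-1)(t^2-t+1)$, whose roots are roots of unity of orders dividing $3$, $4$ and $6$ respectively. This explains the periods and is consistent with the diagonal $y_1=y_2$ corresponding to torus elements of finite order. However, the direct computation above is already a complete proof.

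There is no genuine obstacle here. The only care needed is to verify the initial values $f_0,f_1,f_2$ from $h_0,h_1,h_2$ in Theorem~\ref{thm:A2recursion}, and to note that the recursion is valid for $k\ge 3$, so the induction starts correctly.
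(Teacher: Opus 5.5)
Your proof is correct and follows the paper's own argument: read off $f_0,f_1,f_2$ from the initial conditions, then iterate the order-three recurrence until the triple $(f_m,f_{m+1},f_{m+2})$ returns to $(f_0,f_1,f_2)$, which yields periods $3$, $4$ and $6$ with the stated patterns. Your explicit check that the triple recurs is in fact a little more careful than the paper's sketch, which misstates $f_4(1)=0$ (the correct value is $f_4(1)=1$). The characteristic-polynomial remark is a nice consistency check but is not needed. You should also delete the stray self-correction ``$f_5=1-0+1$\ldots'' from your write-up.
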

\begin{proof}
We start the proof with the first case. Putting $x=0$, we obtain $f_0(0)=1$, $f_1(0)=0$, and $f_2(0)=0^2-0=0$. Moreover
$$f_k(0)=f_{k-3}(0), \text{ for }k\ge3.$$
The result now follows easily. 

Secondly, let us consider the case with $x=1$. We have $f_0(1)=1$, $f_1(1)=1$, and $f_2(1)=1^2-1=0$. Moreover
$$f_k(1)=f_{k-1}(1)-f_{k-2}(1)+f_{k-3}(1), \text{ for }k\ge3.$$
It follows that $f_4(1)=0$ as claimed. It is straightforward to verify that the general pattern holds modulo four. 

Finally, we consider the case with $x=2$.  We have $f_0(2)=1$, $f_1(2)=2$, and $f_2(2)=2^2-2=2$. Moreover
$$f_k(2)=2f_{k-1}(2)-2f_{k-2}(2)+f_{k-3}(2), \text{ for }k\ge3.$$
It can be verified that the pattern $1,2,2,1,0,0$ keeps repeating.
\end{proof}

A direct consequence of this theorem is the elimination of certain values of $k$ from the list of permutation polynomials. More precisely, we have the following result.

\begin{corollary}\label{cor:formernotperm}
Let $q$ be a power of an odd prime. If $k\not\equiv 1,8 \pmod{12}$, then the map $f_k:\F_q\rightarrow\F_q$ is not a permutation. 
\end{corollary}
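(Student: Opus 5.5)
The plan is to use Theorem~\ref{thm:A2output012} directly: a map $f_k:\F_q\to\F_q$ cannot be a permutation if two distinct elements of $\F_q$ have the same image. Since $q$ is a power of an odd prime, $p\ge 3$, so the elements $0,1,2$ of $\F_q$ are pairwise distinct. Moreover, the values $0,1,2\in\Z$ listed in Theorem~\ref{thm:A2output012} are integers, so any equality between them in $\Z$ persists after reduction modulo $p$. Thus it suffices to show that whenever $k\not\equiv 1,8\pmod{12}$, at least two of $f_k(0),f_k(1),f_k(2)$ coincide as integers.

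Since $f_k(0)$ depends on $k\bmod 3$, $f_k(1)$ on $k\bmod 4$, and $f_k(2)$ on $k\bmod 6$, the triple $(f_k(0),f_k(1),f_k(2))$ depends only on $k\bmod 12$. I will tabulate it for the twelve residues $r=0,\dots,11$:
\begin{align*}
&r=0:(1,1,1),\quad r=1:(0,1,2),\quad r=2:(0,0,2),\quad r=3:(1,0,1),\\
&r=4:(0,1,0),\quad r=5:(0,1,0),\quad r=6:(1,0,1),\quad r=7:(0,0,2),\\
&r=8:(0,1,2),\quad r=9:(1,1,1),\quad r=10:(0,0,0),\quad r=11:(0,0,0).
\end{align*}
I will double-check each entry against the three case rules. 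Only $r=1$ and $r=8$ give three distinct values. In every other case, there are two distinct inputs among $0,1,2$ with equal outputs in $\F_q$, so $f_k$ is not injective and therefore not a permutation of $\F_q$.

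There is no real obstacle here. The only care needed is the bookkeeping of the residues, and noting that $p$ odd is used only to guarantee that $0,1,2$ are distinct in $\F_q$. The case $p=2$ would break this, since $2=0$ there.
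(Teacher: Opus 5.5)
Your proposal is correct and is essentially the paper's proof. The paper also tabulates $(f_k(0),f_k(1),f_k(2))$ over $k \bmod 12$ using Theorem~\ref{thm:A2output012}, and your table agrees with it entry by entry. It then concludes that two of $0,1,2$ share an image unless $k\equiv 1,8\pmod{12}$. You additionally state two points the paper leaves implicit: $p$ odd keeps $0,1,2$ distinct in $\F_q$, and equalities of integer values survive reduction mod $p$ because $f_k\in\Z[x]$.
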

\begin{proof} Thanks to the previous theorem, we have the following table:
\[\begin{array}{|c|c|c|c|c|c|c|c|c|c|c|c|c|}\hline
	k\pmod{12} & 0 & \bf{1} & 2 & 3 & 4 & 5 & 6 & 7 & \bf{8} & 9 & 10 & 11 \\ \hline
	f_k(0)& 1 & \bf{0} & 0 & 1 & 0 & 0 & 1 & 0 & \bf{0} & 1 & 0 & 0 \\ \hline
	f_k(1)& 1 & \bf{1} & 0 & 0 & 1 & 1 & 0 & 0 & \bf{1} & 1 & 0 & 0 \\ \hline
	f_k(2)& 1 & \bf{2} & 2 & 1 & 0 & 0 & 1 & 2 & \bf{2} & 1 & 0 & 0 \\ \hline
\end{array}\]
The conclusion is now obvious.
\end{proof}

In the previous section, the values $P_{A_1}(2)=k+1$ and  $P_{A_1}(2)=(-1)^k(k+1)$ have been very important while studying arithmetic exceptionality. We aim to give an analogous result in the $A_2$-case. For this purpose, we need a better understanding of the polynomials $f_k$.

Recall that $Q^k_{A_2}(z_1,z_2)=(\chi_{k\omega_1},\chi_{k\omega_2})$. Moreover we have $z_1=y_1=S(e^{\omega_1})$ and $z_2=y_2=S(e^{\omega_2})$. One may consider formal exponential sums as complex-valued functions by putting $e^\lambda(\gamma)\mapsto e^{-2\pi i(\lambda,\gamma)}$ as in \cite[Lemma~4.1]{hoffmanwithers1988}. For this purpose, we fix some coordinates associated with the coroots. Suppose that $\gamma =u_1\alpha_1^\vee +u_2\alpha_2^\vee$. Then the components of the generalized cosine function are given by
\begin{align*}
	y_1(\gamma)&=e^{-2\pi iu_1}+e^{2\pi iu_1-2\pi iu_2}+e^{2\pi iu_2},\\
	y_2(\gamma)&=e^{2\pi iu_1}+e^{-2\pi iu_1+2\pi iu_2}+e^{-2\pi iu_2}.
\end{align*}
Note that $y_1(\gamma)$ and $y_2(\gamma)$ are complex conjugates of each other. The definition of $f_k$ requires $y_1=y_2$, and this is possible if and only if $u_1=u_2$. We set $u=u_1=u_2$, and consider $\gamma =u\alpha_1^\vee +u\alpha_2^\vee$. This allows us to focus on the cosine values forming the diagonal terms. More precisely, we have
\begin{equation}\label{eq:diag}
	y_1(\gamma)=y_2(\gamma)=1+e^{2\pi iu}+e^{-2\pi iu}=e^{-2\pi iu}\frac{1-e^{2\pi i3u}}{1-e^{2\pi iu}}.
\end{equation}

Recall that we shall plug in $u=0$, once the cancellations are done. More precisely we have $y_1(0)=3$. The choice $u_1=u_2=u$ brings a natural connection with the rank one case. The following type of elements will be used extensively in the rest of the paper
\begin{equation*}
\beta_d:=1+\alpha_d=1+\zeta_d + \zeta_d^{-1}\in 1+\Fix(P_{A_1}^q).
\end{equation*}

Our main purpose is to understand the image of the diagonal
\[D=\{ \mb{y}(u\alpha_1^\vee +u\alpha_2^\vee) \mathrel{|} u\in\Q \},\]
under the map $Q_{A_2}^k:\C^2 \rightarrow \C^2$, and its reduction modulo a certain prime ideal. Applying $Q_{A_2}^k$ to the generalized cosine function, we obtain various sums of three terms. However, summations are not well-behaved under the norm map. That's why we look for a product expression. The element $J(e^\rho)$ has a natural product decomposition where the factors are related to the positive roots \cite[VI $\S$3 Proposition 2.(i)]{bourbaki2002}. In our case, the positive roots are $\alpha_1,\alpha_2$, $\alpha_1+\alpha_2$, and we have
\begin{align*}
	J(e^{\rho})&=e^{\rho}\left(1-e^{-\alpha_1}\right)\left(1-e^{-\alpha_2}\right)\left(1-e^{-\alpha_1-\alpha_2}\right)\\
	&=e^{\rho}\left(1-e^{-2\omega_1+\omega_2}\right)\left(1-e^{\omega_1-2\omega_2}\right)\left(1-e^{-\omega_1-\omega_2}\right).
\end{align*}

Now we give an important theorem that allows us to understand the images of  $\beta_d$ under the map $f_k$. This result is analogous to the functional equation in \eqref{eq:QA1(alpha)}. 
\begin{theorem}\label{thm:fkbeta}
If the real number $u$ is not congruent to $0$ or $1/2$ modulo $\Z$, then the diagonal restriction $f_k$ of $Q^k_{A_2}$ satisfies the following functional equation
$$
f_k\left(e^{-2\pi iu}\frac{1-e^{2\pi i3u}}{1-e^{2\pi iu}} \right)=
e^{-2\pi iku}\frac{\left(1-e^{2\pi i(k+1)u}\right)\left(1-e^{2\pi i(k+2)u}\right)}{\left(1-e^{2\pi iu}\right)\left(1-e^{2\pi i2u}\right)}.
$$
\end{theorem}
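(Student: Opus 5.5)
The plan is to evaluate the Weyl character formula at the diagonal point $\gamma=u\alpha_1^\vee+u\alpha_2^\vee$. By definition $f_k(y_1(\gamma))=h_k(y_1(\gamma),y_2(\gamma))=\chi_{k\omega_1}(\gamma)$. Since $y_1(\gamma)=y_2(\gamma)$ on the diagonal, and \eqref{eq:diag} gives the argument on the left-hand side, it suffices to show
$$\chi_{k\omega_1}(\gamma)=\frac{J(e^{\rho+k\omega_1})(\gamma)}{J(e^{\rho})(\gamma)}$$
equals the stated right-hand side. Here we need the denominator to be nonzero at $\gamma$, and this is exactly where the hypothesis on $u$ enters.

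First compute the pairings. With $\gamma=u(\alpha_1^\vee+\alpha_2^\vee)$ we have $(\omega_1,\gamma)=(\omega_2,\gamma)=u$, so $e^{a\omega_1+b\omega_2}(\gamma)=e^{-2\pi i(a+b)u}$. Write $t=e^{-2\pi iu}$ for brevity.

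For the denominator, use the product formula for $J(e^\rho)$. The factors $1-e^{-\alpha_1}$ and $1-e^{-\alpha_2}$ evaluate to $1-t^{-1}$, since $\alpha_1=2\omega_1-\omega_2$ pairs to $u$. The factor $1-e^{-\alpha_1-\alpha_2}$ evaluates to $1-t^{-2}$, and $e^\rho$ evaluates to $t^2$. So
$$J(e^\rho)(\gamma)=t^2(1-t^{-1})^2(1-t^{-2}).$$

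For the numerator, plug $a=k+1$, $b=1$ into \eqref{eq:JforA2}. The six exponents have coordinate sums $-k-2+k+1=-1$, $1-(k+2)=-(k+1)$, $k+2$ (positive terms), and $k+2-1=k+1$, $-(k+1)+k+2=1$, $-1-(k+1)=-(k+2)$ (negative terms). Hence
$$J(e^{\rho+k\omega_1})(\gamma)=t^{-1}+t^{-(k+1)}+t^{k+2}-t^{k+1}-t-t^{-(k+2)}.$$
I expect this to factor as a product $-t^{-(k+2)}(1-t)(1-t^{k+1})(1-t^{k+2})$ up to a monomial. A check: the product expands to $1-t-t^{k+1}+t^{k+2}-t^{k+2}+t^{k+3}+t^{2k+3}-t^{2k+4}$. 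The middle terms cancel, leaving $1-t-t^{k+1}+t^{k+3}+t^{2k+3}-t^{2k+4}$. Multiplying by $-t^{-(k+2)}$ gives $-t^{-(k+2)}+t^{-(k+1)}+t^{-1}-t-t^{k+1}+t^{k+2}$, which matches. This factorization is the one step that needs real care.

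Dividing numerator by denominator, one factor $1-t$ cancels, and $1-t^{-2}$ combines with monomials into $1-t^2$. The result has the shape
$$\text{(monomial)}\cdot\frac{(1-t^{k+1})(1-t^{k+2})}{(1-t)(1-t^2)}.$$
To obtain the paper's normalization, in terms of $e^{2\pi iu}=t^{-1}$, I will rewrite each factor $1-t^m=-t^m(1-t^{-m})$ and collect the monomial prefactor. Equivalently, replacing $u$ by $-u$ is harmless, because $\chi_{k\omega_1}$ is real on the diagonal; the character values at $\gamma$ and $-\gamma$ are complex conjugates, and here they coincide. The prefactor should come out as $e^{-2\pi iku}$.

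The hypothesis $u\not\equiv 0,\tfrac12 \pmod{\Z}$ guarantees $1-e^{2\pi iu}\neq0$ and $1-e^{4\pi iu}\neq0$, so the denominator is nonzero and the division is legitimate. This completes the identity.

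The main obstacle is the bookkeeping of signs and monomial prefactors when passing between $t$ and $t^{-1}$. I will verify that bookkeeping with $k=1$, where the right side should reduce to $1+e^{2\pi iu}+e^{-2\pi iu}$, and with $k=2$ against $f_2=x^2-x$.
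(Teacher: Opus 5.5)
Your proof is correct, and it shares the paper's framework. Both evaluate $\chi_{k\omega_1}=J(e^{\rho+k\omega_1})/J(e^\rho)$ at $\gamma=u(\alpha_1^\vee+\alpha_2^\vee)$ and use the product formula for the denominator $J(e^\rho)$.

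The difference is in how the numerator is handled. The paper observes that $J(e^{(k+1)\omega_1+\omega_2})(\gamma)=J(e^{\rho})(\tilde\gamma)$, where $\tilde\gamma=(1+\tfrac{2k}{3})u\alpha_1^\vee+(1+\tfrac{k}{3})u\alpha_2^\vee$. This follows by reindexing the alternating sum by $w\mapsto w^{-1}$: since $A_2$ is simply laced, $\gamma$ corresponds to $u\rho$ and $\tilde\gamma$ to $u(\rho+k\omega_1)$. The product formula then applies to the numerator as well and gives one factor per positive root, with exponents $k+1$, $1$, $k+2$. You instead expand the six terms of \eqref{eq:JforA2} and confirm the factorization $-t^{-(k+2)}(1-t)(1-t^{k+1})(1-t^{k+2})$ by multiplying out.

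Your route is completely elementary and carries the computation to the end. It supplies the monomial prefactor and the nonvanishing of the denominator for $u\not\equiv 0,\tfrac12 \pmod{\Z}$, both of which the paper leaves implicit. Your $u\mapsto -u$ detour is unnecessary: the expression $t^{-k}(1-t^{k+1})(1-t^{k+2})/((1-t)(1-t^2))$ is already invariant under $t\mapsto t^{-1}$.

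The paper's route explains why the numerator factors at all, instead of guessing and checking the factorization. It also works for any highest weight (it is the principal specialization of the character). That is the kind of tool the closing remark's hoped-for extensions to other Lie algebras would need.
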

\begin{proof}
Recall that $y_i=z_i=J(e^{\omega_i+\rho})/J(e^\rho)$. Moreover 
\[Q^k_{A_2}(y_1,y_2) = \left( \frac{J(e^{k\omega_1+\rho})}{J(e^\rho)}, \frac{J(e^{k\omega_2+\rho})}{J(e^\rho)} \right). \]
The denominator, namely the element $J(e^\rho)$, has a natural product decomposition as described above. Now, we focus on the numerator of the first component. If $\gamma=u\alpha_1^\vee+u\alpha_2^\vee$, then we claim that
$$
J\left(e^{(k+1)\omega_1+\omega_2}\right)(\gamma)=J\left(e^{\rho}\right)\left(\left(1+\frac{2k}{3}\right)u\alpha_1^\vee+\left(1+\frac{k}{3}\right)u\alpha_2^\vee\right).
$$
This equality can be verified by direct comparison of the six terms on both sides. We outline this computation as follows. Recall that the Cartan matrix provides a connection between the fundamental weights and simple roots. More precisely, we have
\[ \begin{array}{rcl}	
	\alpha_1 &=& 2\omega_1 - \omega_2,\\
	\alpha_2 &=& -\omega_1 +2\omega_2,\end{array}
\quad \text{ and } \quad
\begin{array}{rcl}	
	\omega_1 &=& (2/3)\alpha_1 + (1/3)\alpha_2,\\
	\omega_2 &=& (1/3)\alpha_1 + (2/3)\alpha_2.\end{array} 	
\]
The product formula requires the use of positive simple roots, and we need to make a suitable conversion. Observe that
\[\begin{bmatrix}2&-1\\-1&2\end{bmatrix}^{-1}\cdot 
\begin{bmatrix}k+1\\1\end{bmatrix} =
\begin{bmatrix}1+2k/3\\1+k/3\end{bmatrix}.
\]
Recall that we have fixed $\gamma=u\alpha_1^\vee+u\alpha_2^\vee$. This computation provides a new vector
\[\tilde\gamma = \left(1+\frac{2k}{3}\right)u\alpha_1^\vee + \left(1+\frac{k}{3}\right)u\alpha_2^\vee\]
with the property that $J\left(e^{(k+1)\omega_1+\omega_2}\right)(\gamma)=J\left(e^{\omega_1+\omega_2}\right)\left(\tilde\gamma\right).$
\end{proof}

Now, we focus on some other special cases that are not described by Theorem~\ref{thm:A2output012}. Suppose that $u$ is not congruent to $0$ or $1/2$ modulo $\Z$. Using $\gamma =u\alpha_1^\vee +u\alpha_2^\vee$ and \eqref{eq:diag}, we see that $y_1(\gamma)$ is equal to $3$ and $-1$, respectively. The images of these elements under the polynomial $f_k$ is given by the following theorem. 

\begin{corollary}\label{cor:A2outputnoundary} Let $k\ge 0$ be an arbitrary integer. Then we have
	\begin{align*}
		f_k(3)&=\frac{(k+1)(k+2)}{2}\\
		f_k(-1)&=
		\begin{cases} 
			(k+2)/2 & \text{if } k \equiv 0\pmod2, \\
			-(k+1)/2 & \text{if } k \equiv 1\pmod2.
		\end{cases}		
	\end{align*}
\end{corollary}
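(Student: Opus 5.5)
The plan is to obtain both values from the functional equation of Theorem~\ref{thm:fkbeta} by a continuity argument. For real $u$ write $t=e^{2\pi iu}$. By \eqref{eq:diag}, the argument of $f_k$ in Theorem~\ref{thm:fkbeta} equals $1+t+t^{-1}$. As $u\to 0$ it tends to $3$, and as $u\to 1/2$ it tends to $1-1-1=-1$. Since $f_k$ is a polynomial, it is continuous, so $f_k(3)$ and $f_k(-1)$ are the limits of the right-hand side
\[
R_k(t)=t^{-k}\,\frac{(1-t^{k+1})(1-t^{k+2})}{(1-t)(1-t^{2})}
\]
as $t\to 1$ and $t\to -1$ along the unit circle. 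Theorem~\ref{thm:fkbeta} applies for all $u$ near, but not equal to, $0$ and $1/2$, so these limits are legitimate. Equivalently, $R_k(t)$ is a Laurent polynomial in $t$ (a product of geometric sums), and one may simply evaluate it at $t=\pm1$.

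For $t\to 1$, I would split $R_k$ as
\[
t^{-k}\cdot\frac{1-t^{k+1}}{1-t}\cdot\frac{1-t^{k+2}}{1-t^2}.
\]
The middle factor is $1+t+\dots+t^{k}$, which tends to $k+1$. The last factor tends to $(k+2)/2$, for instance by l'H\^opital or by writing it as $\frac{1-t^{k+2}}{1-t}\cdot\frac{1}{1+t}$. Together these give $f_k(3)=(k+1)(k+2)/2$.

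For $t\to -1$, the prefactor $t^{-k}$ tends to $(-1)^k$, and the factor $1-t$ tends to $2$. The factor $1-t^2$ vanishes, so I pair it with whichever of $k+1$, $k+2$ is even.
\begin{itemize}
\item If $k$ is even, $\frac{1-t^{k+2}}{1-t^2}=1+t^2+\dots+t^{k}\to (k+2)/2$, and $\frac{1-t^{k+1}}{1-t}\to \frac{1-(-1)}{2}=1$. The sign is $+1$, giving $(k+2)/2$.
\item If $k$ is odd, $\frac{1-t^{k+1}}{1-t^2}\to (k+1)/2$ and $\frac{1-t^{k+2}}{1-t}\to 1$. The sign is $(-1)^k=-1$, giving $-(k+1)/2$.
\end{itemize}

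The only delicate point is justifying the passage to the limit, since Theorem~\ref{thm:fkbeta} excludes exactly these $u$. This is handled by continuity of $f_k$ together with the observation that $R_k$ extends to a Laurent polynomial.

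As a fallback, both formulas can be checked directly by induction from the recurrence $f_k=xf_{k-1}-xf_{k-2}+f_{k-3}$ of Definition~\ref{def:diag}.
\begin{itemize}
\item At $x=3$ the recurrence reads $f_k=3f_{k-1}-3f_{k-2}+f_{k-3}$. Its characteristic polynomial is $(X-1)^3$, so the solutions are exactly the quadratics in $k$. The quadratic $(k+1)(k+2)/2$ matches the initial values $1,3,6$.
\item At $x=-1$ the recurrence reads $f_k=-f_{k-1}+f_{k-2}+f_{k-3}$. The claimed formula gives $1,-1,2,-2,\dots$, which matches $f_0,f_1,f_2$ and is easily seen to satisfy the recurrence.
\end{itemize}
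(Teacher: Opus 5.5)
Your proposal is correct and is essentially the paper's argument: substitute $t=e^{2\pi iu}$ into Theorem~\ref{thm:fkbeta} and pair $1-t^2$ with whichever of $1-t^{k+1}$, $1-t^{k+2}$ has even exponent, so that the right-hand side becomes a product of geometric sums (a Laurent polynomial), then evaluate at $t=\pm1$. Your explicit justification of the passage to the limit and your independent check via the recurrence of Definition~\ref{def:diag} are more careful than the paper's sketch, whose geometric sums have misprinted upper limits, but the route is the same.
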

\begin{proof}
The values $u=0$, and $u=1/2$ correspond to $\beta_0=3$, and $\beta_2=-1$, respectively. We shall use the expressions of the previous theorem, but the cancellations must be performed before plugging in the values for $u$. In this respect, it is very similar to the computation of the boundary values as in the analogous case \eqref{eq:QA1functionaltrigo}.

To make the cancellations more transparent, we substitute $t=e^{2\pi i u}$ into the formula of Theorem~\ref{thm:fkbeta}, and obtain
$$f_k\left(t^{-1}\dfrac{1-t^3}{1-t}\right) = t^{-k}\frac{(1-t^{k+1})(1-t^{k+2})}{(1-t)(1-t^2)}.$$
The integers $k+1$ and $k+2$ come with different parity. If $k+1$ is even, then we have
$$\frac{1-t^{k+1}}{1-t^2}=1+t^2+\ldots+t^{(k+1)/2} \text{ and } \frac{1-t^{k+2}}{1-t}=1+t+\ldots+t^{k+2}.$$
Similar formulas occur if $k+2$ is even. If we put $u=0$, and therefore $t=1$, then we see that $f_k(3) = (k+1)(k+2)/2$. In the other case, we have $u=1/2$, and therefore $t=-1$. Doing a case-by-case analysis, we deduce the desired formula for $f_k(-1)$. 
\end{proof}

The values $f_k(-1)$ depend on the parity of $k$ and are not very convenient to express within more complicated formulas. We use the following formula for simplicity 
\[f_k(-1)=\frac{1+(-1)^k(2k+3)}{4}.\]
While correct, this formula masks the role played by the integers $k+1$ and $k+2$ in the values of $f_k(-1)$.

\begin{corollary}\label{cor:notperm}
Let $q$ be a power of a prime $p\ge 5$, and let $k>1$ be an integer with $k\equiv1,8\pmod{12}$. If $p\mid (k+1)(k+2)$, then $f_k:\F_q\rightarrow\F_q$ is not a permutation. 
\end{corollary}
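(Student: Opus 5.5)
The plan is to exhibit two distinct elements of $\F_q$ that $f_k$ sends to the same value. The natural candidates are $0$ and $3$: Theorem~\ref{thm:A2output012} controls $f_k(0)$, and Corollary~\ref{cor:A2outputnoundary} gives $f_k(3)$ in closed form.

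First, I would use the hypothesis $k\equiv 1,8\pmod{12}$. In both cases $k\equiv 1,2\pmod 3$, so Theorem~\ref{thm:A2output012} gives $f_k(0)=0$. This can also be read off the table in the proof of Corollary~\ref{cor:formernotperm}, where the columns $k\equiv 1$ and $k\equiv 8\pmod{12}$ both have $f_k(0)=0$. Since $f_k\in\Z[x]$, this integer identity reduces to $f_k(0)=0$ in $\F_q$.

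Second, I would use Corollary~\ref{cor:A2outputnoundary}, which gives the integer value $f_k(3)=(k+1)(k+2)/2$. Because $p\mid (k+1)(k+2)$ and $p$ is odd, $p$ divides the integer $(k+1)(k+2)/2$. Since $f_k$ has integer coefficients, evaluation commutes with reduction modulo $p$, so $f_k(3)=0$ in $\F_p\subseteq\F_q$.

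Finally, since $p\ge 5$, the elements $0$ and $3$ are distinct in $\F_q$, yet $f_k(0)=f_k(3)=0$. Hence $f_k$ is not injective on $\F_q$, and therefore not a permutation of $\F_q$. There is no real obstacle in this argument. The only points to check are that $(k+1)(k+2)/2$ is an integer, so the division by $2$ is harmless, and that $p\neq 3$, which is needed for $3\not\equiv 0$; both are guaranteed by the hypotheses.
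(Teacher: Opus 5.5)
Your proposal is correct and is the paper's own argument. You use $f_k(0)=0$ from Theorem~\ref{thm:A2output012}, which applies since $k\not\equiv 0\pmod 3$, and $f_k(3)=(k+1)(k+2)/2\equiv 0\pmod p$ from Corollary~\ref{cor:A2outputnoundary}; then $0\neq 3$ in $\F_q$ because $p\ge 5$. Your remarks on the harmless division by $2$ (valid since $p$ is odd) and on reduction modulo $p$ commuting with evaluation of $f_k\in\Z[x]$ fill in details the paper leaves implicit.
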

\begin{proof}
We have $f_k(0)=0$ by Theorem~\ref{thm:A2output012}, and $f_k(3)= 0$ by Corollary~\ref{cor:A2outputnoundary}. The elements $0$ and $3$ are distinct in $\F_q$ since  $p\ge 5$. 
\end{proof}

We make one final restriction by introducing a gcd-condition as in the previous section. This result will be helpful to simplify the proof of Theorem~\ref{thm:A2normQ}. 

\begin{theorem}\label{thm:notperm}
Let $q$ be a power of a prime $p \ge 5$ and let $k>1$ be an integer with $k\equiv1,8\pmod{12}$ and $p\nmid(k+1)(k+2)$. Then the inverse image of $\{0\}$ under the map $f_{k}:\F_{q}\to\F_{q}$ contains a single element if and only if the following gcd-condition holds
	$$
	\gcd\left(\frac{(q-1)(q+1)}{3}, \frac{(k+1)(k+2)}{6}\right)=1.
	$$
\end{theorem}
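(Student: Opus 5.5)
The plan is to count the zeros of $f_k$ on $\F_q$ through the correspondence \eqref{eq:one-to-one}, shifted by $1$. Since $x\mapsto x+1$ is a bijection of $\F_q$, every $a\in\F_q$ is the reduction modulo $\mf{p}$ of exactly one element $1+\alpha$ with $\alpha\in\Fix(P_{A_1}^q)=\mc{A}_q\cup\mc{B}_q$. Write $\alpha=t+t^{-1}$ with $t=e^{2\pi iu}$, where $u\in[0,1/2]$ and $u\in\frac{1}{q-1}\Z\cup\frac{1}{q+1}\Z$; then $1+\alpha=t^{-1}(1-t^3)/(1-t)$ is exactly the diagonal point of \eqref{eq:diag}. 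So it suffices to decide, for each such $u$, whether $f_k(1+\alpha)\equiv 0 \pmod{\mf{p}}$.

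\emph{Step 1: the boundary values.} For $u=0$ we have $1+\alpha=3$ and $f_k(3)=(k+1)(k+2)/2$. For $u=1/2$ we have $1+\alpha=-1$ and $f_k(-1)\in\{(k+2)/2,\,-(k+1)/2\}$, both by Corollary~\ref{cor:A2outputnoundary}. Since $p\nmid(k+1)(k+2)$ and $p\ge5$, neither value vanishes modulo $p$.

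\emph{Step 2: the generic values.} For $u\in(0,1/2)$, Theorem~\ref{thm:fkbeta} gives
\[ f_k(1+\alpha)=t^{-k}\frac{(1-t^{k+1})(1-t^{k+2})}{(1-t)(1-t^2)}. \]
The order of $t$ divides $q^2-1$, so it is prime to $p$, and $t^2\neq1$. Hence the denominator is a product of factors $1-\zeta$ with $\zeta\ne1$ a root of unity of order prime to $p$, and so it is a unit modulo $\mf{p}$. Arguing exactly as in the $A_1$ case after \eqref{eq:QA1(alpha)}, we get $1-t^{m}\equiv0\pmod{\mf{p}}$ if and only if $t^m=1$. Therefore the zeros of $f_k$ in $\F_q$ correspond bijectively to the $u\in(0,1/2)$ with $u\in\frac1{q-1}\Z\cup\frac1{q+1}\Z$ and $(k+1)u\in\Z$ or $(k+2)u\in\Z$. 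Equivalently, they correspond to the orders $d\ge3$ of $t$ (together with the admissible exponents) such that $d$ divides $q-1$ or $q+1$, and $d$ divides $k+1$ or $k+2$.

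\emph{Step 3: reduce the condition to the gcd.} This step is pure arithmetic and is the main obstacle. Since $p\ge5$ we have $3\mid q^2-1$. Moreover $k\equiv1,8\pmod{12}$ forces $3\mid(k+1)(k+2)$. So $u=1/3$ always gives a zero, namely the point $x=0$, in accordance with $f_k(0)=0$ from Theorem~\ref{thm:A2output012}.

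Checking residues modulo $12$: for $k\equiv1$ we have $k+1\equiv2$ and $k+2\equiv3$; for $k\equiv8$ we have $k+1\equiv9$ and $k+2\equiv10$. Hence the even one of $k+1,k+2$ is $\equiv2\pmod 4$ and is prime to $3$. Consequently no $d$ with $4\mid d$ or $6\mid d$ can occur. Any admissible $d\ne3$ with $d\ge3$ must therefore either be divisible by a prime $l\ge5$, or be divisible by $9$. Conversely, such a prime $l$, or $9$, dividing one of $q\pm1$ and one of $k+1,k+2$ yields $u=1/l$ or $u=1/9$ as an extra zero.

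Since $(k+1)(k+2)/6$ is odd, $\gcd\bigl((q^2-1)/3,(k+1)(k+2)/6\bigr)\ne1$ exactly when a prime $l\ge5$ divides both $q^2-1$ and $(k+1)(k+2)$, or when $9$ divides both. An odd $l$ dividing $q^2-1$ divides exactly one of $q\pm1$, and $l$ (or $9$) dividing $(k+1)(k+2)$ divides exactly one factor. Hence this is precisely the existence of an extra zero, which proves the equivalence.
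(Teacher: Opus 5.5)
Your proof is correct and follows essentially the same route as the paper. Zeros of $f_k$ in $\F_q$ are matched, through the shifted correspondence \eqref{eq:one-to-one} and Theorem~\ref{thm:fkbeta}, with denominators $d\ge 3$ that divide $q\pm1$ and also divide $k+1$ or $k+2$; these $d$ are then sorted into the cases $d=3$, $l\mid d$ for a prime $l\ge 5$, and $9\mid d$. Your Step~3 is tidier than the paper's: you exclude every $d$ divisible by $4$ or $6$ at once from the residues of $k+1,k+2$ modulo $12$, whereas the paper only asserts that $f_k(\beta_8)$ and $f_k(\beta_{12})$ are nonzero, and you also spell out the converse direction and the role of the odd cofactor $(k+1)(k+2)/6$ in the gcd.
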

\begin{proof}
Similar to the proof of Theorem~\ref{thm:invimageofzero}, we focus on the following sets
$$\mc{Z}_{k}^i := \left\{ 2\cos\left(2\pi \frac{d}{k+i}\right) \mid c\in \Z \right\}, \quad i=1,2,$$
guided by Theorem~\ref{thm:fkbeta}.

Since the values of $k$ are highly constrained, one can extract useful information about the inverse image of zero. For instance $f_k(0)=0$, $f_k(1)=1 \neq 0$, and $f_k(2)=2\neq 0$ by Theorem~\ref{thm:A2output012}. Moreover, $f_k(-1)\neq 0$ and $f_k(3)\neq 0$ by Corollary~\ref{cor:A2outputnoundary}. We have the following table of values
\[\begin{array}{|c|c|c|c|c|c|c|c|c|c|c|c|c|}\hline
	d & 1 & 6 & 4 & 3 & 2 \\ \hline
	u & 0 & 1/6 & 1/4 & 1/3 & 1/2 \\ \hline
	\alpha_d = 2\cos(2\pi u) & 2 & 1 & 0 & -1 & -2 \\ \hline
	\beta_d = 1+2\cos(2\pi u) & 3 & 2 & 1 & 0 & -1 \\ \hline
\end{array}\]

Recall the one-to-one correspondence \eqref{eq:one-to-one}. 
Under the hypotheses of the theorem, we shall show that gcd-condition of the theorem is equivalent to the following
$$(\mc{A}_{q} \cup \mc{B}_{q}) \cap (\mc{Z}_{k}^1 \cup \mc{Z}_{k}^2) = \{-1\}.$$
Suppose that this intersection contains an element $2\cos(2\pi u)$, other than $\alpha_3=-1$, where $u\in(0,1/2)$ is a unique rational number. If $u=a/b$ is written in its lowest terms, then the denominator $b$ cannot be $1,2,3,4$, or $6$ by the above table. We further write, for some unique integers $c$ and $d$, that
\[u =\frac{a}{b}=\frac{c}{q\pm1} = \frac{d}{k+i}, \quad i=1,2,\]
using the definitions of the sets within the above intersection. 

If $b$ is divisible by a prime $l \ge 5$, then the prime $l$ divides both $q\pm1$ and $k+i$ for some choice of a plus or minus sign and $i \in\{1,2\}$. It follows that the prime $l$ divides the expression $\gcd((q-1)(q+1),(k+1)(k+2))$. This implies that the gcd-condition of the theorem fails.  

It remains to consider the cases that $b$ is divisible by $8$, $9$, or  $12$ according to the above discussion. 

Suppose that $9 \mid b$. Then $\gcd((q-1)(q+1),(k+1)(k+2))$ is divisible by $9$. This implies that the gcd-condition of the theorem fails. Indeed, we have $f_k(\beta_9) =0$ if and only if $9 \mid k+1$ or $9 \mid k+2$. Moreover $\beta_9 \pmod{\mf{p}}$ is an element of $\F_q$ if and  only if $9 \mid q-1$ or $9 \mid q+1$. 

The other cases, namely $8\mid b$ and $12\mid b$, do not occur. Recall that $k\equiv1,8\pmod{12}$. It can be verified that $f_k(\beta_8)$ and $f_k(\beta_{12})$ are never zero modulo $\mf{p}$. 
\end{proof}

Now we shall focus on the norms of the elements $\beta_d=1 + \zeta_d + \zeta_d^{-1}$ and $f_k(\beta)$. We generalize the method in the previous section. Recall that $\Z^{+}$ is the set of positive integers, and $\mathbb{P}$ is its subset consisting of prime numbers. We start with the following theorem, which is analogous to Theorem~\ref{thm:A1norminput}. 

\begin{theorem}\label{thm:A2norminput}
The norm of $\beta_d$ is given by:
\[N_{\Q}^{\Q(\beta_d)}(\beta_d) = 
\begin{cases} 
	3 & \text{if } d = 1, \\
	0 & \text{if } d = 3, \\
	\pm l & \text{if } d = 3l^a \text{ for some }l\in\mathbb{P}, a\in\Z^{+},\\
	\pm 1 & \text{otherwise}.
\end{cases}\]
\end{theorem}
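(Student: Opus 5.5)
The plan is to mirror the proof of Theorem~\ref{thm:A1norminput}, replacing the identity for $\alpha_d$ by the analogous product expression for $\beta_d$. The cases $d=1$ and $d=3$ are immediate, since $\beta_1=1+2=3$ and $\beta_3=1+2\cos(2\pi/3)=0$. The case $d=2$ gives $\beta_2=-1$, which falls under ``otherwise'' with norm $-1$. For $d>3$ we will compute $N_{\Q}^{\cycl{d}}(\beta_d)$ and then recover the norm over $\Q(\beta_d)=\Q(\alpha_d)$ via \eqref{eq:norminextension}. Since $[\cycl{d}:\Q(\alpha_d)]=2$ for $d>2$, it suffices to prove
\[
N_{\Q}^{\cycl{d}}(\beta_d)=\begin{cases} l^2 & \text{if } d=3l^a,\\ 1 & \text{otherwise}.\end{cases}
\]

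The key identity, already implicit in \eqref{eq:diag}, is
\[
\beta_d=\zeta_d^{-1}+1+\zeta_d=\zeta_d^{-1}\,\frac{1-\zeta_d^3}{1-\zeta_d}.
\]
By multiplicativity of the norm, and since $\zeta_d^{-1}$ is a unit, we get
\[
N_{\Q}^{\cycl{d}}(\beta_d)=\pm\frac{N_{\Q}^{\cycl{d}}(1-\zeta_d^3)}{N_{\Q}^{\cycl{d}}(1-\zeta_d)}.
\]
The denominator equals $\Phi_d(1)$. By Proposition~\ref{prop:norm} this is $l$ if $d=l^b$ is a prime power and $1$ otherwise.

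For the numerator, we compute the norm of $1-\zeta_d^3$ inside its minimal cyclotomic field and raise it to the relative degree, splitting into cases.

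If $3\nmid d$, then $\zeta_d^3$ is again a primitive $d$-th root of unity. Hence $1-\zeta_d^3$ is a Galois conjugate of $1-\zeta_d$, the quotient has norm $1$, and $\beta_d$ is a unit.

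If $3\mid d$, write $d=3m$, so that $\zeta_d^3=\zeta_m$. The norm of $1-\zeta_m$ in $\cycl{m}$ is given by Proposition~\ref{prop:norm}, and we raise it to the power $[\cycl{3m}:\cycl{m}]$. This relative degree is $2$ if $3\nmid m$ and $3$ if $3\mid m$. The subcases are as follows.
\begin{itemize}
\item $d=3^a$ with $a\ge 2$: the numerator contributes $3^3$ and the denominator $3$, giving $3^2$.
\item $d=3l^a$ with $l\ne 3$ prime: the numerator contributes $l^2$. The denominator is $1$, because $d$ is not a prime power.
\item $d=3m$ with $m$ not a prime power: both numerator and denominator have norm $1$.
\end{itemize}
The case $d=3l^a$ with $l=3$ is exactly $d=3^{a+1}$ and yields $l^2=9$, so it is consistent with the stated formula.

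Taking square roots via \eqref{eq:norminextension} then gives $\pm l$ and $\pm 1$ respectively. The main obstacle is the bookkeeping in the $3$-power case, where the relative degree is $3$ rather than $2$ and the denominator itself is non-unit. That case must be checked carefully to confirm that the exponents combine to exactly $l^2$ with $l=3$.
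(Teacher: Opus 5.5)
Your proposal is correct and follows essentially the same route as the paper. Both use the identity $\beta_d=\zeta_d^{-1}(1-\zeta_d^3)/(1-\zeta_d)$, then apply Proposition~\ref{prop:norm} to the numerator and denominator with relative degree $2$ or $3$, split into the same cases ($3\nmid d$; $d=3^a$; $d=3l^a$ with $l\neq 3$; $d=3m$ with $m$ not a prime power), and finish with \eqref{eq:norminextension}. Your separate treatment of $d=2$ is slightly more careful than the paper: there $[\cycl{2}:\Q]=1$, so \eqref{eq:norminextension} does not apply, yet the paper silently includes $d=2$ among its ``remaining cases.''
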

\begin{proof}
The cases where $d$ equals 1 or 3 are trivial as $\beta_1=3$ and $\beta_3=0$. For the remaining cases, we shall prove
\[N_{\Q}^{\Q(\zeta_d)}(\beta_d) = 
\begin{cases} 
	l^2 & \text{if } d = 3l^a \text{ for some }l\in\mathbb{P}, a\in\Z^{+},\\
	1 & \text{otherwise}.
\end{cases}\]
in light of (\ref{eq:norminextension}). We note the identity 
$$\beta_d=1 + \zeta_d + \zeta_d^{-1}=\zeta_d^{-1}\left(\dfrac{1-\zeta_d^3}{1-\zeta_d}\right)$$ 
Using the multiplicativity of the norm and the fact that $\zeta_d^{-1}$ is a unit, we have the following:
$$N_{\Q}^{\cycl{d}}(\beta_d) = \pm \frac{N_{\Q}^{\cycl{d}} \left(1-\zeta_d^3\right)}{N_{\Q}^{\cycl{d}}\left(1-\zeta_d\right)}.$$
We want to utilize Proposition~\ref{prop:norm}. We analyze the numerator and denominator for each case of $d$.

If $3\nmid d$, then the numerator $1-\zeta_d^3$ and denominator $1-\zeta_d$ have the same norm by Proposition~\ref{prop:norm}. Hence, $N_{\Q}^{\cycl{d}}(\beta_d)=1$.
	
If $d=3m$ with $m>1$ not a prime power, then the numerator $1-\zeta_d^3=1-\zeta_m$ and the denominator $1-\zeta_d$ are units by Proposition~\ref{prop:norm}. Hence, $N_{\Q}^{\cycl{d}}(\beta_d)=1$.
	
If $d=3^a$ with $a>1$, the numerator is $1-\zeta_{3^a}^3 = 1-\zeta_{3^{a-1}}$ and the denominator is $1-\zeta_{3^a}$. By Proposition~\ref{prop:norm}, both have norm $3$ in the minimal fields containing them. Moreover, we have the following relative degree
$$[\cycl{3^a}:\cycl{3^{a-1}}] = \varphi(3^a)/\varphi(3^{a-1}) = 3^{a-1}/3^{a-2} = 3$$
Therefore, we have $N_{\Q}^{\cycl{d}}(\beta) = 3^3/3 = 3^2$ by using relative norms.
	
If $d=3l^a$ where $l\neq3$ is a prime and $a\geq1$, the numerator is $1-\zeta_{3l^a}^3 = 1-\zeta_{l^a}$ and the denominator is $1-\zeta_{3l^a}$. By Proposition~\ref{prop:norm}, the numerator has norm $l$ in the field $\cycl{l^a}$, and the denominator is a unit in the field $\cycl{3l^a}$. Moreover, we have the following relative degree
$$[\cycl{3l^a}:\cycl{l^a}] = \varphi(3l^a)/\varphi(l^a) = \varphi(3) = 2.$$
Therefore, we have $N_{\Q}^{\cycl{d}}(\beta) = l^2/1 = l^2$ by using relative norms.
\end{proof}

The next theorem follows the ideas introduced in the previous section, see Theorem~\ref{thm:A1normQ}. For simplicity, we consider only the remaining cases of $k$ that may possibly produce permutations of $\F_q$. A notable difference from the analogous result is that the gcd-condition depends on $k+2$ as well as $k+1$.

\begin{theorem}\label{thm:A2normQ}
Let $q$ be a power of a prime $p\ge 5$ and let $k>1$ be an integer with $k\equiv1,8\pmod{12}$. Suppose further that $p\nmid(k+1)(k+2)$ and
$$\gcd\left(\frac{(q-1)(q+1)}{3}, \frac{(k+1)(k+2)}{6}\right)=1.$$
If $d$ is a divisor of $q-1$ or $q+1$, then the norm of $f_k(\beta_d)$ is given by:
$$
N_{\Q}^{\Q(\beta_d)}\left(f_k(\beta_d)\right) = 
\begin{cases}
	\dfrac{(k+1)(k+2)}{2} & \text{if } d = 1, \\
	\dfrac{1+(-1)^k(2k+3)}{4} & \text{if } d = 2, \\
	0 & \text{if } d = 3, \\
	\pm l & \text{if } d = 3l^a \text{ for some }l\in\mathbb{P}, a\in\Z^{+},\\
	\pm 1 & \text{otherwise}.
\end{cases}
$$
\end{theorem}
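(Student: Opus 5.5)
The plan is to follow the proof of Theorem~\ref{thm:A1normQ} closely, with Theorem~\ref{thm:fkbeta} in place of the functional equation \eqref{eq:QA1functionaltrigo}.

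\textbf{Special values of $d$.} For $d=1$ and $d=2$ we have $\beta_1=3$ and $\beta_2=-1$, so the claimed values are exactly Corollary~\ref{cor:A2outputnoundary}. For $d=3$ we have $\beta_3=0$, and $f_k(0)=0$ by Theorem~\ref{thm:A2output012} since $k\equiv 1,8\pmod{12}$ forces $k\equiv1,2\pmod 3$. For the remaining $d$, by \eqref{eq:norminextension} (applied with $\beta_d$, which also generates the maximal real subfield when $d>2$) it suffices to show
\[
N_{\Q}^{\cycl{d}}\bigl(f_k(\beta_d)\bigr)=l^2 \text{ if } d=3l^a, \quad \text{and } 1 \text{ otherwise}.
\]

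\textbf{Reduction to cyclotomic norms.} Setting $t=\zeta_d$ in Theorem~\ref{thm:fkbeta} gives
\[
f_k(\beta_d)=\zeta_d^{-k}\frac{(1-\zeta_d^{k+1})(1-\zeta_d^{k+2})}{(1-\zeta_d)(1-\zeta_d^{2})}.
\]
Compare this with
\[
\beta_d=\zeta_d^{-1}\frac{1-\zeta_d^3}{1-\zeta_d},
\]
whose norm Theorem~\ref{thm:A2norminput} computes to be the same case list. The task is therefore to show that
\[
\frac{N(1-\zeta_d^{k+1})\,N(1-\zeta_d^{k+2})}{N(1-\zeta_d^{2})}=N(1-\zeta_d^3),
\]
with all norms in $\cycl{d}$. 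We evaluate each factor via Proposition~\ref{prop:norm} together with relative degrees. The relevant identity is $N_{\Q}^{\cycl{d}}(1-\zeta_d^{j})=N_{\Q}^{\cycl{d'}}(1-\zeta_{d'})^{\varphi(d)/\varphi(d')}$, where $d'=d/\gcd(d,j)$.

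\textbf{Using the gcd-condition.} Since $d\mid q\pm1$, the gcd-condition says that the only primes that can divide both $d$ and $k+i$ come from the factors $2$ and $3$.
\begin{itemize}
\item Because $k\equiv1,8\pmod{12}$, we have $(k+1)(k+2)\equiv 6 \text{ or } 90 \pmod{12\cdot}$. Concretely, $k+1,k+2$ are $\{2,3\}\pmod{12}$ or $\{9,10\}\pmod{12}$.
\item So among $k+1,k+2$, exactly one is $\equiv 2\pmod 4$ and one is divisible by $3$ but not by $9$, unless $9\mid(k+1)(k+2)$.
\item The case $9\mid(k+1)(k+2)$ is excluded when $9\mid d$ by the gcd-condition, since $9/6$ leaves a factor $3$ which would divide $(q^2-1)/3$ when $9\mid q\pm1$.
\end{itemize}
Thus $\gcd(d,k+1)\gcd(d,k+2)=\gcd(d,2)\gcd(d,3)$ in the appropriate sense. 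More precisely, $1-\zeta_d^{k+1}$ and $1-\zeta_d^{k+2}$ are Galois conjugates of $1-\zeta_d^{2}$ and $1-\zeta_d^{3}$ up to the matching of the $2$-part and the $3$-part. Exponents coprime to $d$ act as Galois automorphisms, so only $d/\gcd(d,j)$ matters.

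\textbf{Case analysis.} We then run through the cases as in Theorem~\ref{thm:A2norminput}: $d$ coprime to $6$; $d=2^a m$ and $d=3^a m$ mixtures; $d=3l^a$; and $d=3^a$. In each case we check that the exponents $k+1$ and $k+2$ reduce $\zeta_d$ to roots of unity of the same orders as $\zeta_d^2$ and $\zeta_d^3$, in some order. Then the ratio above collapses to $N(1-\zeta_d^3)/N(1-\zeta_d)$, and Theorem~\ref{thm:A2norminput} finishes.

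\textbf{Main obstacle.} The hard part is the bookkeeping at the primes $2$ and $3$. Here $d$ may be divisible by $4$, $8$ or $9$, while $k+i$ contains exactly one factor $2$ and one factor $3$. For $d$ divisible by $4$, the factor $1-\zeta_d^{k+i}$ with $2\,\|\,k+i$ must be matched against $1-\zeta_d^2$, and we need the order $d/2$ to agree. This works since $\gcd(d,k+i)_2=2$ whenever $2\mid d$.

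Similarly, the factor $3$ of $k+i$ must match $\zeta_d^3$ when $3\mid d$, which needs $9\nmid k+i$ or $9\nmid d$. If $9$ divides both, the gcd-condition fails, because $(q^2-1)/3$ is then divisible by $3$ and $(k+1)(k+2)/6$ is divisible by $3$. When $2\nmid d$ or $3\nmid d$, the corresponding factors have coprime exponents on both sides and match trivially.

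I expect this $2,3$-adic matching to be the delicate point. I would handle it by writing $d=2^{a}3^{b}m$ with $\gcd(m,6)=1$ and comparing the orders of $\zeta_d^{k+1},\zeta_d^{k+2}$ against those of $\zeta_d^{2},\zeta_d^{3}$ factor by factor.
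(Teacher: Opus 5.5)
Your proof is correct, and it is organized differently from the paper's. The paper repeats a case split on $d$: $3\nmid d$; $d=3m$ with $3\nmid m$ not a prime power; $d=3^a$; and $d=3l^a$ with $l\neq 3$. In each case it computes the norms of $1-\zeta_d$, $1-\zeta_d^2$, $1-\zeta_d^{k+1}$ and $1-\zeta_d^{k+2}$ directly, in parallel with the proof of Theorem~\ref{thm:A2norminput}.

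You instead prove one uniform statement. Let $k+i$ be the one of $k+1,k+2$ that is $\equiv 3,9 \pmod{12}$, and $k+j$ the one that is $\equiv 2,10 \pmod{12}$. Then $\gcd(d,k+i)=\gcd(d,3)$ and $\gcd(d,k+j)=\gcd(d,2)$. Primes $\ge 5$ are excluded by the gcd-condition, and $9$ dividing both $d$ and $k+i$ is excluded by its $3$-part. Hence $1-\zeta_d^{k+i}$ is conjugate to $1-\zeta_d^3$ and $1-\zeta_d^{k+j}$ is conjugate to $1-\zeta_d^2$. It follows that $N(f_k(\beta_d))=\pm N(\beta_d)$ for every $d\ge 4$, and Theorem~\ref{thm:A2norminput} then supplies the values.

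Your route avoids duplicating the casework, and it is also more complete. The paper's enumeration does not cover $d=3^b m$ with $b\ge 2$ and $m>1$ prime to $3$. For example, $d=18$ occurs for $k=85$, $q=19$, where all hypotheses hold. Your matching handles this automatically and gives $\pm 1$ there, as claimed.

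When you write it up, state the matching in exactly this form rather than as ``$\gcd(d,k+1)\gcd(d,k+2)=\gcd(d,2)\gcd(d,3)$ in the appropriate sense''. Also drop the garbled congruence modulo ``$12\cdot$''.
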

\begin{proof}
The cases where $d$ equals 1 or 2 are obtained by Corollary~\ref{cor:A2outputnoundary}. The case $d=3$ is obtained by Theorem~\ref{thm:A2output012}. For the remaining cases, we shall prove
\[N_{\Q}^{\Q(\zeta_d)}\left(f_k(\beta_d)\right) = 
	\begin{cases} 
		l^2 & \text{if } d = 3l^a \text{ for some }l\in\mathbb{P}, a\in\Z^{+},\\
		1 & \text{otherwise}.
	\end{cases}
	\]
in light of \eqref{eq:norminextension}. Using Theorem~\ref{thm:fkbeta}, we obtain the identity 
$$f_k(\beta_d)=\zeta_d^{-k}\dfrac{\left(1-\zeta_d^{k+1}\right)\left(1-\zeta_d^{k+2}\right)}{\left(1-\zeta_d\right)\left(1-\zeta_d^2\right)}$$
In order to compute the norm of this element, we want to utilize Proposition~\ref{prop:norm}. For this purpose, we consider various cases of $d$.

Suppose that $3\nmid d$. For some $i$ and $j$, with $\{i,j\}=\{1,2\}$, we assume that $k+i$ is odd and $k+j$ is even. Note that $k+j$ is not divisible by $4$ since $k\equiv1,8\pmod{12}$. The gcd-condition now implies that the following conjugacy relations hold: $$1-\zeta_d \sim 1-\zeta_d^{k+i}\quad \text{ and }\quad 1-\zeta_d^2 \sim 1-\zeta_d^{k+j}.$$ 
Hence, we conclude that $N_{\Q}^{\Q(\zeta_d)}\left(f_k(\beta_d)\right)=1$.

Suppose that $d=3m>3$ where $m$ is not divisible by three, and $m$ is not a prime power. The terms in the denominator, namely $1-\zeta_d$ and $1-\zeta_d^2$ are units by Proposition~\ref{prop:norm}. Now, let us focus on the terms in the numerator, namely $1-\zeta_d^{k+1}$ and $1-\zeta_d^{k+2}$. We claim that they are units as well. Assume otherwise, and suppose that  $1-\zeta_d^{k+i}$ is not a unit for some $i=1,2$. Then this means that at least two prime factors are canceled from $d=3m$. Recall that $d$ is a divisor of $q-1$ or $q+1$, and the gcd-condition puts a severe restriction on $k+1$ and $k+2$ values. We must have $6 \mid \gcd(d,k+i)$. However, $k+i$, for some $i=1,2$, is not divisible by $6$ since $k\equiv1,8\pmod{12}$. Hence, we conclude that $N_{\Q}^{\Q(\zeta_d)}\left(f_k(\beta_d)\right)=1$.

Suppose that $d=3^a$ with $a \ge 2$. The integer $k$ is not divisible by three since  $k\equiv1,8\pmod{12}$. For some $i$ and $j$, with $\{i,j\}=\{1,2\}$, we assume that $k+i$ is divisible by there and $k+j$ is not divisible by three. The gcd-condition implies that the element $1-\zeta_d^{k+i}$ is conjugate to $1-\zeta_d^3$. Note that both $1-\zeta_d$ and $1-\zeta_d^3 = 1-\zeta_{3^{a-1}}$ have norm $3$ in the minimal fields containing them by Proposition~\ref{prop:norm}. Moreover, we have the following relative degree
$$[\cycl{3^a}:\cycl{3^{a-1}}] = \varphi(3^a)/\varphi(3^{a-1}) = 3^{a-1}/3^{a-2} = 3$$
On the other hand, the elements
$1-\zeta_d^2$ and $1-\zeta_d^{k+j}$ are conjugates of each other, and their contribution to the norm computation is trivial. Therefore, we have $N_{\Q}^{\Q(\zeta_d)}\left(f_k(\beta_d)\right) = 3^3/3 = 3^2$ by using relative norms.

Finally, suppose that $d=3l^a$ where $l\neq3$ is a prime and $a\geq1$. The term $1-\zeta_d$ in the denominator is a unit by Proposition~\ref{prop:norm}. For some $i$ and $j$, with $\{i,j\}=\{1,2\}$, we assume that $k+i$ is divisible by there and $k+j$ is not divisible by three. Since $k\equiv1,8\pmod{12}$, the integer $k+i$ is odd and $k+j$ is even. We have $1-\zeta_d^{k+i} = 1-\zeta_{l^a}^{(k+i)/3}$. The gcd-condition implies that $1-\zeta_d^{k+i}$ is a conjugate of $1-\zeta_{l^a}$. Here, it is important that $k+i$ is odd. By Proposition~\ref{prop:norm}, the element $1-\zeta_{l^a}$  has norm $l$ in the field $\cycl{l^a}$. Moreover, we have the following relative degree
$$[\cycl{3l^a}:\cycl{l^a}] = \varphi(3l^a)/\varphi(l^a) = \varphi(3) = 2.$$
Recall that $k+j$ is not divisible by three. We observe that $k+j$ is even but not divisible by four. It follows that the elements $1-\zeta_d^2$ and $1-\zeta_d^{k+j}$ are conjugates of each other, and their contribution to the norm computation is trivial. We finally conclude that $N_{\Q}^{\Q(\zeta_d)}\left(f_k(\beta_d)\right) = l^2/1 = l^2$ by using relative norms.
\end{proof}

The following theorem is the last step before we prove our main result. To ease the notation, we define the following function
\begin{equation}\label{eq:tau}
	\tau(k)=\frac{(k+1)(k+2)\left(1+(-1)^k(2k+3)\right)}{24}.
\end{equation}
For a fixed value of $k$, the function $|\tau(k)|+1$ gives an upper bound on primes $p$ for which our main result, see Theorem~\ref{thm:A2main}, is inconclusive. 

\begin{theorem}\label{thm:A2outputprod}
Let $q$ be a power of a prime $p\ge 5$ and let $k>1$ be an integer with $k\equiv1,8\pmod{12}$. Suppose further that $p\nmid(k+1)(k+2)$ and
$$\gcd\left(\frac{(q-1)(q+1)}{3}, \frac{(k+1)(k+2)}{6}\right)=1.$$	
Let $\mc{F}=1+\Fix(P_{A_1}^q)$ be the set obtained by translating $\Fix(P_{A_1}^q)$ by one. Then we have
$$
\prod\limits_{\beta\in \mc{F} \setminus\{0\}} f_{k}(\beta)= \pm\tau(k) (q\pm1).
$$
Moreover, the plus or minus sign within the last term is uniquely determined by the condition $6|(q\pm1)$.
\end{theorem}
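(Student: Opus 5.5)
The plan is to mirror the proof of Theorem~\ref{thm:A1outputprod} and compare two products over the same index set. First I partition $\mc{F}=1+\Fix(P_{A_1}^q)$ into conjugacy classes. By \eqref{eq:one-to-one}, every element of $\Fix(P_{A_1}^q)$ is a Galois conjugate of some $\alpha_d$ with $d\mid q-1$ or $d\mid q+1$. Translating by one, every element of $\mc{F}$ is a conjugate of $\beta_d=1+\alpha_d$ for $d$ in the set $S$ of such divisors. The class of $\beta_3=0$ is the singleton $\{0\}$. This class lies in $\mc{F}$ exactly when $3$ divides $q-1$ or $q+1$, which always holds for $p\ge5$. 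Since $f_k$ has integer coefficients, it commutes with conjugation, so
\[
\prod_{\beta\in\mc{F}\setminus\{0\}} f_k(\beta)=\prod_{d\in S\setminus\{3\}} N_{\Q}^{\Q(\beta_d)}\bigl(f_k(\beta_d)\bigr),
\qquad
\prod_{\beta\in\mc{F}\setminus\{0\}}\beta=\prod_{d\in S\setminus\{3\}} N_{\Q}^{\Q(\beta_d)}(\beta_d).
\]

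Next I compare the two products factor by factor. I use Theorem~\ref{thm:A2norminput} for the norms of $\beta_d$, and Theorem~\ref{thm:A2normQ} for the norms of $f_k(\beta_d)$, which is where the hypotheses on $k$, $p$ and the gcd-condition enter. For $d=3l^a$ and for the unit cases, the two norms agree up to sign. The only discrepancies are at $d=1$ and $d=2$:
\begin{itemize}
\item at $d=1$, the factor $3$ is replaced by $(k+1)(k+2)/2$;
\item at $d=2$, the factor $N(\beta_2)=-1$ is replaced by $f_k(-1)=\bigl(1+(-1)^k(2k+3)\bigr)/4$.
\end{itemize}
This gives
\[
\prod_{\beta\in\mc{F}\setminus\{0\}} f_k(\beta)=\pm\frac{(k+1)(k+2)}{6}\cdot\frac{1+(-1)^k(2k+3)}{4}\prod_{\beta\in\mc{F}\setminus\{0\}}\beta=\pm\tau(k)\prod_{\beta\in\mc{F}\setminus\{0\}}\beta .
\]

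It remains to compute $\prod_{\beta\in\mc{F}\setminus\{0\}}\beta$. There are two routes.

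\emph{Direct route.} The product $\prod_{\alpha\in\Fix(P_{A_1}^q)}(x+1-\alpha)$ equals $P_{A_1}^q(x-1)-(x-1)$ up to sign. The desired product is then, up to sign, the coefficient of $x$ in $P_{A_1}^q(x-1)-(x-1)$, or equivalently the derivative at $x=0$, namely $(D_q)'(-1)-1$. This requires evaluating $D_q'(-1)=qE_{q-1}(-1,1)$ from $E_{q-1}(2\cos\theta)=\sin(q\theta)/\sin\theta$ at $\theta=2\pi/3$, where the value depends on $q \bmod 3$.

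\emph{Norm-counting route.} This follows Remark~\ref{rm:coef}. By Theorem~\ref{thm:A2norminput}, the nontrivial norms come from $d=1$ (giving $3$) and from $d=3l^a$ (giving $\pm l$). Exactly one of $q-1$ and $q+1$ is divisible by $3$; the other may contribute $d=1$ but no $d=3l^a$. Write $N=q\pm1$ for the one with $3\mid N$. The divisors $3l^a$ of $N$, with $l$ running over primes and $a\ge1$, contribute $\prod l$ with multiplicity equal to the exponent of $l$ in $N/3$. Together with the factor $3$ from $d=1$, this gives $\pm N$. Since $q$ is odd, $N$ is even, so $6\mid N$, which matches the stated sign rule.

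I expect this last step to be the main obstacle. The bookkeeping must be done carefully: check that each divisor $d$ is counted once even when it divides both $q-1$ and $q+1$ (as $d=1$ and $d=2$ do), that $\beta_2=-1$ contributes only a sign, and that the powers of $3$ recover exactly the $3$-part of $N$, including the factor $3$ from $\beta_1$. I will cross-check the answer against the direct derivative computation of $(D_q)'(-1)-1$ to confirm that it equals $\pm(q\pm1)$ with $6\mid q\pm1$.
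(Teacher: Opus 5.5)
Your proposal is correct and follows the paper's proof. It uses the same partition of $\mc{F}$ into classes $[\beta_d]$ and the same factor-by-factor comparison via Theorems~\ref{thm:A2norminput} and~\ref{thm:A2normQ} to extract $\pm\tau(k)$. It evaluates $\prod_{\beta\in\mc{F}\setminus\{0\}}\beta$ through the linear coefficient of $P_{A_1}^q(x-1)-(x-1)$, as the paper does, and your explicit value $D_q'(-1)=\pm q$ for $q\equiv\pm1 \pmod 3$ (or your divisor count over $d=1$ and $d=3l^a$) fixes the sign more transparently than the paper's appeal to Remark~\ref{rm:coef}.

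One small slip: the polynomial with root set $\mc{F}$ is $\prod_{\alpha}(x-1-\alpha)=P_{A_1}^q(x-1)-(x-1)$. By contrast, $\prod_{\alpha}(x+1-\alpha)=P_{A_1}^q(x+1)-(x+1)$ is related to it by $x\mapsto -x$, not merely by a sign. This does not affect the argument, because the coefficient you then extract is the correct one.
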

\begin{proof}
Any element of $\mc{F} = 1+\Fix(P_{A_1}^q)$ is a conjugate of $\beta_d=1+\alpha_d$ for some divisor $d$ of $q-1$ or $q+1$. Moreover, all conjugates of $\beta_d$ are included in the number field $\Q(\alpha_d) = \Q(\beta_d)$. This is a direct consequence of the one-to-one correspondence given by \eqref{eq:one-to-one}. 

Let $S$ be the subset of positive integers $1 \leq d \leq q+1$ such that $\beta_d \in 1+\Fix(P_{A_1}^q)$. The set $S$ consists of divisors of $q-1$ or $q+1$ by \eqref{eq:one-to-one}. We have the following partition induced by the conjugacy equivalence relation 
$$ \mc{F}=1+\Fix(P_{A_1}^q) = \bigcup_{d\in S} [\beta_d]$$
The equivalence class of $\beta_3=0$ has a single element since $\beta_3\in\Q$. Symbolically, we have $[\beta_3] = \{0\}$. It follows that 
$$\prod\limits_{\beta\in \mc{F}\setminus\{0\}}\beta = \prod\limits_{d\in S\setminus\{3\}}  N_{\Q}^{\Q(\beta_d)}\left(\beta_d\right).$$
Repeating a similar computation, we also obtain
$$ \prod\limits_{\beta\in\mc{F}\setminus\{0\}}  f_k(\beta)=\prod\limits_{d\in S\setminus\{3\}}  N_{\Q}^{\Q(\beta_d)}\left(f_k(\beta_d)\right).$$
The right-hand sides of the above products can be related to each other by using Theorem~\ref{thm:A2norminput}, and Theorem~\ref{thm:A2normQ} together. More precisely, we have 
$$\prod\limits_{d\in S\setminus\{3\}}  N_{\Q}^{\Q(\beta_d)}\left(f_k(\beta_d)\right)=\pm\tau(k) \prod\limits_{d\in S\setminus\{3\}} N_{\Q}^{\Q(\beta_d)}\left(\beta_d\right).$$
If $p\neq 3$, then we observe that $P_{A_1}^q(x-1)=x^q+\ldots \pm q x-1$. 
The product of nonzero elements of $\mc{F}$ is equal to $\pm q\pm1$. Obviously, the term $q \pm 1 $ is  divisible by two. The divisibility by three follows from Remark~\ref{rm:coef} together with Theorem~\ref{thm:A2norminput}.
\end{proof}

We are ready now ready to exhibit a proof for the main result of this paper.

\begin{proof}[Proof of Theorem~\ref{thm:A2main}]
Let $q$ be a power of a prime $p\ge 5$ and let $k>1$ be an integer. If $k \not \equiv 1,8\pmod{12}$, then the map $f_k:\F_q\rightarrow\F_q$ is not a permutation by Corollary~\ref{cor:formernotperm}. If $k \equiv 1,8\pmod{12}$ but $p \mid (k+1)(k+1)$, then the map $f_k:\F_q\rightarrow\F_q$ is not a permutation by Corollary~\ref{cor:notperm}. Suppose otherwise, and consider the gcd-condition of Theorem~\ref{thm:notperm} given below
$$
\gcd\left(\frac{(q-1)(q+1)}{3}, \frac{(k+1)(k+2)}{6}\right)=1.
$$
If this gcd-condition does not hold, then the map $f_k:\F_q \rightarrow \F_q$ is not a permutation since the inverse image of $\{0\}$ contains more than one element. 

It remains to consider the case where $k\equiv 1,8 \pmod{12}$, $p \nmid (k+1)(k+1)$, and the gcd-condition above holds. Under these hypotheses, we can use Theorem~\ref{thm:A2outputprod}. We have
$$
\prod\limits_{\beta\in \mc{F}\setminus\{0\}} f_{k}(\beta)= \pm\tau(k) (q\pm1).
$$
This computation gives the $x$-coefficient of the following polynomial
$$\prod\limits_{\beta\in\F_q}\left(x-f_k(\beta)\right)$$
up to a plus or a minus sign. Suppose that $p>|\tau(k)|+1$. Finally, we see that 
$$\prod\limits_{\beta\in\F_q} \left(x-f_k(\beta) \right)\not\equiv x^q-x\pmod p.$$
Thus, the map $f_k:\F_q\rightarrow\F_q$ is not a permutation. We also conclude that the map $Q_{A_2}^k:\F_q^2\rightarrow\F_q^2$ is not a permutation by following Remark~\ref{rem:diag}.

For a fixed value of $k$, there are only finitely many primes $p$ such that the inequality $p \leq |\tau(k)|+1$ holds. We conclude that $Q_{A_2}^k$ is not exceptional for $k>1$.
\end{proof}

\begin{remark} We hope that the ideas of this work can be further generalized to other second kind generalized Chebyshev polynomials associated with arbitrary semi-simple Lie algebras. After completing the study of $A_1$ and $A_2$, one may naturally turn to the next simplest cases, namely $B_2$ and $G_2$. However, there is an immediate difficulty. In those cases, the property \eqref{eq:QA2symmetry} is no longer true. In other words, at first glance, restricting the functions $Q_{B_2}^k$ or $Q_{G_2}^k$ to the diagonal of $\F_q^2$ does not appear to be a useful approach.
\end{remark}

\section{Acknowledgement}
The authors gratefully acknowledge the financial support provided by the Scientific and Technological Research Council of Türkiye (TÜBİTAK) through Project No. 124F146.

\bibliographystyle{abbrv}
\bibliography{references}

\end{document}